\documentclass[preprint,10.5pt]{elsarticle}
\usepackage{graphicx} 

\usepackage[utf8]{inputenc}
\usepackage[english]{babel}
\usepackage{graphicx}
\usepackage{amssymb}
\usepackage{amsmath}
\usepackage{enumerate}
\usepackage{tikz}
\usetikzlibrary{decorations.pathreplacing} 
\usepackage{commath}
\usepackage{upgreek}
\usepackage[hidelinks]{hyperref}
\usepackage{color}
\usepackage{wrapfig}
\usepackage{graphicx,wrapfig,lipsum}

\usepackage{amsthm}
\newtheorem{theorem}{Theorem}[section]

\newtheorem{lemma}[theorem]{Lemma}
\newtheorem{remark}{Remark}[section]

\DeclareMathOperator{\tr}{tr}

\date{\vspace{-5ex}}

\usepackage[a4paper, total={6.5 in, 9.5 in}]{geometry}

\journal{Physica D: Nonlinear Phenomena}

\begin{document}

\begin{frontmatter}

\title{Transient instability and recurrent aggregation in a two--species chemotaxis--competition system under environmental periodicity}





\author[ucm-imi,icai]{Federico Herrero-Hervás \corref{cor1}}
\ead{fedher01@ucm.es}
\author[ucm]{Pedro Hernández-Alonso}
\author[ucm-imi]{Mihaela Negreanu}
\cortext[cor1]{Corresponding author}

\affiliation[ucm-imi]{organization={Instituto de Matemática Interdisciplinar, Departamento de Análisis Matemático y Matemática Aplicada, Universidad Complutense de Madrid},
            city={Madrid},
            postcode={28040},
            country={Spain}}

\affiliation[ucm]{organization={Departamento de Análisis Matemático y Matemática Aplicada, Universidad Complutense de Madrid},
    city={Madrid},
    postcode={28040},
    country={Spain}}
\affiliation[icai]{organization={Departamento de Matemática Aplicada, ICAI, Universidad Pontificia de Comillas},
            city={Madrid},
            postcode={28015},
            country={Spain}}

\begin{abstract}
This work investigates recurrent pattern formation in a two--species chemotaxis system with competitive dynamics under the assumption that the carrying capacities of both species are periodic in time. By considering a globally attracting periodic solution to the associated nonautonomous ODE system as a spatially homogeneous base state, we derive sufficient conditions for the existence of a critical chemotactic sensitivity of the first species, denoted by $\chi_{\text{crit}}$. Whenever this threshold is exceeded, the periodic state becomes linearly unstable over a nonempty set of times within each environmental cycle. This gives rise to a recurrent alternation of unstable and stable intervals, during which spatial perturbations are respectively amplified and damped. The analytical derivation is complemented with numerical simulations, in both periodic coexistence and competitive exclusion regimes. Finally, we discuss the temporal location of the instability sets and the possibility of negative instability thresholds.

\end{abstract}

\begin{keyword}
Chemotaxis, ~Pattern Formation,~ Periodicity,~ Turing Instability, ~ Nonautonomous dynamics
\end{keyword}
\end{frontmatter}

\section{Introduction}
Periodic dynamics arise in a wide range of natural systems, particularly in ecological communities involving interacting species. Such periodic behavior has been documented in different phenomena ranging from recurrent forest insect outbreaks \cite{Alps} to cyclic predator--prey dynamics, including the classical lynx-hare interactions \cite{EN42}, arctic rodent populations \cite{GHS03}, and different krill species in marine environments \cite{RQNJOM14, HDE03}. In predator--prey systems, such periodic cycles constitute one of the main features of the classical Lotka--Volterra system, arising in the absence of self-limiting growth effects.
\\\\
Nevertheless, empirical observation of sustained ecological cycles remains mostly limited, since the available data sets typically span only a few number of oscillations due to the long characteristic timescales involved, which may range from several months to decades. Moreover, in laboratory studies, carried out with fast-reproducing species under controlled constant conditions, sustained periodic oscillations seem particularly difficult to reproduce experimentally. In most cases, the results eventually show either stabilization toward a steady state or extinction of one of the species \cite{L73, U57, V79, FEHS00}. From a mathematical perspective, this is consistent with the dynamics of Lotka--Volterra systems incorporating logistic growth terms, with saturation mechanisms effectively suppressing oscillating regimes.
\\\\
This suggests that persistent population cycles (which have only recently been reproduced experimentally under controlled constant conditions \cite{BRWGF20, H20}) are either consequence of more complex mechanisms such as intrinsic delays or density-dependent interactions (see for instance \cite{GKT97} and the references therein), or on the contrary, arise from environmental heterogeneity. In the latter case, particularly for environments with strong seasonality effects, substantially more examples have been documented. For instance, it is well known that seasonal changes in temperature, light intensity, salinity and nutrient availability significantly affect plankton populations \cite{S53}. In particular, this often leads to phytoplankton blooms, i.e. rapid increases in their population levels \cite{SC03}. In turn, blooms of harmful phytoplankton species result in a high release of toxins, consequently inducing mass mortality of numerous fish species \cite{B95}. In such cases, time-periodic dynamics emerge as a consequence of environmental seasonality rather than autonomous species interactions.
\\\\
Similar considerations arise in systems involving competing species, where environmental periodicity may substantially alter the dynamics of the autonomous case, typically balanced between coexistence and competitive exclusion. Namely, for two competing populations with densities $u=u(t)$ and $v= v(t)$, the dynamics of the system
\begin{equation}\label{0.1}
\begin{cases}
   \displaystyle \frac{du}{dt}  = \mu_1 u (1-u-a_1v), & t>0, \\[1.75 ex]
   \displaystyle \frac{dv}{dt}  = \mu_2 v(1-v-a_2u), & t>0,
\end{cases}
\end{equation}
are well-understood (see for instance \cite{Mur}), where $\mu_1, ~\mu_2 >0$ respectively denote the logistic growth rates of species $u$ and $v$, and $a_1, ~a_2 \geq 0$ their interspecific competition coefficients. In particular, if competition is weak, that is, $a_1, a_2 \in [0,1)$, then the coexistence equilibrium 
\begin{equation}\label{0.2}    
(u_*, v_*) :=\left(\frac{1-a_1}{1-a_1a_2}, \frac{1-a_2}{1-a_1a_2}\right),
\end{equation}
is positive and forms a stable node, which attracts all trajectories with arbitrary positive initial data, while the remaining equilibria, $(0,0)$, $(1,0)$ and $(0,1)$ are all unstable. Thus, in this situation, species coexist and their populations densities converge toward the stable equilibrium, without exhibiting sustained oscillatory behavior. If one or both interspecific competition coefficients exceeds $1$, a competitive exclusion result is achieved, with one species dominating the other, which eventually becomes extinct.
\\\\
In contrast, in environments with seasonal variability, the analysis of the following nonautonomous system
\begin{equation}\label{0.3}
\begin{cases}
   \displaystyle \frac{du}{dt}  = u\big(r_1(t)- k_1(t)u-a_1(t) v\big), & t>0, \\[1.75 ex]
   \displaystyle \frac{dv}{dt}  =  v\big(r_2(t)-k_2(t)v-a_2(t)u\big), & t>0,
\end{cases}
\end{equation}
reveals substantial qualitative differences, when coefficients $a_i(t), r_i(t), k_i(t)$, $i \in \{1,2\}$, are assumed to be continuous, positive and periodic with common period $T>0$. Under suitable conditions on the coefficients the existence of a unique positive $T-$periodic solution to system \eqref{0.3} can be proved \cite{AL86}, which moreover attracts any other solution with positive initial data. Under such assumptions, regardless of their initial values, both populations converge to a globally attracting positive periodic coexistence state, whose oscillatory behavior is induced by the cyclic environmental conditions. Alternative hypotheses on the coefficients grant the extinction of one of the species, while the other one persists with $T-$periodic oscillations \cite{A93, TC99}.
\\\\
When spatial dynamics are incorporated into the model, the same distinction between persisting oscillations and convergence to a steady state is preserved when comparing constant environmental conditions with seasonality effects. Namely, let $\Omega \subset \mathbb{R}^n$, $n \geq 1$ be a bounded domain with smooth boundary. Then, for the autonomous parabolic system
\begin{equation}\label{0.6}
    \begin{cases}
        \displaystyle \frac{\partial u}{\partial t} = d_1 \Delta u + \mu_1 u (1-u-a_1v), & x \in \Omega,~ t>0, \\[1.75 ex] 
        \displaystyle \frac{\partial v}{\partial t} = d_2 \Delta v + \mu_2 v (1-v-a_2u), & x \in \Omega,~ t>0,
    \end{cases}
\end{equation}
with diffusion coefficients $d_1, d_2>0$, under homogeneous Neumann boundary conditions, the Lyapunov functional introduced by Goh in \cite{G77} for the multiple-species generalization of the ODE system \eqref{0.1} can be easily adapted to the PDE system \eqref{0.6} \cite{A79, H05}. As a result, regardless of how fast both species diffuse, their long-time behavior is determined by that of the ODE counterpart. In particular, this implies that for weakly competing species, there is a stable coexistence, with all nontrivial nonnegative solutions converging to the coexistence state \eqref{0.2} uniformly in $\Omega$. Similarly, competitive exclusion results hold if one of the coefficients $a_i>1$, leading to the extinction of one of the species \cite{P81, ZP82}. In this way, consistently with the dynamics of the underlying ODE system \eqref{0.1}, the autonomous reaction-diffusion system \eqref{0.6} cannot sustain persistent oscillatory dynamics.
\\\\
In the nonautonomous case, where diffusion is incorporated to system \eqref{0.3}, solutions are marked again by oscillatory dynamics, as a result of the seasonality of the environment. In particular, in \cite{AL89}, the authors showed that the sufficient conditions obtained in \cite{AL86} and in \cite{A93} for the ODE system \eqref{0.3} still determine periodic coexistence or competitive exclusion on its diffusive counterpart, independently of the diffusion coefficients. A further attracting property of the periodic orbit in the case of coexistence was determined in \cite{Rocky}.
\\\\
Therefore, seasonality effects in the environmental conditions ultimately induce cyclic oscillations in the populations ---both in diffusive and nondiffusive settings--- which may arise either through periodic coexistence or through extinction of one species accompanied by periodic persistence of the other. In all these situations, however, the long-time dynamics remain spatially homogeneous and are governed by the balance between the coefficients 
$a_i(t)$, $r_i(t)$ and $k_i(t)$, $i \in \{1,2\}$, which respectively represent the proliferation, self-limitation and competition rates of the species. In particular, for these cases, diffusion preserves the qualitative dynamics of the underlying ODE systems, indicating that spatial spreading alone does not modify the reaction-driven behavior of the species.
\\\\
This motivates the consideration of further migration mechanisms, to investigate whether the attracting states of the previous systems can be destabilized and give rise to spatially heterogeneous oscillatory structures. In particular, incorporating the effect of chemotaxis, i.e. directed migration in response to a concentration gradient, has revealed very rich dynamics in a broad range of systems (see among others \cite{PH11,JWZ16, LSW13}). Moreover, periodicity has been detected in some chemotactically-driven phenomena, such as the movement of \textit{Dictyostelium discoideum} amoebas toward their center of aggregation \cite{SHM91} or in human neutrophils \cite{DZ93}.
\\\\
\textbf{Chemotaxis systems with competing species}
\\\\
To study the effect of a self-generated chemoattractant signal on the behavior of two competing species, Tello and Winkler \cite{TW12} proposed the following Keller--Segel type system,
\begin{equation}\label{0.7}
    \begin{cases}
        \displaystyle \frac{\partial u}{\partial t} = d_1 \Delta u - \chi_1 \nabla \cdot (u \nabla w) + \mu_1 u (1-u-a_1v), & x \in \Omega,~ t>0, \\[1.75 ex] 
        \displaystyle \frac{\partial v}{\partial t} = d_2 \Delta v - \chi_2 \nabla \cdot (v \nabla w) + \mu_2 v (1-v-a_2u), & x \in \Omega,~ t>0, \\[1.75 ex] 
        \displaystyle \tau \frac{\partial w}{\partial t}= \Delta w + \alpha u + \beta v - w , & x \in \Omega,~ t>0, 
    \end{cases}
\end{equation}
under homogeneous Neumann boundary conditions and nonnegative initial data, where $u = u(x,t)$ and $v = v(x,t)$ again represent the population densities of the two competing species, while $w = w(x,t)$ describes the signal concentration. Parameters $\chi_1, \chi_2 >0$ respectively denote the chemotactic sensitivity of each species and $\alpha, \beta >0$ their self-production rate of the substance. Lastly, $\tau \geq 0$ measures the diffusivity of the signal. For $\tau =0$, that is, under a fast diffusion assumption, results in \cite{TW12, STW14} obtained convergence to the corresponding spatially homogeneous steady state under suitable hypotheses in the cases $0 \leq a_i < 1$, $i \in \{1,2\}$ leading to coexistence, and for $0 \leq a_2<1<a_1$ resulting in a competitive exclusion, with $u$ becoming extinct, and $v$ proliferating toward its normalized carrying capacity $1$. To obtain these results, the logistic growth rates $\mu_1$ have to be sufficiently large with respect to the chemotactic sensitivities $\chi_1, \chi_2$, (see also \cite{BLM} for more refined conditions than those in \cite{TW12}) ultimately implying that the possible aggregations due to chemotactic effects are effectively overcome by the reaction terms. The work by Bai and Winkler \cite{BW16} extends these results under similar assumptions to the fully parabolic case, with $\tau>0$, while and Negreanu and Vargas \cite{NV21} considered the nonautonomous setting, similarly yielding convergence of solutions to the underlying ODE reaction dynamics uniformly in space.
\\\\
In contrast, when the dynamics are not governed by large enough values of the logistic growth rates $\mu_i$, different phenomena have been reported to arise. For instance, in absence of kinetic terms, i.e. $\mu_1 = \mu_2 = 0$, finite-time blow-up of solutions to system \eqref{0.7} has been reported for both $\tau = 0$ \cite{BEG13, BG12, ESV09} and $\tau>0$ \cite{LL14}. For positive values of $\mu_i$, when chemotactic sensitivities are allowed to be sufficiently large, different results have characterized Turing instabilities \cite{WYZ17} and the existence of nonconstant steady states \cite{WZYH15}. In particular, when crossing a Hopf bifurcation, periodically heterogeneous solutions could be constructed. 
\\\\
The main purpose of this work is to extend these Turing instability results to the nonautonomous case, considering a system of the form
\begin{equation}\label{0.8}
    \begin{cases}
        \displaystyle \frac{\partial u}{\partial t} = d_1 \Delta u - \chi_1 \nabla \cdot (u \nabla w) + \mu_1 u (1+f_1(t)-u-a_1v), & x \in \Omega,~ t>0, \\[1.75 ex] 
        \displaystyle \frac{\partial v}{\partial t} = d_2 \Delta v - \chi_2 \nabla \cdot (v \nabla w) + \mu_2 v (1+f_2(t)-v-a_2u), & x \in \Omega,~ t>0, \\[1.75 ex] 
        \displaystyle  ~ 0 = \Delta w + \alpha u + \beta v - w , & x \in \Omega, ~t>0, \\[1.75 ex] 
       \displaystyle \frac{\partial u}{\partial \nu} = \frac{\partial v}{\partial \nu} = \frac{\partial w}{\partial \nu} = 0, & x \in \partial \Omega, ~t>0,\\[1.75 ex] 
       u(x,0) =  u_0(x) \geq 0, \quad v(x,0) = v_0(x) \geq 0, & x \in \Omega,
    \end{cases}
\end{equation}
as introduced in \cite{NV21}, where $\nu$ denotes the outward normal vector to $\partial \Omega$. Here $f_1(t), f_2(t)$ are two continuous and bounded $T$-periodic functions satisfying 
\begin{equation}\label{0-hip1}
    1+f_i(t) >0 \quad \text{for all } t >0, ~i \in \{1,2\}.
\end{equation} 
In this way, at each time $t>0$, $1+f_1(t)$ and $1+f_2(t)$ represent the normalized carrying capacity of each species, reflecting the environmental seasonality, which is assumed not to influence neither the logistic growth rates $\mu_i$ nor the competition rates $a_i$, which are considered constant. Moreover, without loss of generality, we may assume that after rescaling the variables, $d_1 = d_2 = 1$.
\\\\
As above mentioned, under suitably small chemotactic sensitivities, namely
\begin{equation}\label{0-nohip}
    \mu_1 > a_2 \mu_2 + 2 \alpha \left( |\chi_1|+|\chi_2| \right), \quad \mu_2 > a_1 \mu_1 + 2 \beta \left( |\chi_1|+|\chi_2| \right),
\end{equation}
for weakly competing species (i.e $a_1, a_2 \in [0,1)$), the dynamics of system \eqref{0.8} are known \cite{NV21} to satisfy
\begin{equation}\label{0-conv}
    \|u(\cdot,t) - \tilde{u}(t)\|_{L^\infty(\Omega)} + \|v(\cdot,t) - \tilde{v}(t)\|_{L^\infty(\Omega)} \to 0 \quad \text{as } t \to \infty,
\end{equation}
where $(\tilde{u},\tilde{v})$ is the unique solution to the ODE system
\begin{equation}\label{0.9}
    \begin{cases}
        \displaystyle \frac{d \tilde{u}}{dt} = \mu_1 \tilde{u}(1+f_1(t) - \tilde{u} - a_1 \tilde{v}), & t>0 \\[1.75 ex]
        \displaystyle \frac{d \tilde{v}}{dt}  = \mu_2 \tilde{v}(1+f_2(t) - \tilde{v} - a_2 \tilde{u}), & t>0,
    \end{cases}
\end{equation}
with initial values
\begin{equation}\label{0.10}
 \tilde{u}(0) = \frac{1}{|\Omega|} \int_\Omega u_0(x) ~dx, \quad \tilde{v}(0) = \frac{1}{|\Omega|} \int_\Omega v_0(x) ~dx.
\end{equation}
However, to the best of our knowledge, the effect of high values of $\chi_i$, where the state $(\tilde{u},\tilde{v})$ could become unstable for system \eqref{0.8}, remains unstudied, as well as other scenarios where one or both of the competition rates exceeds 1.
\\\\
\textbf{Main results}
\\\\
Throughout this work, we assume that the ODE system \eqref{0.9} possesses a unique nonnegative globally attracting $T-$periodic solution. Since every solution of \eqref{0.9} converges to this periodic orbit, the limit in \eqref{0-conv} is independent of the initial values \eqref{0.10}. Accordingly, throughout the remainder of the paper we identify $(\tilde{u}, \tilde{v})$ with this unique $T-$periodic solution.
\\\\
With this globally attracting periodic solution to the ODE system \eqref{0.9} as the reference state, our main objective is to characterize its transient linear instability within the full PDE system \eqref{0.8}. To this end, we fix all parameters except for $\chi_1$, which we consider as the bifurcation parameter, with the goal of obtaining recurrent unstable dynamics marked by the periodicity of $f_1$ and $f_2$. In particular, we identify time intervals in which $(\tilde{u},\tilde{v})$ becomes linearly unstable, where perturbations are amplified due to the strong taxis effect, followed by stable intervals, where these perturbations decay, and population levels are brought back to $(\tilde{u},\tilde{v})$. Over time, this mechanism gives rise to recurrent aggregation patterns synchronized with the environmental periodicity.
\\\\
This contrasts with classical Turing instabilities in autonomous systems, where unstable modes remain permanently unstable once the instability threshold is crossed, or with transient growth phenomena in the autonomous Keller-Segel system with logistic growth in which, if diffusion is small enough, during an intermediate transient phase, population levels can achieve arbitrarily high values, exceeding the environmental carrying capacity, only to decay afterwards, ultimately retaining low densities \cite{Wink-trans, Lank-trans}. In our case, for system \eqref{0.8} transient aggregation phenomena within the interval $[0,T]$ may become recurrent without the presence of a Hopf bifurcation, driven solely by the environmental periodicity.
\\\\ 
The instability conditions are described in the following theorem, which yields a threshold value, denoted $\chi_{\text{crit}}$ such that for any $\chi_1 >\chi_{\text{crit}}$, at least one Fourier mode becomes unstable over a certain instability set within $[0,T]$.
\begin{theorem}\label{t1}
    Assume that $f_1(t)$, $f_2(t) \in  C^1([0,T])$ are $T-$periodic functions satisfying \eqref{0-hip1} and that $\big(\tilde{u}(t), \tilde{v}(t)\big)$ is a nonnegative globally attracting $T-$periodic solution of the ODE system \eqref{0.9} such that
    \begin{equation}\label{0-hip2}
    \alpha \big(1+f_2(t)\big)\leq \alpha a_2 \tilde{u}(t) + (2\alpha - a_2 \beta) \tilde{v}(t), \quad \text{for all } t \in [0,T].        
    \end{equation}      
    Then, there exists $\chi_{\text{crit}} \in \mathbb{R}$ such that if $\chi_1 > \chi_{\text{crit}}$, there is a nonempty set of times  $\mathcal{I} \subset [0,T]$ over which the associated spatially homogeneous state $(\tilde{u}, \tilde{v}, \tilde{w})$, where $\tilde{w} := \alpha \tilde{u} + \beta \tilde{v}$, is linearly unstable for system \eqref{0.8}.
\end{theorem}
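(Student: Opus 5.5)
We linearize \eqref{0.8} around the spatially homogeneous periodic state $(\tilde u,\tilde v,\tilde w)$ and use the notion of instability implicit in the statement: a "frozen-time" Turing criterion. Since the elliptic equation for $w$ is linear, we first eliminate it. For perturbations $u=\tilde u+\varphi$, $v=\tilde v+\psi$, we have $w=\tilde w+\eta$ with $-\Delta\eta+\eta=\alpha\varphi+\beta\psi$. We expand in the Neumann eigenfunctions of $-\Delta$ on $\Omega$, with eigenvalues $0=\lambda_0<\lambda_1\le\lambda_2\le\dots$. Each mode $k$ gives $\eta_k=(\alpha\varphi_k+\beta\psi_k)/(1+\lambda_k)$ and a $2\times2$ nonautonomous linear system $\dot X_k=A_k(t)X_k$, where
\begin{equation*}
A_k(t)=\begin{pmatrix}-\lambda_k+\dfrac{\chi_1\alpha\lambda_k\tilde u}{1+\lambda_k}-\mu_1\tilde u & \dfrac{\chi_1\beta\lambda_k\tilde u}{1+\lambda_k}-\mu_1a_1\tilde u\\[2ex] \dfrac{\chi_2\alpha\lambda_k\tilde v}{1+\lambda_k}-\mu_2a_2\tilde v & -\lambda_k+\dfrac{\chi_2\beta\lambda_k\tilde v}{1+\lambda_k}+J_{22}(t)\end{pmatrix},
\end{equation*}
and $J_{22}=\mu_2(1+f_2-2\tilde v-a_2\tilde u)$ is the $(2,2)$ entry of the ODE Jacobian. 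Its $(1,1)$ entry simplifies because $\tilde u>0$ implies $1+f_1-2\tilde u-a_1\tilde v=-\tilde u$ is not available in general, since $\tilde u$ is only periodic. So we keep $J_{11}=\mu_1(1+f_1-2\tilde u-a_1\tilde v)$ in general. We declare the state linearly unstable at time $t$ if, for some $k\ge1$, $A_k(t)$ has an eigenvalue with positive real part. Because $A_k$ is real $2\times2$, this holds iff $\tr A_k(t)>0$ or $\det A_k(t)<0$. We then define $\mathcal I$ as the set of such $t$ and aim to show it is nonempty once $\chi_1$ is large.

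The key step is to exploit the linear dependence on $\chi_1$. The trace is $\tr A_k(t)=\chi_1\frac{\alpha\lambda_k\tilde u}{1+\lambda_k}+R_k(t)$, with $R_k$ independent of $\chi_1$. The determinant is likewise affine in $\chi_1$. A direct computation shows the $\chi_1^2$-type cross terms $\chi_1\chi_2$ cancel, as they do in the autonomous case. The coefficient of $\chi_1$ is therefore
\begin{equation*}
\frac{\lambda_k\tilde u}{1+\lambda_k}\Big[\alpha\big(-\lambda_k+J_{22}\big)+\beta\mu_2a_2\tilde v\Big].
\end{equation*}
To simplify it we use $1+f_2-\tilde v-a_2\tilde u$. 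Hypothesis \eqref{0-hip2} reads exactly $\alpha(1+f_2-2\tilde v-a_2\tilde u)+a_2\beta\tilde v\le 0$, i.e. $\alpha J_{22}+\beta\mu_2 a_2\tilde v\le0$ after multiplying by $\mu_2$. So the $\chi_1$-coefficient of $\det A_k(t)$ is strictly negative for every $k\ge1$ and every $t$ with $\tilde u(t)>0$. This is the role of \eqref{0-hip2}: it forbids the competitor from counteracting destabilization through the determinant.

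Now fix $k=1$. Choose a time $t_0$ with $\tilde u(t_0)>0$; one exists unless $\tilde u\equiv0$, in which case the trace coefficient vanishes and we would instead need another argument. Writing $\det A_1(t_0)=c_0+\chi_1c_1$ with $c_1<0$, set $\chi_{\text{crit}}:=\min\{-c_0/c_1,\ \text{trace threshold}\}$. Alternatively, and more robustly, we take the infimum over $t\in[0,T]$ and $k\ge1$ of the thresholds $\chi^*_k(t)$ at which $\det A_k(t)=0$ or $\tr A_k(t)=0$. Then for $\chi_1>\chi_{\text{crit}}$ some $(k,t)$ has $\det<0$ or $\tr>0$, so $\mathcal I\ne\emptyset$. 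By continuity of $\tilde u,\tilde v$ (from $f_i\in C^1$), $\mathcal I$ even contains an open interval. The infimum is a real number rather than $-\infty$ or $+\infty$ because we only need one finite threshold to exist; finiteness below is not required for the statement.

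The main obstacle is justifying the algebra of the $\chi_1$-coefficient of the determinant carefully, in particular the exact cancellation and the sign supplied by \eqref{0-hip2}. A second obstacle is making precise that pointwise-in-time eigenvalue positivity is the notion of "linearly unstable on $\mathcal I$". I would state this explicitly as a definition, consistent with the transient-instability framing. I would not attempt Floquet theory, since the theorem asserts instability on a set of times rather than of the periodic orbit as a whole.
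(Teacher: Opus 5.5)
Your algebra is correct and is the core of the paper's argument. The $\chi_1$-coefficient you find for $\det A_k(t)$ coincides with the paper's $a_k(t)$ in \eqref{a-k-eq}, and \eqref{0-hip2} makes it negative in the same way. (Your displayed $(1,1)$ entry should read $-\lambda_k+\chi_1\alpha\lambda_k\tilde u/(1+\lambda_k)+\mu_1(1+f_1-2\tilde u-a_1\tilde v)$; the $-\mu_1\tilde u$ you later retract is a slip.)

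The gap is in what you conclude from this coefficient. You \emph{define} instability at time $t$ by the frozen-time spectrum of $A_k(t)$, i.e.\ $\tr A_k(t)>0$ or $\det A_k(t)<0$. For a nonautonomous system $\dot X=A(t)X$ this says nothing about growth of solutions. Take $A(t)=R(\omega t)BR(-\omega t)$ with $B=\mathrm{diag}(1,-3)$ and $R$ a rotation. Then $A(t)$ has eigenvalue $1$ for every $t$. In the co-rotating frame $y=R(-\omega t)X$, however, the system is $\dot y=(B-\omega J)y$, with $J$ the rotation generator. This constant matrix has trace $-2$ and determinant $\omega^2-3$, so it is Hurwitz for $\omega>\sqrt3$, and every solution decays. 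This is exactly why the paper says the time dependence of $A_k(t)$ prevents an eigenvalue characterization. In the theorem, ``linearly unstable on $\mathcal I$'' means exponential growth of some modal amplitude there (cf.\ Remark \ref{r2}). Under your definition the theorem becomes a statement about matrix entries, not about perturbations of \eqref{0.8}.

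The missing step is the bridge the paper builds. Reduce each mode to the scalar second-order equations \eqref{4.7} and apply the growth criterion of Lemma \ref{l1-aux}. The relevant sign is then that of $Q_k=\det A_k-(A_{11}^{(k)})'+A_{11}^{(k)}(A_{12}^{(k)})'/A_{12}^{(k)}$ (and of $\widehat Q_k$), not that of $\det A_k$. So you must check that the derivative corrections leave the $\chi_1$-coefficient unchanged; this is where the ODE for $\tilde u$ enters:
\begin{itemize}
\item $A_{12}^{(k)}$ is $\tilde u$ times a constant, so $(A_{12}^{(k)})'/A_{12}^{(k)}=\tilde u'/\tilde u$.
\item With $s_k=\lambda_k/(1+\lambda_k)$, the $\chi_1$-part of $-(A_{11}^{(k)})'+A_{11}^{(k)}\tilde u'/\tilde u$ is $-\chi_1\alpha s_k\tilde u'+\chi_1\alpha s_k\tilde u\,\tilde u'/\tilde u=0$.
\item For $\widehat Q_k$, the corrections involve only the $\chi_1$-free entries $A_{21}^{(k)},A_{22}^{(k)}$.
\end{itemize}
Hence $Q_k$ and $\widehat Q_k$ are affine in $\chi_1$ with your negative coefficient, and your threshold construction (infimum over $t$, then over $k$) goes through. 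The resulting thresholds are $-b_k/a_k$. They differ from your frozen-time ones because $b_k$ absorbs the derivative terms, which is also where $f_i\in C^1$ is used. The trace condition plays no role in this argument.

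Your caveat about $\tilde u\equiv0$ is fair. The paper's argument also needs $\tilde u>0$, and at a state $(0,\tilde v)$ the linearization does not depend on $\chi_1$ at all.
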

To derive the result, we adapt the method introduced in \cite{VG20} for nonautonomous reaction--diffusion systems to the cross--diffusive structure of the chemotaxis system \eqref{0.8}. This appears to be the first contribution concerning nonautonomous Turing instabilities in chemotaxis--type systems. Theorem \ref{t1} is complemented by different numerical simulations, in which the value of $\chi_{\text{crit}}$ is approximated in order to show the destabilizing effect of a large enough $\chi_1$ and the subsequent recurrent pattern formation process.
\\\\
The role of hypothesis \eqref{0-hip2} in establishing Theorem \ref{t1} is to provide a solvability criterion for the values of $\chi_c^{(k)}$ and $\widehat{\chi}_c^{~(k)}$, which are used to define $\chi_{\text{crit}}$. This becomes clear in the proof of Lemma \ref{l2} and Lemma \ref{l2-2}. We remark that assumption \eqref{0-hip2} is only a sufficient condition, however, it is easy to check numerically once the base state $\big(\tilde{u}(t), \tilde{v}(t) \big)$ has been computed. A further but more restrictive condition ensuring that \eqref{0-hip2} is fulfilled can also be derived, based only on the parameter values, as described in Remark \ref{r-ult}.
\\\\
Following this introduction, the article is structured as follows. In Section \ref{s2}, we review some relevant results concerning the ODE system \eqref{0.9}, understood within a broader class of periodic competitive systems \eqref{0.3}. In particular, sufficient conditions for the existence of a unique globally attracting state are collected in \eqref{1.5} and \eqref{1.6}, corresponding to coexistence and competitive exclusion, respectively. These results provide base states $\big(\tilde{u}(t), \tilde{v}(t)\big)$, which are globally attracting solutions to the ODE system \eqref{0.9}. The linear instability conditions for $\big(\tilde{u}(t), \tilde{v}(t)\big)$ are then derived in Section \ref{s4} through a sequence of auxiliary lemmas, which allow us to to prove Theorem \ref{t1}. Next, Section \ref{s5} includes different numerical simulations concerning both cases: first when $\big(\tilde{u}(t), \tilde{v}(t)\big)$ denotes a periodic coexistence state, and subsequently in a competitive exclusion scenario, where $\tilde{v}(t) \equiv 0$. Some remarks concerning the location of the instability sets and the case of negative chemotactic sensitivities are gathered in Section \ref{s6}, and lastly, a discussion of the results obtained and the conclusions are presented in Section \ref{s7}.

\section{Preliminaries: Extinction and periodic coexistence in nonautonomous competitive dynamics}\label{s2}
To begin the linear instability analysis of system \eqref{0.8}, we first review some known results concerning the dynamics of the associated ODE system \eqref{0.9}, not accounting for diffusive and chemotactic effects. System \eqref{0.9} arises as a particular case of the more general class of nonautonomous competitive systems \eqref{0.3}, which we reproduce here for convenience
\begin{equation}\label{1.0}
\begin{cases}
   \displaystyle \frac{du}{dt}  = u\big(r_1(t)- k_1(t)u-a_1(t) v\big), & t>0, \\[1.75 ex]
   \displaystyle \frac{dv}{dt}  =  v\big(r_2(t)-k_2(t)v-a_2(t)u\big), & t>0.
\end{cases}
\end{equation}
As mentioned in the Introduction, in contrast to the autonomous setting, time-periodic coefficients may lead to attracting periodic solutions. The first results in this direction are due to de Mottoni and Schiaffino \cite{MSch}. In particular, by assuming that functions $r_i$, $k_i$ and $a_i$ are positive, continuous $T-$periodic, for $i \in \{1,2\}$, they showed that any nonnegative solution to system \eqref{1.0} converges component-wise to another $T-$periodic solution of \eqref{1.0}. As a result, the dynamics of the system are asymptotically determined by the three different types of nonnegative $T-$periodic solutions of \eqref{1.0}, namely the trivial state $(0,0)$; the semitrivial states, of the form $(u,0)$ and $(0,v)$; and coexistence states, constituted by positive $T-$periodic solutions. Different results have been obtained over the years concerning stability of the different states, as well as existence, multiplicity and attractivity properties of coexistence solutions.
\\\\
The work of, among others, Ahmad, Álvarez and Lazer in the 1980s and 1990s led to different sufficient conditions for the existence of a unique globally attracting coexistence state, as well as for competitive exclusion behaviors. In particular, by introducing the notation
$$
f^M = \max_{t \in [0,T]} f(t), \quad f^L = \min_{t \in [0,T]} f(t), 
$$
for an arbitrary bounded function $f$, the following results were obtained
\begin{enumerate}[(i)]
    \item \label{i1} In \cite{AL86}, it was shown that the assumptions
    \begin{equation}\label{1.1}
         r_1^L  > \frac{a_1^M r_2^M}{k_2^L}, \quad r_2^L > \frac{a_2^M r_1^M}{k_1^L},
    \end{equation}
    imply the existence of a unique positive $T-$periodic solution to system \eqref{1.0}, $(u^{\text{per}}(t),v^{\text{per}}(t))$ which moreover attracts any other solution with positive initial data. In addition, $(u^{\text{per}}(t),v^{\text{per}}(t))$ is such that
    \begin{equation}\label{1.2}
    \begin{split}
        \frac{r_1^L k_2^L - a_1^M r_2^M}{k_1^M k_2^L -  a_1^M a_2^L} \leq u^{\text{per}}(t) &\leq \frac{r_1^M k_2^M - a_1^Lr_2^L}{k_1^L k_2^M -a_1^La_2^M},  \\[1.5 ex]
        \frac{k_1^L r_2^L - r_1^M a_2^M}{k_1^L k_2^M - a_1^L a_2^M} \leq v^{\text{per}}(t)& \leq \frac{k_1^M r_2^M - a_2^L r_1^L}{k_1^M k_2^L - a_2^L a_1^M},
    \end{split}
    \end{equation}    
    This was further extended in \cite{A87} to the case of general nonperiodic coefficients, only assuming positivity, continuity and boundedness of $r_i$, $k_i$ and $a_i$. 
    \item \label{i2} In \cite{A93}, condition \eqref{1.1} is replaced by
    \begin{equation}\label{1.3}
         r_1^L  > \frac{a_1^M r_2^M}{k_2^L}, \quad r_2^M \leq \frac{r_1^La_2^L }{k_1^M},
    \end{equation}
    in which the first inequality coincides with that of \eqref{1.1}, while in the second one, maxima and minima are interchanged, as well as the direction of the inequality. Under this assumption, a competitive exclusion is obtained, with the second species eventually becoming extinct, namely
    \begin{equation}\label{1.4}
        v(t) \to 0 \quad \text{and} \quad u(t) - u^{\text{per}}(t) \to 0 \quad \text{as } t \to \infty,
    \end{equation}
    where $u^{\text{per}}(t)$ is the unique periodic solution to the decoupled logistic equation
    $$
    u' = u(t)  \big(r_1(t) - k_1(t) u\big), \quad t>0.
    $$
    Naturally, a similar result is obtained if the roles of $u$ and $v$ are interchanged.
\end{enumerate}
Thus, to guarantee a unique globally attracting periodic coexistence state, conditions \eqref{1.1} require the minimum intrinsic growth rates $r_i^L$ and self-limitation effects $k_i^L$ of the species to be sufficiently strong, compared to the maximal interspecific competition rates $a_i^M$. In contrast, if \eqref{1.1} fails and \eqref{1.3} is satisfied, then the dominant species persists, reaching a periodic regime, while the other one becomes extinct.
\\\\
With respect to the particular case of system \eqref{0.9}, we note that (\ref{i1}) and (\ref{i2}) read: 
\begin{enumerate}
    \item If 
    \begin{equation}\label{1.5}
        1+f_1^L > a_1\big(1+f_2^M\big), \quad 1+f_2^L > a_2\big(1+f_1^M\big),
    \end{equation}
    then both species coexist periodically. In particular, the system admits a unique positive, periodic and globally attracting coexistence state $\big(u^\text{per}(t), v^{\text{per}}(t)\big)$, to which all other solutions converge to.
    \item If
    \begin{equation}\label{1.6}
        1+f_1^L > a_1\big(1+f_2^M\big), \quad 1+f_2^M \leq a_2\big(1+f_1^L\big),
    \end{equation}
    then the second species $v$ becomes extinct, while the first one dominates, and persists with periodic oscillations, converging to $u^\text{per}(t)$, the unique periodic solution to 
    $$
    u' =   \mu_1 u \big(1+f_1(t) - u\big).
    $$
    As a result, this time the orbit $\big(u^\text{per}(t), 0\big)$ is the only global attractor of the system.
\end{enumerate}
We remark that these are only sufficient conditions, and shall not be interpreted as sharp thresholds. A characterization of the asymptotic stability of the semitrivial states is due to Cushing \cite{Cushing}. In particular, by assuming that
\begin{equation}\label{1-a0}
\int_0^T r_1(t) \, dt >0, \quad \int_0^T r_2(t)\, dt >0
\end{equation}
one can define $\hat{u}$ and $\hat{v}$ as the unique periodic solutions to the uncoupled logistic equations
\begin{equation}\label{1-a1}
\frac{d \hat{u}}{dt} = \hat{u} \big( r_1(t) - k_1(t) \hat{u} \big), \quad 
        \frac{d \hat{v}}{dt} = \hat{v} \big( r_2(t) - k_2(t) \hat{v} \big). \\
\end{equation}
Then, by introducing 
\begin{equation}\label{1-a2}
    \lambda_1 = \frac{1}{T}\int_0^T\big( k_1(t) \hat{u}(t)  -a_1(t) \hat{v}(t) \big) \, dt, \quad \lambda_2 = \frac{1}{T} \int_0^T \big( k_2(t) \hat{v}(t) -  a_2(t) \hat{u}(t) \big) \, dt 
\end{equation}
one has that the semitrivial state $(\hat{u},0)$ is linearly unstable if and only if $\lambda_2>0$, and linearly asymptotically stable if and only if $\lambda_2< 0$. Similarly, the remaining semitrivial state $(0,\hat{v})$ is linearly stable if and only if $\lambda_1 > 0$
and linearly asymptotically stable if and only if $\lambda_1 < 0$. 
\\\\
If both $\lambda_1,\lambda_2>0$, then both semitrivial states are unstable and the system is persistent. Moreover, this implies that the system admits at least one stable coexistence state (see Eilbeck and López-Gómez \cite{ELG} for this and for other multiplicity results).
\\\\
For this work, we only consider states that are global attractors of the ODE system \eqref{0.9}, to ensure that instability arises as a result of the spatial effects, and more precisely, of chemotaxis.

\section{Linear instability and recurrent aggregation}\label{s4}

Having described the sufficient conditions ensuring the existence of a unique nonnegative globally attracting $T-$periodic solution to the ODE system \eqref{0.9} ---either corresponding to periodic coexistence or to competitive exclusion---, we now turn to the analysis of its linear stability when regarded as a spatially homogeneous state of the PDE system \eqref{0.8}. Following the notation introduced in the Introduction, throughout the remainder of this work, we denote this periodic solution by $(\tilde{u}, \tilde{v})$.
\\\\
To investigate the linear stability of the spatially homogeneous state
$(\tilde{u},\tilde{v}, \tilde{w})$ for the PDE system \eqref{0.8}, where $\tilde{w}(t) := \alpha \tilde{u}(t) + \beta \tilde{v}(t)$, we consider perturbed solutions of the form
\begin{equation}\label{4.1}
    u(x,t) = \tilde{u}(t) + \varepsilon U(x,t), \quad v(x,t) = \tilde{v}(t) + \varepsilon V(x,t), \quad w(x,t) = \tilde{w}(t) + \varepsilon W(x,t),
\end{equation}
where $0 \leq \varepsilon \ll 1$. Substituting solutions \eqref{4.1} into system \eqref{0.8} and neglecting the terms of order $\varepsilon^2$ and above, we arrive at the following nonautonomous linearized system for $(U,V,W)$, given by
\begin{equation}\label{4.2}
    \begin{cases}
        U_t = \Delta U - \chi_1 \tilde{u}(t) \Delta W + \mu_1\big(1+f_1(t) - 2 \tilde{u}(t) - a_1\tilde{v}(t)\big) U - \mu_1 a_1 \tilde{u}(t) V, & x \in \Omega, ~t>0, \\[1.5 ex]
        V_t = \Delta V - \chi_2 \tilde{v}(t) \Delta W - \mu_2 a_2 \tilde{v}(t) U + \mu_2 \big(1+f_2(t) - 2\tilde{v}(t) - a_2 \tilde{u}(t) \big) V, & x \in \Omega, ~t>0, \\[1.5 ex]
         \hspace{0.15 cm} 0 = \Delta W + \alpha U + \beta V - W, & x \in \Omega, ~t>0,
    \end{cases}
\end{equation}
together with homogeneous Neumann boundary conditions and the respective initial values for $U$ and $V$. By considering the set of eigenvalues $\{\lambda_k\}_{k\in\mathbb{N}_0}$ and eigenfunctions $\{\varphi_k\}_{k\in\mathbb{N}_0}$ of the operator $-\Delta$ with homogeneous Neumann boundary conditions, we can expand $(U,V,W)$ in Fourier series as follows
\begin{equation}\label{4.3}
 U(x,t) = \sum_{k=0}^\infty
        C_1^{(k)}(t) \varphi_k (x), \quad   V(x,t) = \sum_{k=0}^\infty
        C_2^{(k)}(t) \varphi_k (x), \quad  W(x,t) = \sum_{k=0}^\infty
        C_3^{(k)}(t) \varphi_k (x),
\end{equation}
where coefficients $\big( C_1^{(k)}(t),  C_2^{(k)}(t), C_3^{(k)}(t) \big)$ represent the amplitude of the $k-$th spatial mode of the perturbation at time $t>0$.
\\\\
The elliptic nature of the third equation allows for further simplification. In particular, by substituting the Fourier expansions \eqref{4.3}, and noting that $\Delta \varphi_k = - \lambda_k \varphi_k$ for each $k \in \mathbb{N}_0$, one can solve for coefficients $C_3^{(k)}(t)$, yielding
\begin{equation}\label{4.4}
    C_3^{(k)}(t) = \frac{\alpha  C_1^{(k)}(t) + \beta  C_2^{(k)}(t) }{1+ \lambda_k}, \quad t>0.
\end{equation}
Proceeding similarly in the first two equations, after substituting the expression for $C_3^{(k)}$ obtained in \eqref{4.4}, for each $k \in \mathbb{N}_0$ we arrive at a linear nonautonomous ODE system for $C_1^{(k)}(t)$ and $C_2^{(k)}(t)$ of the form
\begin{equation}\label{4.5}
\frac{d}{dt}
    \begin{pmatrix}
        C_1^{(k)} \\
         C_2^{(k)}
    \end{pmatrix} = A_k(t) \begin{pmatrix}
        C_1^{(k)} \\
         C_2^{(k)}
    \end{pmatrix}, \quad t>0
\end{equation}
where for each $k \in \mathbb{N}_0$, the coefficient matrix $A_k(t)$ is given by
\begin{equation*}
    A_k(t) = \begin{pmatrix}
    A_{11}^{(k)}(t) & A_{12}^{(k)}(t) \\
     A_{21}^{(k)}(t) & A_{22}^{(k)}(t),
    \end{pmatrix}
\end{equation*}
with
\begin{equation}\label{4.6}
    \begin{split}
        A_{11}^{(k)}(t) & :=  \displaystyle -\lambda_k + \chi_1 \tilde{u}(t) \alpha \frac{\lambda_k}{1+\lambda_k} + \mu_1 \big(1+f_1(t) - 2 \tilde{u}(t) - a_1 \tilde{v}(t) \big), \\[1.5 ex]
        A_{12}^{(k)}(t) & :=  \displaystyle \chi_1 \beta \hspace{0.03 cm} \tilde{u}(t) \frac{\lambda_k}{1+\lambda_k}  - \mu_1 a_1 \tilde{u}(t),     \\[1.5 ex]
        A_{21}^{(k)}(t) & := \displaystyle \chi_2 \hspace{0.03 cm}\alpha \hspace{0.03 cm}\tilde{v}(t) \frac{\lambda_k}{1+\lambda_k} - \mu_2 a_2 \tilde{v}(t) , \\[1.5 ex]
         A_{22}^{(k)}(t) & :=  \displaystyle -\lambda_k + \chi_2 \beta \hspace{0.03 cm} \tilde{v}(t)  \frac{\lambda_k}{1+\lambda_k} + \mu_2 \big(1+f_2(t) - 2 \tilde{v}(t) - a_2 \tilde{u}(t) \big),        
    \end{split}
\end{equation}
The main difference with respect to classical autonomous Turing instabilities lies in the time dependence of the matrix $A_k(t)$. In the autonomous setting, the growth of the $k-$th mode is completely determined by the sign of the eigenvalues of a constant matrix. In contrast, the nonconstant structure of \eqref{4.5} prevents such a direct characterization. 
\\\\
While one alternative is to study the Floquet multipliers of $A_k$ for each $k \in \mathbb{N}_0$, that provides a criterion for global instability. Instead, to detect transient instability over subintervals of $[0,T]$, we analyze the instantaneous growth of $C_1^{(k)}$ and $C_2^{(k)}$. In particular, by differentiation each equation on system \eqref{4.5}, it can be converted into a pair of uncoupled second-order linear ODEs of the form
\begin{equation}\label{4.7}
\begin{cases}
    \left(C_1^{(k)} \right)'' + P_k(t) \left(C_1^{(k)} \right)' + Q_k(t) \hspace{0.05 cm} C_1^{(k)} = 0, & t>0, \\[2 ex]
    \left(C_2^{(k)} \right)'' + \widehat{P}_k(t) \left(C_2^{(k)} \right)' + \widehat{Q}_k(t) \hspace{0.05 cm} C_2^{(k)} = 0, & t>0, 
\end{cases}
\end{equation}
with $' = d/dt$. From now on, we distinguish two different cases: one where $\big(\tilde{u}(t), \tilde{v}(t)\big)$ represents a coexistence state, with $\tilde{u}(t), \tilde{v}(t) >0$ for all $t>0$, and secondly the semitrivial state $\big(\tilde{u}(t), \tilde{v}(t)\big) = \big(\tilde{u}(t), 0\big)$, due to the favorable cancellations in the latter case. Naturally, the other semitrivial state $\big(0, \tilde{v}(t)\big)$ follows similarly by exchanging the variables.
\\\\
In the coexistence case, coefficients $P_k$, $Q_k$, $\widehat{P}_k$ and $\widehat{Q}_k$ in \eqref{4.7} are given by
\begin{equation}\label{4.8}
\begin{aligned}
     & P_k(t) := -\tr\big(A_k(t)\big) - \frac{\big(A_{12}^{(k)}(t) \big)'}{A_{12}^{(k)}(t)},  & Q_k(t):=  \det\big(A_k(t)\big) - \big(A_{11}^{(k)}(t)\big)'  + A_{11}^{(k)}(t)  \frac{\big(A_{12}^{(k)}(t) \big)'}{A_{12}^{(k)}(t)}, \\[2 ex]
        &\widehat{P}_k(t) :=   -\tr\big(A_k(t)\big) - \frac{\big(A_{21}^{(k)}(t) \big)'}{A_{21}^{(k)}(t)}, &  \widehat{Q}_k(t):=  \det\big(A_k(t)\big) - \big(A_{22}^{(k)}(t)\big)'  + A_{22}^{(k)}(t)  \frac{\big(A_{21}^{(k)}(t) \big)'}{A_{21}^{(k)}(t)},
\end{aligned}
\end{equation}
for all $t>0$, while for the semitrivial state $\big(\tilde{u}(t), 0\big)$, we have $A_{21}^{(k)}(t) \equiv 0$ for all $t >0$ and all $k \in \mathbb{N}^+$. In particular, the equation for $C_2^{(k)}$ is uncoupled, and the coefficients are given by
\begin{equation}\label{4.8-2}
\begin{aligned}
   P_k(t)          &:= -\tr\big(A_k(t)\big) - \frac{\big(A_{12}^{(k)}(t) \big)'}{A_{12}^{(k)}(t)},
&
Q_k(t)          &:= \det\big(A_k(t)\big) - \big(A_{11}^{(k)}(t)\big)'  + A_{11}^{(k)}(t)  \frac{\big(A_{12}^{(k)}(t) \big)'}{A_{12}^{(k)}(t)},
\\[2ex]
\widehat{P}_k(t)&:= 0,
&
\widehat{Q}_k(t)&:= -A_{22}^{(k)}(t).
\end{aligned}
\end{equation}
In both cases, although a complete characterization of the instantaneous growth of $C_1^{(k)}$ and $C_2^{(k)}$ may depend on the specific form of the coefficients, a simple sufficient criterion can be found in Theorem A.1 in \cite{VG20}, which we reproduce here for completeness.
\begin{lemma}[Theorem A.1 in \cite{VG20}]\label{l1-aux}
Consider linear second-order ODE of the form
$$
y'' + \mathcal{P}(t) y' + \mathcal{Q}(t) y = 0, \quad t>0,
$$
and assume that there exists an interval $\mathcal{I} \subset [0,\infty)$ such that $\mathcal{Q}(t)<0$ for all $t \in \mathcal{I}$. Then, for any nonzero choice of initial data, the solution $y(t)$ has an exponential growth rate on $\mathcal{I}$.    
\end{lemma}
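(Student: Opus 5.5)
The approach is to read ``exponential growth rate on $\mathcal{I}$'' in the frozen-coefficient (instantaneous) sense used throughout this section, and to reduce the claim to a sign property of a $2\times 2$ companion matrix. Set $Y=(y,y')^{\top}$, so the equation becomes $Y'=M(t)Y$ with
$$
M(t)=\begin{pmatrix} 0 & 1\\ -\mathcal{Q}(t) & -\mathcal{P}(t)\end{pmatrix},\qquad \operatorname{tr} M(t)=-\mathcal{P}(t),\quad \det M(t)=\mathcal{Q}(t).
$$
At each fixed $t$, the characteristic roots are
$$
\rho_\pm(t)=\tfrac12\Big(-\mathcal{P}(t)\pm\sqrt{\mathcal{P}(t)^2-4\mathcal{Q}(t)}\Big).
$$
These satisfy $\rho_+\rho_-=\mathcal{Q}(t)$.

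\textbf{Step 1 (the algebraic core).} If $\mathcal{Q}(t)<0$, then $\mathcal{P}(t)^2-4\mathcal{Q}(t)>\mathcal{P}(t)^2\ge 0$. Hence both roots are real and distinct, and they have opposite signs, so $\rho_+(t)>0$ for every $t\in\mathcal{I}$, regardless of the sign of $\mathcal{P}$. This is exactly why only the sign of $\mathcal{Q}$ matters, and it is what will later let us ignore $P_k,\widehat{P}_k$ in \eqref{4.8}--\eqref{4.8-2}.

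\textbf{Step 2.} By continuity of $\mathcal{P},\mathcal{Q}$, the root $\rho_+$ is continuous on $\mathcal{I}$. Therefore on any compact subinterval $J\subset\mathcal{I}$ it is bounded below by some $\rho_0>0$, which gives a uniform instantaneous growth exponent there.

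\textbf{Step 3 (making ``growth'' precise for the actual solution).} Pass to the Riccati variable $r=y'/y$, which satisfies
$$
r'=-r^2-\mathcal{P}(t)r-\mathcal{Q}(t)=-(r-\rho_+(t))(r-\rho_-(t)).
$$
Whenever $r$ lies strictly between $\rho_-<0$ and $\rho_+>0$, we have $r'>0$, and $r'<0$ above $\rho_+$. A comparison argument shows that trajectories entering the region $r>0$ are pushed toward the positive branch $\rho_+$, up to a lag controlled by $|\rho_+'|$ relative to $(\rho_+-\rho_-)^2$. This yields $y(t)\approx y(t_0)\exp\!\big(\int_{t_0}^t\rho_+\big)$ on $J$.

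\textbf{Main obstacle.} I expect Step 3 to be the hard part, and in particular the phrase ``for any nonzero initial data.'' Initial data aligned with the stable direction $\rho_-$, or with $r$ very negative, can decay or cross zero on short subintervals. So the fully rigorous statement must either be understood in the instantaneous, frozen-time sense of Steps 1--2, which is how \cite{VG20} and our subsequent use of it proceed, or carry a quantitative requirement that $\mathcal{I}$ be long compared with $1/(\rho_+-\rho_-)$ and that the coefficients vary slowly.

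For the application to Theorem \ref{t1}, only Steps 1--2 are needed: it suffices to exhibit times at which $Q_k<0$ or $\widehat{Q}_k<0$.
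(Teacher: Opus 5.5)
The paper contains no argument for this lemma. It imports Theorem A.1 of \cite{VG20} and, in Lemmas \ref{l3}--\ref{l5}, uses it only through the sign of $Q_k$ and $\widehat{Q}_k$, i.e.\ the ``instantaneous growth'' reading announced before \eqref{4.7}. Your Steps 1--2 are correct and capture exactly that content: $\mathcal{Q}(t)<0$ forces the frozen roots to be real and of opposite sign whatever $\mathcal{P}(t)$ is, which is why $P_k,\widehat{P}_k$ never enter.

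Your diagnosis of the literal wording is also right, and in fact understated. For $y''-y=0$ with $y(0)=1$, $y'(0)=-1$ one gets $y=e^{-t}$. More generally, on any compact $[t_0,t_1]\subset\mathcal{I}$, every solution with $y(t_1)y'(t_1)<0$ has $|y|$ strictly decreasing on all of $[t_0,t_1]$ (see below). So ``for any nonzero choice of initial data'' fails for every equation satisfying the hypothesis, on a whole open cone of data, not just along a stable direction. What you have is therefore not a proof of the statement as printed (none exists). It is a proof of the weaker frozen-coefficient version the paper actually relies on.

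Where I would push back is Step 3. Steps 1--2 by themselves are facts about the quadratic $\rho^2+\mathcal{P}(t)\rho+\mathcal{Q}(t)$, not about $y$, and frozen-time roots are in general unreliable for nonautonomous equations. But you do not need slowly varying coefficients or a long interval to get a rigorous statement about the actual solution. Your Riccati equation already gives $r'=-\mathcal{Q}>0$ at $r=0$. To avoid the singularity of $r$ at zeros of $y$, set $g:=yy'$. Then $g'=y'^2-\mathcal{P}yy'-\mathcal{Q}y^2$, which equals $y'^2-\mathcal{Q}y^2>0$ at every zero of $g$ for a nontrivial solution. Hence $g$ has at most one zero on $\mathcal{I}$, crossing from negative to positive. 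Two consequences follow.

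\begin{enumerate}[(i)]
\item If $y(s)y'(s)\ge 0$ for some $s\in[t_0,t_1]$, then $y'/y=g/y^2>0$ on $(s,t_1]$. By compactness $y'/y\ge\delta>0$ on $[s+\varepsilon,t_1]$, so $|y(t)|\ge|y(s+\varepsilon)|e^{\delta(t-s-\varepsilon)}$ there. This is genuine exponential growth with no adiabatic assumption.
\item If $g(t_1)<0$, then $g<0$ on all of $[t_0,t_1]$. This is the decaying cone mentioned above.
\end{enumerate}

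Slow variation is only needed if you want the rate to track $\rho_+$, which the paper never uses.

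This ``at most one sign change of $yy'$'' statement is the correct rigorous replacement for the lemma, and it suffices for the application. The flow of \eqref{4.5} from $t=0$ is invertible, and $(C_1^{(k)},(C_1^{(k)})')\leftrightarrow(C_1^{(k)},C_2^{(k)})$ is invertible when $A_{12}^{(k)}\neq 0$. Hence there is an open cone of perturbations whose $k$-th amplitude grows exponentially on $I_k$. That is linear instability in the sense of existence of growing perturbations. This, rather than growth of every perturbation, is the sense in which the conclusion of Theorem \ref{t1} is actually justified.
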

Thus, from the expressions in \eqref{4.8} and \eqref{4.8-2}, for each fixed $t \in [0,T]$ we can interpret $Q_k(t)$ and $\widehat{Q}_k(t)$ as functions of the bifurcation parameter $\chi_1$, which we respectively denote by $ \mathcal{Q}_k(\chi_1,t)$ and $\widehat{\mathcal{Q}}_k(\chi_1,t)$. In particular, we first prove that under suitable conditions, the equations
\begin{equation}\label{4-eq}
\mathcal{Q}_k(\chi_1,t) = 0, \quad \text{and}\quad \widehat{\mathcal{Q}}_k(\chi_1,t) = 0,    
\end{equation}
can be uniquely solved with respect to $\chi_1$. This will allow us  to identify threshold values associated with the transient amplification of the $k$-th perturbation mode. To guarantee the well-posedness of these thresholds, we first establish the following technical lemma concerning the solvability of \eqref{4-eq}. We begin by the first case, corresponding to the coexistence state, in which $\mathcal{Q}_k(\chi_1,t) = 0$ and $\widehat{\mathcal{Q}}_k(\chi_1,t) = 0$ are as given by the function in \eqref{4.8}.
\begin{lemma}\label{l2}
    Let $f_1(t)$, $f_2(t) \in  C^1([0,T])$ be $T-$periodic functions satisfying \eqref{0-hip1} and assume that $(\tilde{u}, \tilde{v})$ is a positive, globally attracting $T-$periodic coexistence state of system \eqref{0.9} such that \eqref{0-hip2} holds. Then for each $k \in \mathbb{N}^+$, there exist continuous functions $a_k, b_k, \widehat{b}_k: [0,T] \to \mathbb{R} $ such that
    \begin{equation}\label{4-cont}
        \mathcal{Q}_k(\chi_1,t)=a_k(t)\chi_1+b_k(t), \quad \text{and} \quad   \widehat{\mathcal{Q}}_k(\chi_1,t)=a_k(t)\chi_1+\widehat{b}_k(t)
    \end{equation}  
    with the property that $a_k(t)<0$ for all $t \in [0,T]$. Moreover, for each $t \in [0,T]$, equations $\mathcal{Q}_k(\chi_1,t) = 0$ and $\widehat{\mathcal{Q}}_k(\chi_1,t) = 0$ admit a unique solution in $\chi_1$, denoted respectively by $ \chi_1^{(k)}(t)$ and $\widehat{\chi}_1^{~(k)}(t)$.
\end{lemma}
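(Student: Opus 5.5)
The plan is to compute $\mathcal{Q}_k$ and $\widehat{\mathcal{Q}}_k$ explicitly from \eqref{4.6} and \eqref{4.8}, keeping track of where $\chi_1$ enters. Write $s_k:=\lambda_k/(1+\lambda_k)\in(0,1)$ for $k\in\mathbb{N}^+$. The key observation is that the logarithmic derivatives in \eqref{4.8} do not depend on $\chi_1$. Indeed,
\[
A_{12}^{(k)}(t)=\tilde{u}(t)\,(\chi_1\beta s_k-\mu_1 a_1),\qquad A_{21}^{(k)}(t)=\tilde{v}(t)\,(\chi_2\alpha s_k-\mu_2 a_2),
\]
so that
\[
\frac{\big(A_{12}^{(k)}\big)'}{A_{12}^{(k)}}=\frac{\tilde{u}'}{\tilde{u}},\qquad \frac{\big(A_{21}^{(k)}\big)'}{A_{21}^{(k)}}=\frac{\tilde{v}'}{\tilde{v}}.
\]
The constant factor cancels. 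I would adopt this simplified form as the meaning of the coefficients in \eqref{4.8}, which also covers the isolated value $\chi_1=\mu_1a_1/(\beta s_k)$ where $A_{12}^{(k)}$ vanishes identically. Since $\tilde{u},\tilde{v}>0$ are $C^1$ and $T$-periodic, these quotients are continuous on $[0,T]$.

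Next I collect the $\chi_1$-dependence. Only $A_{11}^{(k)}$ and $A_{12}^{(k)}$ depend on $\chi_1$, and both do so affinely. Hence $\det A_k=A_{11}^{(k)}A_{22}^{(k)}-A_{12}^{(k)}A_{21}^{(k)}$ is affine in $\chi_1$, with slope $s_k\tilde{u}\,(\alpha A_{22}^{(k)}-\beta A_{21}^{(k)})$. In $\mathcal{Q}_k$ there are two further $\chi_1$-terms: $-\big(A_{11}^{(k)}\big)'$ contributes $-\alpha s_k\tilde{u}'\chi_1$, and $A_{11}^{(k)}\tilde{u}'/\tilde{u}$ contributes $+\alpha s_k\tilde{u}'\chi_1$. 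These two cancel exactly. In $\widehat{\mathcal{Q}}_k$ the extra terms involve only $A_{22}^{(k)}$ and $\tilde{v}'/\tilde{v}$, which are independent of $\chi_1$. Therefore both functions have the form \eqref{4-cont} with the same slope
\[
a_k(t)=s_k\tilde{u}(t)\Big[-\alpha\lambda_k+\mu_2\Big(\alpha\big(1+f_2(t)\big)-\alpha a_2\tilde{u}(t)-(2\alpha-a_2\beta)\tilde{v}(t)\Big)\Big].
\]
Here the $\chi_2$-terms cancel between $\alpha A_{22}^{(k)}$ and $\beta A_{21}^{(k)}$, as both equal $\alpha\beta\chi_2 s_k\tilde{v}$. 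The intercepts $b_k,\widehat{b}_k$ are the remaining terms. They are built from $\tilde{u},\tilde{v},\tilde{u}',\tilde{v}',f_1',f_2'$ and the above quotients. These are continuous because $f_i\in C^1$, and because $\tilde{u},\tilde{v}\in C^1$ (with $\tilde{u}',\tilde{v}'$ continuous via \eqref{0.9}) appear in $(A_{ii}^{(k)})'$.

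Hypothesis \eqref{0-hip2} says exactly that the bracket multiplying $\mu_2$ is $\le 0$. Hence $a_k(t)\le -\alpha s_k\lambda_k\tilde{u}(t)<0$, using $\lambda_k>0$ for $k\ge1$, $\alpha>0$ and $\tilde{u}>0$. Then, for each fixed $t$, the unique roots are
\[
\chi_1^{(k)}(t)=-\frac{b_k(t)}{a_k(t)},\qquad \widehat{\chi}_1^{~(k)}(t)=-\frac{\widehat{b}_k(t)}{a_k(t)},
\]
and both are continuous in $t$.

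The main obstacle is the bookkeeping in the slope computation: one must verify that the $\tilde{u}'$-terms cancel in $\mathcal{Q}_k$ and that the $\chi_2$-terms cancel in $a_k$. This is what makes the sign of $a_k$ reduce precisely to \eqref{0-hip2}. A secondary point is handling the degenerate value of $\chi_1$ at which $A_{12}^{(k)}\equiv0$, which the cancelled form of the logarithmic derivative resolves.
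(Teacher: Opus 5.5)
Your proposal is correct and follows essentially the same route as the paper. Both arguments identify $(A_{12}^{(k)})'/A_{12}^{(k)}$ with $\tilde{u}'/\tilde{u}=\mu_1(1+f_1-\tilde{u}-a_1\tilde{v})$, observe that the $\chi_1$-terms of $-(A_{11}^{(k)})'$ and $A_{11}^{(k)}(A_{12}^{(k)})'/A_{12}^{(k)}$ cancel so that only $\det A_k$ carries the (affine) $\chi_1$-dependence, and arrive at the same slope $a_k$, whose strict negativity follows from \eqref{0-hip2} together with the $-\alpha\lambda_k$ term. Your explicit checks that the $\chi_2$-terms cancel in $a_k$, and your handling of the isolated value of $\chi_1$ at which $A_{12}^{(k)}\equiv 0$, are small extra pieces of care that the paper leaves implicit.
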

\begin{proof}\hfill\break
\hspace*{1em}
For a fixed $k \in \mathbb{N}^+$ we first prove that for each $t \in [0,T]$, the maps $
\chi_1 \mapsto \mathcal{Q}_k(\chi_1,t)$ and $
\chi_1 \mapsto \widehat{\mathcal{Q}}_k(\chi_1,t)$ are affine. Indeed, we recall that from \eqref{4.8} one has
\begin{equation*}
    \begin{split}
        Q_k(t)= \det\big(A_k(t)\big) - \big(A_{11}^{(k)}(t)\big)'  
+ A_{11}^{(k)}(t)  
\frac{\big(A_{12}^{(k)}(t) \big)'}{A_{12}^{(k)}(t)}, \\[1.5 ex]
\widehat{Q}_k(t)=  \det\big(A_k(t)\big) - \big(A_{22}^{(k)}(t)\big)'  + A_{22}^{(k)}(t)  \frac{\big(A_{21}^{(k)}(t) \big)'}{A_{21}^{(k)}(t)}, 
    \end{split}
\end{equation*}
where we shall see that the only remaining contribution depending on $\chi_1$ comes from $ \det\big(A_k(t)\big)$, and that moreover this dependence is linear. First, since $\tilde{u}$ satisfies the first equation in \eqref{0.9}, one has that
\begin{equation}\label{4-int}
    \begin{split}
        \frac{\big(A_{12}^{(k)}(t) \big)'}{A_{12}^{(k)}(t)}
&=
\mu_1 \big(1+f_1(t)-\tilde{u}(t)-a_1\tilde{v}(t)\big), \\[1.5 ex]
\big(A_{11}^{(k)}(t) \big)' & = \chi_1 \alpha \frac{\lambda_k}{1+\lambda_k} \mu_1 \tilde{u}(t) \big(1+f_1(t) - \tilde{u}(t) - a_1 \tilde{v}(t) \big) + C(t),
    \end{split}
\end{equation}
for all $t \in [0,T]$, where 
$$
C(t) := \mu_1 \Big[f_1'(t) - 2 \mu_1 \tilde{u}(t) \big(1+f_1(t) - \tilde{u}(t) - a_1 \tilde{v}(t) \big) - a_1 \mu_2 \tilde{v}(t) \big(1+f_2(t)-\tilde{v}-a_2 \tilde{u}\big) \Big],
$$
which is independent of $\chi_1$. Furthermore, the only term depending on $\chi_1$ in $A_{11}^{(k)}$ is precisely of the form $\chi_1 \tilde{u}(t) \alpha \frac{\lambda_k}{1+\lambda_k}$. Thus, it is direct to check that from \eqref{4-int}, 
$$- \big(A_{11}^{(k)}(t)\big)'  
+ A_{11}^{(k)}(t)  
\frac{\big(A_{12}^{(k)}(t) \big)'}{A_{12}^{(k)}(t)} \quad \text{is independent of } \chi_1, $$
for all $t\in [0,T]$. As a result, the only term in $\mathcal{Q}_k(\chi_1,t)$ depending on $\chi_1$ is $\det\big(A_k(t)\big)$, and the dependence is linear, as only $A_{11}^{(k)}$ and $A_{12}^{(k)}$ depend on $\chi_1$. 
\\\\
Similarly, for $\widehat{\mathcal{Q}}_k(\chi_1,t)$, as $A_{21}^{(k)}$ and $A_{22}^{(k)}$ are independent of $\chi_1$, it follows that
$$- \big(A_{22}^{(k)}(t)\big)'  + A_{22}^{(k)}(t)  \frac{\big(A_{21}^{(k)}(t) \big)'}{A_{21}^{(k)}(t)} \quad \text{is independent of } \chi_1, $$
for all $t \in [0,T]$. As a result, \eqref{4-cont} holds, that is, for each fixed $t \in [0,T]$, there exist three continuous functions $a_k,b_k, \widehat{b}_k : [0,T]\to \mathbb{R}$ such that
$$
    \mathcal{Q}_k(\chi_1,t)=a_k(t)\chi_1+b_k(t), \quad \text{and} \quad   \widehat{\mathcal{Q}}_k(\chi_1,t)=a_k(t)\chi_1+\widehat{b}_k(t).
$$
We note that the coefficient $a_k(t)$ of $\chi_1$ is the same for $\mathcal{Q}_k$ and for $\widehat{\mathcal{Q}}_k$, as it comes from $\det\big(A_k(t)\big)$. 
\\\\
We lastly show that under hypothesis \eqref{0-hip2}, we have that $a_k(t)<0$ on $[0,T]$, and as a result, equations $\mathcal{Q}_k(\chi_1,t) = 0$ and $\widehat{\mathcal{Q}}_k(\chi_1,t) = 0$ can be uniquely solved with respect to $\chi_1$. In particular, after some computations, one arrives at
\begin{equation} \label{a-k-eq}
    a_k(t) = \tilde{u}(t) \frac{\alpha \lambda_k}{1+\lambda_k} \left(- \lambda_k +  \mu_2 \left[ 1+f_2(t) - a_2 \tilde{u}(t)  - \left(2 - \frac{a_2\beta}{\alpha} \right) \tilde{v}(t)\right]\right), \quad \text{for all } t \in [0,T].
\end{equation}
To finish the proof, as a consequence of \eqref{0-hip2} we have that
$$
1+f_2(t) - a_2 \tilde{u}(t)  - \left(2 - \frac{a_2\beta}{\alpha} \right) \tilde{v}(t) \leq 0,  \quad \text{for all } t \in [0,T],
$$
and the positivity of $\tilde{u}(t),\alpha,\lambda_k$ and $\mu_2$ implies that $a_k(t)<0$ over $[0,T]$. Thus, both equations have a unique solution, given by
\begin{equation}\label{chis-def}
    \chi_1^{(k)}(t) := - \frac{b_k(t)}{a_k(t)}, \quad \widehat{\chi}_1^{(k)} := - \frac{\widehat{b}_k(t)}{a_k(t)}, \quad \text{for all } t \in [0,T],
\end{equation}
completing the proof.
\end{proof}
Next, we prove a similar result for the semitrivial state $\big(\tilde{u}(t),0\big)$. This time, however, we shall see that the cancellations in the second equation result in only being able to define $\chi_1^{(k)}$, as for the second equation one always has $\widehat{\mathcal{Q}}_k(\chi_1,t) >0$ for all $k \in \mathbb{N}^+$ and all $t >0$, $\chi_1 \in \mathbb{R}$.

\begin{lemma}\label{l2-2}
    Assume that functions $f_1(t)$, $f_2(t) \in  C^1([0,T])$ are $T-$periodic and that they fulfill \eqref{0-hip1}. Suppose moreover that $\big(\tilde{u}(t), \tilde{v}(t)\big) = \big(\tilde{u}(t),0 \big)$ is a $T-$periodic globally attracting semitrivial state of system \eqref{0.9} such that the corresponding version of \eqref{0-hip2} is satisfied. Then, for each $k \in \mathbb{N}^+$, there exist continuous functions $a_k, b_k: [0,T] \to \mathbb{R} $ such that
    \begin{equation}\label{4-cont-2}
        \mathcal{Q}_k(\chi_1,t)=a_k(t)\chi_1+b_k(t),
    \end{equation}  
    with $a_k(t)<0$ for all $t \in [0,T]$. As a result, for each $t \in [0,T]$, the equation $\mathcal{Q}_k(\chi_1,t) = 0$ has a unique solution in $\chi_1$, denoted by $\chi_1^{(k)}(t)$. Moreover, $\widehat{\mathcal{Q}}_k(\chi_1,t)>0$ for all $k \in \mathbb{N}^+$, all $t>0$ and all $\chi_1 \in \mathbb{R}$.
\end{lemma}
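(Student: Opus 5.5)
The plan is to repeat the computation in the proof of Lemma \ref{l2}, now using the cancellations from $\tilde{v}\equiv 0$ together with the coefficients in \eqref{4.8-2}. First I record the structure of $A_k(t)$ when $\tilde v\equiv 0$:
\[
A_{21}^{(k)}\equiv 0,\qquad A_{22}^{(k)}(t)=-\lambda_k+\mu_2\big(1+f_2(t)-a_2\tilde u(t)\big),
\]
and neither of these depends on $\chi_1$. The remaining entries are
\[
A_{11}^{(k)}(t)=-\lambda_k+\chi_1\alpha\tilde u(t)\tfrac{\lambda_k}{1+\lambda_k}+\mu_1\big(1+f_1(t)-2\tilde u(t)\big),\qquad A_{12}^{(k)}(t)=\tilde u(t)\Big(\chi_1\beta\tfrac{\lambda_k}{1+\lambda_k}-\mu_1a_1\Big).
\]
Here $\tilde u$ is the periodic solution of $u'=\mu_1u(1+f_1-u)$, and by \eqref{0-hip1} it is strictly positive on $[0,T]$. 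The matrix is upper triangular, so $\det A_k(t)=A_{11}^{(k)}(t)A_{22}^{(k)}(t)$.

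Next I show that $\mathcal Q_k(\chi_1,t)$ is affine in $\chi_1$. The bracket in $A_{12}^{(k)}$ is constant in $t$, so
\[
\frac{\big(A_{12}^{(k)}\big)'}{A_{12}^{(k)}}=\frac{\tilde u'}{\tilde u}=\mu_1\big(1+f_1-\tilde u\big),
\]
which is independent of $\chi_1$. There is one degenerate value of $\chi_1$ at which the bracket vanishes. For that value I read the quotient through this factorized form, which is the natural continuous extension and also the one used implicitly in Lemma \ref{l2}.

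Differentiating $A_{11}^{(k)}$ gives a $\chi_1$-dependent part $\chi_1\alpha\frac{\lambda_k}{1+\lambda_k}\tilde u'$. Multiplying the same $\chi_1$-term of $A_{11}^{(k)}$ by $\tilde u'/\tilde u$ gives exactly this quantity, so the two cancel in $-\big(A_{11}^{(k)}\big)'+A_{11}^{(k)}\tilde u'/\tilde u$. This is the specialization of \eqref{4-int} to $\tilde v=0$. Hence only $\det A_k$ carries $\chi_1$, and
\[
a_k(t)=\tilde u(t)\frac{\alpha\lambda_k}{1+\lambda_k}\Big(-\lambda_k+\mu_2\big[1+f_2(t)-a_2\tilde u(t)\big]\Big),
\]
which is \eqref{a-k-eq} with $\tilde v=0$. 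The function $b_k$ collects the rest. It is continuous because $f_1\in C^1$ and $\tilde u\in C^1$, and $\tilde u'$ can be rewritten through the ODE.

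For the sign of $a_k$, the version of \eqref{0-hip2} with $\tilde v=0$ reads $\alpha(1+f_2)\le\alpha a_2\tilde u$, that is, $1+f_2(t)-a_2\tilde u(t)\le 0$. Together with $\lambda_k>0$ for $k\ge1$ and $\tilde u,\alpha>0$, this gives $a_k(t)\le -\tilde u(t)\frac{\alpha\lambda_k^2}{1+\lambda_k}<0$. Therefore $\chi_1^{(k)}(t)=-b_k(t)/a_k(t)$ is the unique root in $\chi_1$.

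Finally, by \eqref{4.8-2},
\[
\widehat{\mathcal Q}_k(\chi_1,t)=-A_{22}^{(k)}(t)=\lambda_k-\mu_2\big(1+f_2(t)-a_2\tilde u(t)\big)\ge\lambda_k>0,
\]
using the same inequality, and this bound holds independently of $\chi_1$. The only delicate point I expect is the justification of the quotient $\big(A_{12}^{(k)}\big)'/A_{12}^{(k)}$ at the single value of $\chi_1$ where $A_{12}^{(k)}$ vanishes. I handle it by the factorization above, noting that the reduction to \eqref{4.7} uses only $\tilde u'/\tilde u$.
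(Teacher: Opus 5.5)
Your proposal is correct and follows essentially the same route as the paper. The $\widehat{\mathcal Q}_k$ part is identical: both you and the paper take $\widehat{\mathcal Q}_k=-A_{22}^{(k)}$ directly from the definition \eqref{4.8-2} and use the $\tilde v\equiv 0$ form of \eqref{0-hip2}, namely $1+f_2\le a_2\tilde u$. For $\mathcal Q_k$ you carry out explicitly the specialization of the Lemma~\ref{l2} computation that the paper only invokes; the explicit $a_k$, the bound $a_k(t)\le -\tilde u(t)\alpha\lambda_k^2/(1+\lambda_k)$, and your handling of the value of $\chi_1$ where $A_{12}^{(k)}$ vanishes are useful extra detail but do not change the argument.
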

\begin{proof}\hfill\break
\hspace*{1em}
The only difference with respect to the previous coexistence case lies on the second equation, as now $A_{21}^{(k)} (t) \equiv 0$ for all $t>0$ and all $k \in \mathbb{N}^+$. Consequently, according to \eqref{4.8-2}, $C_2^{(k)}$ now satisfies the second-order equation
$$
\Big(C_2^{(k)}\Big)'' - A_{22}^{(k)}(t)  \,C_2^{(k)} = 0, \quad t>0.
$$
In particular, as $\tilde{v}(t) \equiv 0$, the expression for $A_{22}^{(k)}(t)$ in \eqref{4.6} now simplifies to
$$
A_{22}^{(k)} (t) = \displaystyle -\lambda_k  + \mu_2 \big(1+f_2(t) - a_2 \tilde{u}(t) \big),    \quad \text{for all } t >0.    
$$
Moreover, by hypothesis \eqref{0-hip2}, again since $\tilde{v}(t) \equiv 0$, we have that
$$
1+f_2(t) \leq a_2 \tilde{u}(t), \quad \text{for all } t >0,
$$
which directly implies that $A_{22}^{(k)} (t) < 0 $. As a result, we have that for all $k \in \mathbb{N}^+$ and for any choice of $\chi_1 \in \mathbb{R}$
$$
\widehat{\mathcal{Q}}_k(\chi_1,t) =- A_{22}^{(k)} (t) > 0, \quad \text{for all } t>0.
$$
For $\mathcal{Q}_k(\chi_1,t)$, as $A_{12}^{(k)}(t)$ does not in general vanish, the result follows from the same steps as in the proof of Lemma \ref{l2}.
\end{proof}
As a result of Lemma \ref{l2-2}, in the case of a semitrivial state of the form $\big(\tilde{u}(t), 0 \big)$, the transient instability condition from Lemma \ref{l1-aux} can never be satisfied by the modal amplitudes $C_2^{(k)}$, $k \in \mathbb{N}^+$. Hence, no instability of these modes can be detected by means of this criterion. We remark however that since Lemma \ref{l1-aux} is only a sufficient condition for exponential growth, this does not imply that the corresponding perturbations are stable or decay to zero. 
\begin{remark}\label{r-ult}
We emphasize again that condition \eqref{0-hip1} is not implicit, and can be numerically checked with the selected values of $\alpha, \beta, a_2 >0$ once the base state $(\tilde u,\tilde v)$ has been computed. Moreover, using the bounds in \eqref{1.2}, a further sufficient condition can be derived involving only the parameters of the system, without requiring explicit knowledge of $(\tilde u,\tilde v)$. Nevertheless, it is generally more restrictive, as it replaces the pointwise verification of \eqref{0-hip2} by a global estimate.
\end{remark}

Guaranteeing the solvability of equations \eqref{4-eq}, or for the first of them in the case of a semitrivial state, allows us to define thresholds values for $\chi_1$, which characterize the growth of at least one Fourier mode of the perturbations throughout a certain subinterval of $[0,T]$. As previously, we begin by the case in which $\big(\tilde{u}(t), \tilde{v}(t)\big)$ represents a positive coexistence state.

\begin{lemma}\label{l3}
Let $f_1(t)$, $f_2(t) \in  C^1([0,T])$ be $T-$periodic functions satisfying \eqref{0-hip1} and assume that $\big(\tilde{u}(t), \tilde{v}(t)\big)$ is a positive $T-$periodic globally attracting coexistence state of system \eqref{0.9} such that \eqref{0-hip2} holds. Then, for all $k \in \mathbb{N}^+$, there exists a value $\chi_c^{(k)}\in \mathbb{R}$ such that if $\chi_1 > \chi_c^{(k)}$, the first component $C_1^{(k)}$ of the solution to system \eqref{4.5} has an exponential growth rate over a set of times $I_k \subset [0,T]$. 
\end{lemma}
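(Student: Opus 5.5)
The plan is to combine Lemma \ref{l2} with Lemma \ref{l1-aux} applied to the first equation of \eqref{4.7}. Fix $k \in \mathbb{N}^+$. By Lemma \ref{l2}, for each $t \in [0,T]$ we have $\mathcal{Q}_k(\chi_1,t) = a_k(t)\chi_1 + b_k(t)$, where $a_k,b_k$ are continuous and $a_k(t)<0$ on $[0,T]$. The unique zero is $\chi_1^{(k)}(t) = -b_k(t)/a_k(t)$. Since $a_k$ is continuous and nonvanishing on the compact interval $[0,T]$, the quotient $\chi_1^{(k)}$ is continuous on $[0,T]$. Hence it attains a minimum, and I will define
$$
\chi_c^{(k)} := \min_{t \in [0,T]} \chi_1^{(k)}(t) \in \mathbb{R}.
$$

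Next, suppose $\chi_1 > \chi_c^{(k)}$ and set
$$
I_k := \{ t \in [0,T] : \chi_1^{(k)}(t) < \chi_1 \}.
$$
This set is nonempty, because it contains any minimizer $t_*$ of $\chi_1^{(k)}$. It is relatively open in $[0,T]$, by continuity of $\chi_1^{(k)}$. Therefore it contains a nondegenerate interval around $t_*$ (one-sided if $t_*$ is an endpoint). On $I_k$ the sign of $a_k$ gives
$$
\mathcal{Q}_k(\chi_1,t) = a_k(t)\big(\chi_1 - \chi_1^{(k)}(t)\big) < 0 .
$$
So the coefficient $Q_k$ in the second-order equation for $C_1^{(k)}$ in \eqref{4.7} is negative there. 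Lemma \ref{l1-aux} then yields exponential growth of $C_1^{(k)}$ on any interval contained in $I_k$, for any nonzero initial data.

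The main point needing care is that \eqref{4.7} is actually well defined, since $P_k$ and $Q_k$ involve division by $A_{12}^{(k)}$. We have $A_{12}^{(k)}(t) = \tilde u(t)\big(\chi_1\beta\lambda_k/(1+\lambda_k) - \mu_1 a_1\big)$, with $\tilde u>0$. The ratio $(A_{12}^{(k)})'/A_{12}^{(k)}$ equals $\tilde u'/\tilde u$, as computed in \eqref{4-int}, as long as the constant factor is nonzero. The factor vanishes only at the single value $\chi_1 = \mu_1 a_1(1+\lambda_k)/(\beta\lambda_k)$, where the derivation of \eqref{4.7} degenerates. In that exceptional case the first equation of \eqref{4.5} decouples to $(C_1^{(k)})' = A_{11}^{(k)} C_1^{(k)}$. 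I would handle it separately or exclude it, noting that the affine formula for $\mathcal{Q}_k$ extends continuously through it. Beyond this, only the continuity of $a_k,b_k$ (already given by Lemma \ref{l2}) and the minimum argument are used. I expect no other obstacle.
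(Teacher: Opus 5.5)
Your proposal is correct and follows essentially the same route as the paper. You define $\chi_c^{(k)}$ as the infimum of $\chi_1^{(k)}(t)=-b_k(t)/a_k(t)$ (you rightly note it is a minimum, by continuity), use $a_k<0$ to get $\mathcal{Q}_k(\chi_1,t)<0$ exactly when $\chi_1>\chi_1^{(k)}(t)$, extract an interval by continuity, and invoke Lemma \ref{l1-aux}. Your remark about the degenerate value $\chi_1=\mu_1a_1(1+\lambda_k)/(\beta\lambda_k)$, where $A_{12}^{(k)}\equiv 0$, is a point the paper passes over silently. It is harmless and needs no exclusion: since $(A_{12}^{(k)})'=(\tilde u'/\tilde u)\,A_{12}^{(k)}$ holds even when the constant factor vanishes, differentiating \eqref{4.5} still yields the first equation of \eqref{4.7} with $(A_{12}^{(k)})'/A_{12}^{(k)}$ read as $\tilde u'/\tilde u$.
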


\begin{proof}\hfill\break
\hspace*{1em} For a given $k \in \mathbb{N}^+$, we consider the equivalent formulation of system \eqref{4.5} given in \eqref{4.7}. We seek to apply Lemma \ref{l1-aux}, using the properties of $\mathcal{Q}_k$ proved in Lemma \ref{l2}. In particular, we define the threshold
\begin{equation}\label{4.9}
\chi_c^{(k)} := \inf_{t \in [0,T] }  \chi_1^{(k)} (t),
\end{equation}
where we recall for all $t \in [0,T]$, $\chi_1^{(k)}(t)$ denotes the unique solution to the equation $\mathcal{Q}_k(\chi_1,t) = 0$. Since $\mathcal{Q}_k(\chi_1,t) = a_k(t)\chi_1 + b_k(t)$, with $a_k(t)<0$ over $[0,T]$, then for any $t \in [0,T]$
$$
\mathcal Q_k(\chi_1,t)<0
\quad \text{if and only if} \quad
\chi_1>\chi_1^{(k)}(t).
$$
As a result, for any $\chi_1 > \chi_c^{(k)}$, there exists at least one time $t_0 \in [0,T]$ such that $\chi_1 > \chi_1^{(k)}(t_0)$, and thus $\mathcal{Q}_k(\chi_1,t_0)<0$. Moreover, due to the continuity of $\mathcal{Q}_k(\chi_1, \cdot)$ on $[0,T]$ asserted by Lemma \ref{l2}, there exists at least one interval $I_k\subset[0,T]$ containing $t_0$ such that $\mathcal{Q}_k(\chi_1,t)<0$, for all $t \in I_k$. Hence, by Lemma \ref{l1-aux}, the first component $C_1^{(k)}$ exhibits an exponential growth rate on $I_k$.
\end{proof}
\begin{remark}\label{r1}
For a given $\chi_1 > \chi_c^{(k)}$, the set of instability times associated with the $k$-th mode,
$$
I_k = I_k(\chi_1)
:=
\{\, t \in [0,T] : \mathcal{Q}_k(\chi_1,t)<0 \,\},
$$
is, in general, not a single interval. In general, it depends on the value of $\chi_1$ and of the function $\chi_1^{(k)}(t)$.
\end{remark}
A counterpart to Lemma \ref{l3} with respect to the $C_2^{(k)}$ equation can be similarly proved in the same way. We omit the details of the proof, as it follows the same steps, based on second-order formulation \eqref{4.7} and the solvability of $\widehat{\mathcal{Q}}_k(\chi_1,t) = 0$ for all $t \in [0,T]$ granted by Lemma \ref{l2}.

\begin{lemma}\label{l4}
Suppose that $f_1(t)$, $f_2(t) \in  C^1([0,T])$ are $T-$periodic functions fulfilling \eqref{0-hip1} and that $\big(\tilde{u}(t), \tilde{v}(t)\big)$ is a positive $T-$periodic globally attracting coexistence state of system \eqref{0.9} satisfying assumption \eqref{0-hip2}. Then, for all $k \in \mathbb{N}^+$, there exists $\widehat{\chi}_c^{~(k)}\in \mathbb{R}$ such that if $\chi_1 > \widehat{\chi}_c^{~(k)}$, the second component $C_2^{(k)}$ of the solution to system \eqref{4.5} has an exponential growth rate over a set of times $\widehat{I}_k \subset [0,T]$. 
\end{lemma}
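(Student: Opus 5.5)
The argument mirrors the proof of Lemma \ref{l3}, now applied to the second equation of the uncoupled formulation \eqref{4.7}. Fix $k \in \mathbb{N}^+$. Lemma \ref{l2} gives the affine representation $\widehat{\mathcal{Q}}_k(\chi_1,t) = a_k(t)\chi_1 + \widehat{b}_k(t)$, with $a_k, \widehat{b}_k$ continuous on $[0,T]$ and $a_k(t)<0$ throughout. It also gives the unique root $\widehat{\chi}_1^{~(k)}(t) = -\widehat{b}_k(t)/a_k(t)$. Because $a_k$ is continuous and never vanishes on the compact interval $[0,T]$, this root is a continuous real-valued function there. We therefore set
$$
\widehat{\chi}_c^{~(k)} := \inf_{t\in[0,T]} \widehat{\chi}_1^{~(k)}(t) = \min_{t \in [0,T]} \widehat{\chi}_1^{~(k)}(t),
$$
which is finite, so $\widehat{\chi}_c^{~(k)} \in \mathbb{R}$.

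The key step is the sign equivalence. Since $a_k(t)<0$, for each fixed $t$ we have $\widehat{\mathcal{Q}}_k(\chi_1,t)<0$ if and only if $\chi_1 > \widehat{\chi}_1^{~(k)}(t)$. Now take $\chi_1 > \widehat{\chi}_c^{~(k)}$ and pick a minimiser $t_0$ of $\widehat{\chi}_1^{~(k)}$. Then $\chi_1 > \widehat{\chi}_1^{~(k)}(t_0)$, so $\widehat{\mathcal{Q}}_k(\chi_1,t_0)<0$. Since $t \mapsto \widehat{\mathcal{Q}}_k(\chi_1,t)$ is continuous, this strict inequality holds on a relatively open interval $\widehat{I}_k \ni t_0$ in $[0,T]$. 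More generally, $\widehat{I}_k$ can be taken as the full set $\{t \in [0,T] : \widehat{\mathcal{Q}}_k(\chi_1,t)<0\}$, in analogy with Remark \ref{r1}.

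Finally, $C_2^{(k)}$ satisfies $y'' + \widehat{P}_k(t)y' + \widehat{Q}_k(t)y = 0$ with $\widehat{Q}_k<0$ on $\widehat{I}_k$. Lemma \ref{l1-aux} then gives exponential growth of $C_2^{(k)}$ on $\widehat{I}_k$ for any nonzero data.

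The main obstacle is well-definedness of the second-order reduction, not the threshold argument. The coefficients $\widehat{P}_k$ and $\widehat{Q}_k$ in \eqref{4.8} contain $(A_{21}^{(k)})'/A_{21}^{(k)}$, and $A_{21}^{(k)} = \tilde v(t)\bigl(\chi_2\alpha\frac{\lambda_k}{1+\lambda_k} - \mu_2 a_2\bigr)$. Positivity of $\tilde v$ in the coexistence case handles the first factor. I would further note that the second factor is a $t$-independent constant, so whenever it is nonzero we get $(A_{21}^{(k)})'/A_{21}^{(k)} = \tilde v'/\tilde v = \mu_2(1+f_2-\tilde v - a_2\tilde u)$. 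This is continuous and independent of $\chi_1$, which gives the continuity of $\widehat{b}_k$ used above. The degenerate case, where that constant vanishes for some $k$, would have to be excluded; the paper implicitly assumes the reduction \eqref{4.8} is valid for each such $k$, so I would record this as a standing nondegeneracy assumption rather than prove it.
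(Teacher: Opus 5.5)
Your proposal is correct and follows the route the paper intends. The paper omits the proof of Lemma \ref{l4} and says it repeats the argument of Lemma \ref{l3} with $\widehat{\mathcal{Q}}_k$ in place of $\mathcal{Q}_k$: the threshold $\widehat{\chi}_c^{~(k)}$ is the infimum in \eqref{4.9-2}, the sign equivalence comes from $a_k<0$, continuity in $t$ gives an interval, and Lemma \ref{l1-aux} gives the growth. Your two extra remarks are valid points the paper leaves implicit: the infimum is a finite minimum by continuity on $[0,T]$, and the reduction \eqref{4.8} tacitly requires the $t$-independent factor $\chi_2\alpha\frac{\lambda_k}{1+\lambda_k}-\mu_2 a_2$ of $A_{21}^{(k)}$ to be nonzero.
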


As in Lemma \ref{l3}, for each $k \in \mathbb{N}^+$, $\widehat{\chi}_c^{~(k)}$ is defined as
\begin{equation}\label{4.9-2}
\widehat{\chi}_c^{~(k)} := \inf_{t \in [0,T] } \widehat{ \chi}_1^{\,(k)} (t).
\end{equation}
We also emphasize that as described in Remark \ref{r1}, in general for a given $k \in \mathbb{N}^+$ and $\chi_1 > \widehat{\chi}_c^{~(k)}$ the set of instability times $\widehat{I}_k$ for $C_2^{(k)}$ need not be a single interval.
\\\\
Lastly, for the semitrivial state $\big(\tilde{u}(t),0\big)$, as a consequence of Lemma \ref{l2-2}, only a similar result to Lemma \ref{l3} can be proved, ruling out any instability arising from the growth of $C_2^{(k)}$. Again, we omit the details of the proof, as it follows the same structure.
\begin{lemma}\label{l5} Assume $f_1(t)$, $f_2(t) \in  C^1([0,T])$ are $T-$periodic functions satisfying \eqref{0-hip1} and that $\big(\tilde{u}(t), \tilde{v}(t)\big) = \big( \tilde{u}(t), 0\big)$ is a globally attracting $T-$periodic semitrivial state of system \eqref{0.9} verifying \eqref{0-hip2}. Then, for all $k \in \mathbb{N}^+$, there exists a value $\chi_c^{(k)}\in \mathbb{R}$ such that if $\chi_1 > \chi_c^{(k)}$, the first component $C_1^{(k)}$ of the solution to system \eqref{4.5} has an exponential growth rate over a set of times $I_k \subset [0,T]$. 
\end{lemma}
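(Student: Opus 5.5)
The proof follows the pattern of Lemma \ref{l3}, with Lemma \ref{l2-2} playing the role of Lemma \ref{l2}. Fix $k \in \mathbb{N}^+$ and work with the second-order formulation \eqref{4.7}, with coefficients now given by \eqref{4.8-2}. Two facts make the first equation well defined and reduce everything to the previous case.

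First, $A_{12}^{(k)}(t) = \tilde{u}(t)\big(\chi_1\beta\frac{\lambda_k}{1+\lambda_k} - \mu_1 a_1\big)$, so $A_{12}^{(k)}$ factors as $\tilde u(t)$ times a constant. Whenever that constant is nonzero, the quotient $\big(A_{12}^{(k)}\big)'/A_{12}^{(k)} = \tilde u'/\tilde u = \mu_1(1+f_1-\tilde u)$ is independent of $\chi_1$. This uses that $\tilde u$ solves the decoupled logistic equation, which is what \eqref{0.9} reduces to when $\tilde v \equiv 0$. Second, Lemma \ref{l2-2} then gives $\mathcal{Q}_k(\chi_1,t) = a_k(t)\chi_1 + b_k(t)$ with $a_k$, $b_k$ continuous and $a_k<0$ on $[0,T]$. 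Here $a_k(t)=\tilde u(t)\frac{\alpha\lambda_k}{1+\lambda_k}\big(-\lambda_k+\mu_2(1+f_2-a_2\tilde u)\big)$ is \eqref{a-k-eq} with $\tilde v=0$. Its negativity needs $\tilde u>0$, which holds for the periodic logistic solution since $1+f_1>0$.

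Next, define $\chi_1^{(k)}(t) := -b_k(t)/a_k(t)$. This is continuous on the compact set $[0,T]$, so it is bounded, and
$$
\chi_c^{(k)} := \min_{t\in[0,T]} \chi_1^{(k)}(t) \in \mathbb{R}.
$$
Because $a_k<0$, we have $\mathcal{Q}_k(\chi_1,t)<0$ if and only if $\chi_1>\chi_1^{(k)}(t)$. So for $\chi_1>\chi_c^{(k)}$, pick $t_0$ attaining the minimum; then $\mathcal{Q}_k(\chi_1,t_0)<0$. By continuity of $t\mapsto \mathcal{Q}_k(\chi_1,t)$, there is a relatively open interval $I_k\ni t_0$ in $[0,T]$ on which $\mathcal{Q}_k<0$. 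Lemma \ref{l1-aux} then yields exponential growth of $C_1^{(k)}$ on $I_k$.

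The main obstacle is the degenerate value $\chi_1 = \mu_1 a_1(1+\lambda_k)/(\beta\lambda_k)$, where $A_{12}^{(k)}\equiv 0$ and the reduction \eqref{4.7} with \eqref{4.8-2} is not available. I would handle it directly. At that value the first equation of \eqref{4.5} decouples into $C_1' = A_{11}^{(k)}(t)C_1$, since $A_{21}^{(k)}\equiv0$ as well. Moreover $A_{11}^{(k)}$ is increasing in $\chi_1$ and exceeds $-\lambda_k+\mu_1(1+f_1-2\tilde u)$ by the positive term $\chi_1\alpha\tilde u\frac{\lambda_k}{1+\lambda_k}$. So I would check whether this value lies above $\chi_c^{(k)}$. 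If it does, I would either show $A_{11}^{(k)}>0$ somewhere, giving direct exponential growth on a subinterval, or exclude this single value, noting that it does not affect the affine formula for $\mathcal Q_k$, which extends continuously. I would also point out, as in the coexistence case, that for $a_1=0$ this value is $0$ and the issue is trivial.
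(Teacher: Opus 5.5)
Your proposal is correct and is essentially the paper's argument: the paper proves Lemma~\ref{l5} by rerunning the proof of Lemma~\ref{l3} with Lemma~\ref{l2-2} in place of Lemma~\ref{l2}, taking $\chi_c^{(k)}=\inf_{t\in[0,T]}\chi_1^{(k)}(t)$, which is your minimum. The degenerate value $\chi_1=\mu_1a_1(1+\lambda_k)/(\beta\lambda_k)$ is not discussed in the paper and needs no case split: since $A_{12}^{(k)}=c_k\tilde u$ with $c_k$ constant, the quantity $z:=A_{12}^{(k)}C_2^{(k)}=(C_1^{(k)})'-A_{11}^{(k)}C_1^{(k)}$ satisfies $z'=\big(\tilde u'/\tilde u+A_{22}^{(k)}\big)z+A_{12}^{(k)}A_{21}^{(k)}C_1^{(k)}$ without dividing by $c_k$, so $C_1^{(k)}$ solves \eqref{4.7} with $\tilde u'/\tilde u$ in place of $(A_{12}^{(k)})'/A_{12}^{(k)}$ for every $\chi_1$, Lemma~\ref{l1-aux} applies exactly as before, and neither of your fallbacks is needed (excluding the value would weaken the statement, and a separate sign argument for $A_{11}^{(k)}$ is unnecessary).
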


The proof of Theorem \ref{t1} now becomes an immediate consequence of Lemmas \ref{l3} and \ref{l4} in the case of positive coexistence, and of Lemma \ref{l5} for the semitrivial state.
\\\\
\textit{Proof of Theorem \ref{t1}}
\\
\hspace*{1em} We first consider the case in which $\big(\tilde{u}(t), \tilde{v}(t) \big)$ represents a coexistence state. Then, defining 
$\chi_{\text{crit}}$ as 
\begin{equation}\label{chi-def1}    
\chi_{\text{crit}} := \inf_{k \in \mathbb{N}^+} \left \{ \chi_c^{(k)}, \widehat{\chi}_c^{~(k)} \right\},
\end{equation}
where, for each $k \in \mathbb{N}^+$, $\chi_c^{(k)}$ and $ \widehat{\chi}_c^{~(k)}$ are as respectively defined in Lemma \eqref{4.9} and \eqref{4.9-2}. Then, for any $\chi_1 > \chi_{\text{crit}}$, we introduce
$$\mathcal{K}_1 := \left \{\, k \in \mathbb{N}^+, ~ \chi_1> \chi_c^{(k)} \,\right \} , \quad \mathcal{K}_2 := \left \{\, k \in \mathbb{N}^+, ~ \chi_1> \widehat{\chi}_c^{~(k)} \,\right \}.$$
In this way, $\mathcal{K} := \mathcal{K}_1 \cup \mathcal{K}_2$, which is clearly nonempty by definition of $\chi_{\text{crit}}$, is the set of unstable modes. As a result, by considering the instability time sets
\begin{equation}\label{4p-1.2}
\mathcal{I}_1 := \bigcup_{k\in \mathcal{K}_1} I_k, \quad \mathcal{I}_2 := \bigcup_{k \in\mathcal{K}_2} \widehat{I}_k,
\end{equation}
we may define 
\begin{equation}\label{4p-2}
    \mathcal{I} := \mathcal{I}_1 \cup \mathcal{I}_2.
\end{equation}
Consequently, by construction of $\mathcal{I}$, for all $t \in \mathcal{I}$, there exists at least one mode $k \in \mathcal{K}$ such that the $k-$th modal amplitude $\big(C_1^{(k)}(t), C_2^{(k)}(t)\big)$ has an exponential growth rate at time $t$ in at least one of its components. This implies the transient linear instability of the state $(\tilde{u}, \tilde{v})$ over $\mathcal{I}$. 
\\\\
For the case of a semitrivial state, if $\big(\tilde{u}(t),\tilde{v}(t)\big) = \big(\tilde{u}(t),0\big)$, then the threshold $\chi_{\text{crit}}$ can be similarly defined, taking into account that by Lemma \ref{l2-2}, $\widehat{\mathcal{Q}}_k(\chi_1,t) >0$ for all $k \in \mathbb{N}^+$, all $t>0$ and all $\chi_1 \in \mathbb{R}$. Thus, the only instability that can be characterized by Lemma \ref{l1-aux} comes from $\chi_c^{(k)}$, and consequently we define
\begin{equation}\label{chi-def2}    
\chi_{\text{crit}} := \inf_{k \in \mathbb{N}^+} \left \{ \chi_c^{(k)} \right\}.
\end{equation}
All the previous steps can be reproduced here, but only for the growth of the modes $C_1^{(k)}$, as in this case $\mathcal{K} = \mathcal{K}_1$ and $\mathcal{I} = \mathcal{I}_1$.
\qed
\begin{remark}\label{r2}
The significance of Theorem \ref{t1} lies in the fact that the instability mechanism is intrinsically linked to the temporal periodicity of the spatially homogeneous state. Indeed, if  $\chi_1 >\chi_\text{crit}$, then there exists a nonempty set of times $\mathcal{I} \subset [0,T]$ on which at least one spatial mode exhibits exponential growth. Since all coefficients in the linearized problem are $T-$periodic, the same instability mechanism reappears during every subsequent period. Consequently, the state $(\tilde{u}, \tilde{v})$ is linearly unstable over each translated set of times
$$
\mathcal{I}+kT:=\left\{t+kT ~:~ t\in \mathcal{I}\right\},\quad k\in \mathbb{N}_0
$$
In this sense, Theorem \ref{t1} establishes the existence of a recurrent sequence of transient instability time sets generated by the external periodicity of functions $f_1$ and $f_2$.
\end{remark}

\section{Numerical examples}\label{s5}

After establishing the proof of Theorem \ref{t1} in the previous section, we now present different numerical simulations aimed at identifying the threshold value $\chi_{\text{crit}}$ and its associated instability time set. This allows us to explore the periodic arising of different spatial patterns that are generated by the dynamics of system \eqref{0.8}. We consider both cases for the spatially homogeneous state $(\tilde{u}, \tilde{v})$, periodic coexistence, where both components are positive, and competitive exclusion, corresponding to a semitrivial state.

\subsection{Periodic coexistence}
We begin by considering the case in which the ODE system \eqref{0.9} admits a unique, positive, globally attracting, periodic coexistence state, $\big(\tilde{u}(t), \tilde{v}(t) \big)$. To ensure its existence and uniqueness, we assume that $a_i$ and $f_i$ are such that \eqref{1.5} holds. 
\\\\
We analyze two different examples. In the first, the functions $f_i$, representing the time variation of the carrying capacities, are in phase, attaining their maxima and minima simultaneously. In the second, we consider out-of-phase dynamics, so that the most favorable environmental conditions for one species coincide with the least favorable conditions for the other.
\subsubsection{Example I: In-phase carrying capacities}\label{s4e1}
To begin the first example with synchronized carrying capacities, we consider a large enough one-dimensional domain, given by $\Omega = (0,50)$, and the following parameters
\begin{equation}\label{5.1}
\begin{gathered}
a_1 = 0.25, \quad a_2 = 0.2, \quad \mu_1 = 1, \quad \mu_2 = 2.5,
\quad \alpha = 1, \quad \beta = 0.8,\\[1.5ex]
\chi_2 = 1.5, \quad 1+f_1(t) = 1-0.6\sin(0.25t),
\quad 1+f_2(t) = 1-0.5\sin(0.25t)\,.
\end{gathered}
\end{equation}
Parameters \eqref{5.1} represent for instance two different competing phenotypes of a bacterial species. Phenotype I, corresponding to the first species $u$ presents a slower reproduction rate $\mu_1$ than Phenotype II, which can proliferate faster. Both have relatively similar (weak) competition coefficients $a_1$, $a_2$, and self-production rates of the chemoattractant $\alpha$ and $\beta$. The environmental conditions, encoded by the time-dependent carrying capacities $1+f_i$ are synchronized, and affect both phenotypes in a similar way, with a period of $T = 8\pi$. Phenotype I is however slightly more responsive to environmental changes, attaining its maximum carrying capacity of $1+0.6$ at times $t = 6\pi + kT$, $k \in \mathbb{N}_0$ and minimum $0.4$ at $t = 2\pi + kT$, $k \in \mathbb{N}_0$, compared to phenotype II, whose carrying capacity oscillates between 0.5 and 1.5. Phenotype II presents a moderate chemotactic sensitivity, with a coefficient $\chi_2 = 1.5$, and we recall that the diffusion coefficient of both phenotypes is normalized to 1.
\\\\
Our aim is thus to determine the critical chemotactic sensitivity coefficient of the first phenotype that leads to the instability of the periodic coexistence state $\big(\tilde{u}(t), \tilde{v}(t)\big)$. To do so, we first verify that conditions \eqref{1.5} are satisfied by our choice of $a_i$ and $f_i$, in order to ensure that such a unique coexistence state exists and that it is globally attracting. In particular, it is direct to check that
$$
1+f_1^L = 0.4 >0.25 \cdot 1.5 =a_1\big(1+f_2^M\big) \quad \text{and} \quad 1+f_2^L = 0.5 > 0.2 \cdot 1.6 = a_2\big(1+f_1^M\big).
$$
Moreover, we numerically verify that hypothesis \eqref{0-hip2} holds, to grant the solvability of equations \eqref{4-eq}, leading to the thresholds $\chi_c^{(k)}$ and $\widehat{\chi}_1^{~(k)}$ for each mode. To approximate the state $\big(\tilde{u}(t), \tilde{v}(t)\big)$, which is the unique global attractor of system \eqref{0.9}, we choose arbitrary initial data and numerically integrate the system over successive periods. We then evaluate the solution at times $t_k =kT$, $k \in \mathbb{N}^+$, until the difference between two consecutive evaluations falls below a prescribed tolerance. This provides us an approximation of initial value $(u_0,v_0)$ that generates the periodic coexistence state $\big(\tilde{u}(t), \tilde{v}(t)\big)$. 
\\\\
Computing $\big(\tilde{u}(t), \tilde{v}(t)\big)$ in this way, we verify that \eqref{0-hip2} holds, as depicted in Figure \ref{fig:f1}. In particular, we find that the red curve, corresponding to $\alpha \big(1+f_2(t)\big)$ lies below the blue curve, $\alpha a_2 \tilde{u}(t) + \big(2\alpha - a_2 \beta\big) \tilde{v}(t)$, for all times.\\
\begin{figure}[htp]
    \centering
    \includegraphics[width=10 cm]{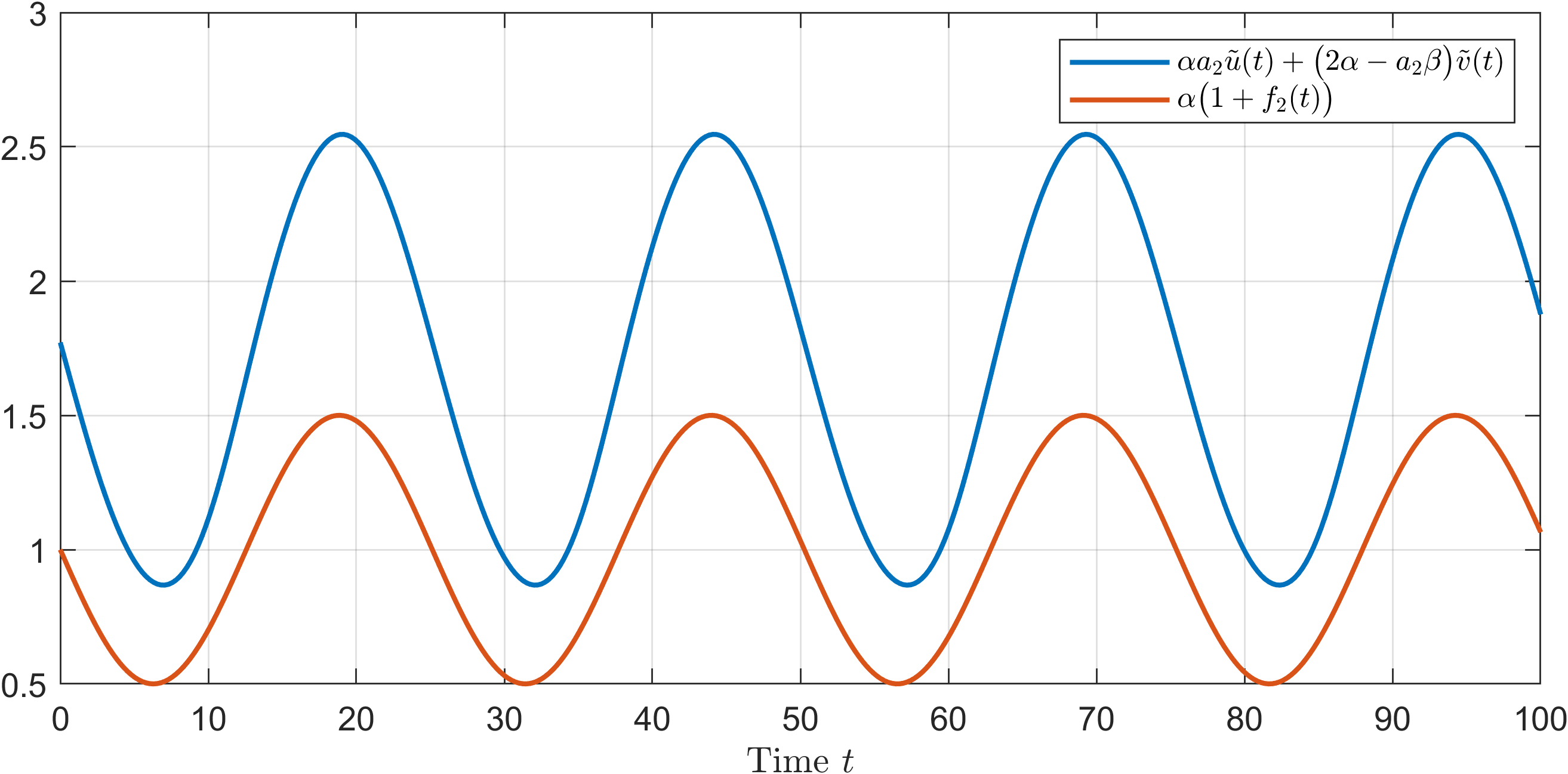}
    \caption{Numerical verification of hypothesis \eqref{0-hip2} for $(\tilde{u},\tilde{v})$, the unique coexistence state associated to the parameter set \eqref{5.1}.}
    \label{fig:f1}
\end{figure}\\
The verification of hypothesis \eqref{0-hip2} allows us to compute the values of $\chi_1^{(k)}(t)$ and $\widehat{\chi}_1^{~(k)}(t)$, as defined in Lemma \ref{l2}. Specifically, we discretize the interval $[0,T] = [0,8\pi]$ by considering 500 evenly spaced nodes, over which we solve both linear equations \eqref{4-eq}. This is done for all modes $k \in \{1, \dots, 30\}$, and the resulting functions are plotted in Figure \ref{fig:f2}. The values of $\chi_1^{(k)}(t)$ are shown in the top row, while those of $\widehat{\chi}_1^{~(k)}(t)$ are displayed in the bottom row. For clarity, only a selected subset of modes is represented, namely $k \in \{1,\dots,5\}$ on the left panel, $k \in \{6, \dots, 12\}$ in the center, and $k \in \{18, \dots, 22\}$ on the right.\\
\begin{figure}[htp]
    \centering
    \includegraphics[width=15 cm]{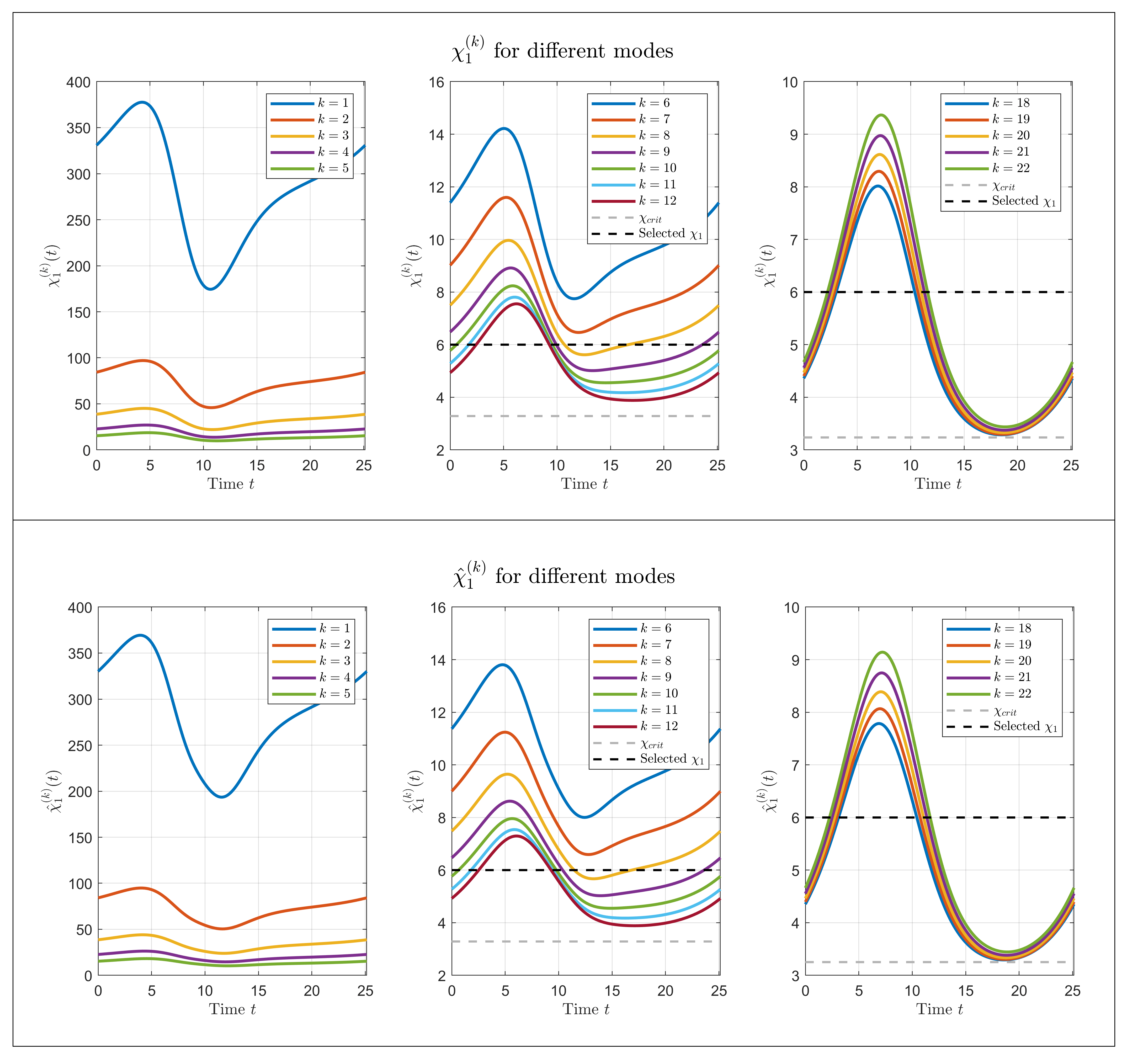}
    \caption{Values of $\chi_1^{(k)}(t)$ and $\widehat{\chi}_1^{~(k)}(t)$ for different modes $k$ for the parameter set \eqref{5.1}. The values of $\chi_\text{crit}\approx 3.287$, the instability threshold, and $\chi_1 = 6$, the parameter selected for the numerical simulations, are also represented for reference.}
    \label{fig:f2}
\end{figure}\\
For all modes $k$ considered, the functions $\chi_1^{(k)}(t)$ and $\widehat{\chi}_1^{~(k)}(t)$ exhibit very similar behavior. The leading modes require very large values of $\chi_1$ to be destabilized. In particular, both $\chi_1^{(1)}(t)$ and $\widehat{\chi}_1^{~(1)}(t)$ lie above $200$ for the majority of $[0,T]$, while the corresponding values for $k = 2$ they are of order $100$. As $k$ increases, the magnitude of $\chi_1^{(k)}(t)$ and $\widehat{\chi}_1^{~(k)}(t)$, decreases rapidly at first, and then begins to stabilize for intermediate vales of $k$. In particular, as shown on the central panels, for $k \in \{8, \dots, 12\}$, the graphs of $\chi_1^{(k)}(t)$ and $\widehat{\chi}_1^{~(k)}(t)$ become progressively closer. This is visible again on the right-hand panels, for $k \in \{18,\dots, 22\}$, although with the opposite behavior, as for those modes, $\chi_1^{(k)}(t)$ and $\widehat{\chi}_1^{~(k)}(t)$ begin to increase with $k$.
\\\\
To better illustrate this growth in the magnitude of $\chi_1^{(k)}(t)$ and $\widehat{\chi}_1^{~(k)}(t)$ with respect to $k$, Figure \ref{fig:f2-2} depicts the quantities
$$
\chi_c^{(k)}:= \inf_{t\in[0,T]} \chi_1^{(k)}(t), \quad \widehat{\chi}_c^{\,(k)} :=\inf_{t\in[0,T]} \widehat{\chi}_1^{~(k)}(t),
$$
as defined in \eqref{4.9} and \eqref{4.9-2}. For clarity, only the modes $k \in \{8, \dots, 30\}$ are represented.\\
\begin{figure}[htp]
    \centering
    \includegraphics[width=11 cm]{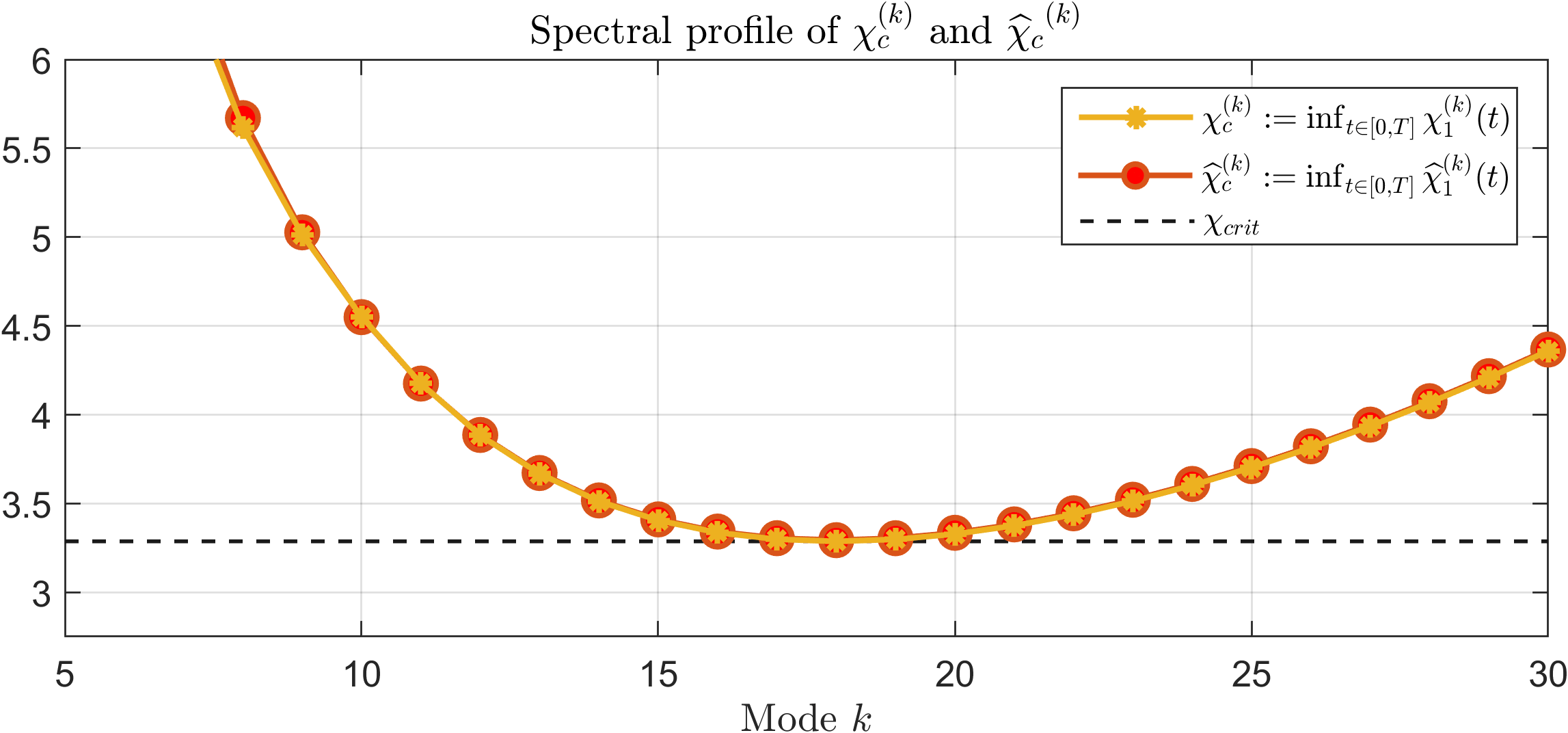}
    \caption{Modal distribution of $\chi_c^{(k)}$ and $\widehat{\chi}_c^{\,(k)}$ for $k \in \{8, \dots, 30\}$, computed for the parameter set \eqref{5.1}.}
    \label{fig:f2-2}
\end{figure}
\\The similarity of the curves $\chi_1^{(k)}(t)$ and $ \widehat{\chi}_1^{~(k)}(t)$ is clearly visible here, as the yellow asterisks, indicating the values of $\chi_c^{(k)}$ lie very close to the red circles, corresponding to $\widehat{\chi}_c^{\,(k)}$. In particular, the threshold $\chi_{\text{crit}}$, defined in \eqref{chi-def1}, can be identified by the minimum value of the results shown in Figure \ref{fig:f2-2}. This reveals that  
$$
\chi_\text{crit}  = \inf_{k \in \mathbb{N}^+} \left \{ \chi_c^{(k)}, \widehat{\chi}_c^{~(k)} \right\} =\widehat{\chi}_c^{~(18)}\approx 3.287,
$$
Hence, the first modal amplitude to become linearly unstable is $C_2^{(18)}$, corresponding to the eighteenth spatial mode of $v$.
\\\\
Referring back to Figure \ref{fig:f2}, we represent the value of $\chi_\text{crit}$ in gray dashed lines on the center and right panels. As it can be seen, the minimum is precisely attained in the bottom right panel, for the mode $k=18$, shortly before $t = 20$. As $\chi_c^{(18)}$ is also very close to the infimum, a similar behavior is found on the top right panel for $\chi_1^{(18)}(t)$. Consequently, Theorem \ref{t1} implies that, for any $\chi_1 > \chi_{\text{crit}}$, there exists an instability time set $\mathcal{I} \subset [0,T]$ and a collection of modes $\mathcal{K}$ over which the periodic coexistence state $(\tilde{u},\tilde{v})$ becomes linearly unstable. 
\\\\
In the previous phenotypical interpretation of the model, the value $\chi_{\text{crit}} \approx 3.287$ represents the minimum chemotactic sensitivity required for Phenotype I to destabilize the homogeneous coexistence state $(\tilde{u}, \tilde{v})$. Since this threshold is more than twice the value of the chemotactic sensitivity of Phenotype II ($\chi_2 = 1.5$), instability occurs only when Phenotype I exhibits a substantially stronger tendency to move along chemoattractant gradients. The instability mechanism can therefore be interpreted as arising from the interaction between a highly chemotactic-responsive but moderately reproducing phenotype, and a less motile phenotype with a higher reproduction rate. Consequently, if the chemotaxis sensitivity of Phenotype I is above $\chi_{\text{crit}}$, Theorem \ref{t1} predicts the emergence of transient bacterial aggregations during specific portions of each environmental cycle, which reappear on each subsequent cycle due to the environmental periodicity.
\\\\
With the aim of verifying this behavior numerically by computing the solution to system \eqref{0.8}, we take for instance $\chi_1 = 6 > \chi_{\text{crit}}$. For reference, this value is indicated by the black dashed line in the central and right panels of Figure \ref{fig:f2}. The corresponding instability set $\mathcal{I} \subset [0,T]$ can then be identified directly from the graphs of $\chi_1^{(k)}$ and $\widehat{\chi}_1^{~(k)}$, consisting of all times for which at least one of these curves lies below the level $\chi_1 = 6$. In particular, we numerically obtain that
\begin{equation}\label{5-i1}
    \mathcal{I} = [0,t_1) \cup (t_2,T] \approx [0,3.412) \cup ( 9.219,25.132].
\end{equation}
Therefore, Theorem \ref{t1} predicts the arise of a transient instability during the initial and final portions of each environmental cycle. Moreover, as stated in Remark \ref{r2}, the instability set is translated over each environmental cycle, and since the interval $(t_2,T]$ from the first period is followed by the interval $[T, t_1+T)$, we find that for the complete evolution problem over $[0,\infty)$, the coexistence state $(\tilde{u},\tilde{v})$ is linearly unstable over the set
\begin{equation}\label{5.2}
\mathcal{I}_\infty := [0, t_1) \cup (t_2,t_1+T) \cup (t_2+T, t_1+2T) \cup (t_2 + 2T, t_1+3T) \cup \dots    
\end{equation}
We remark again that, since Lemma \ref{l1-aux} only provides a sufficient condition for the growth of the modal amplitudes, stability is not ensured over $[0,\infty) \setminus \mathcal{I}_\infty$. In addition to $\mathcal{I}$, we also approximate the sets $\mathcal{I}_1$ and $\mathcal{I}_2$ defined in \eqref{4p-1.2}, which collect the times at which at least one modal amplitude $C_1^{(k)}$ or $C_2^{(k)}$, respectively, has an exponential growth rate. Therefore $\mathcal{I}_1$ and $\mathcal{I}_2$ respectively characterize the sets of transient instability of $u$ and $v$. A numerical approximation yields
\begin{equation}\label{5-i2}
    \mathcal{I}_1 = [0, 3.283) \cup (9.219,25.132], \quad \mathcal{I}_2 = [0, 3.412) \cup (9.219, 25.132].
\end{equation}
As we see, given the high similarity of $\chi_1^{(k)}(t)$ and $\widehat{\chi}_1^{~(k)}(t)$, both instability sets $\mathcal{I}_1$ and $\mathcal{I}_2$ are very similar, and in fact their second half coincides. However, $u$ may loose instability at $t = 3.283$, just shortly before $v$, at $t = 3.412$. With the aim of extending the analysis over more than one environmental cycle, in the same way that $\mathcal{I}_\infty$ was defined in \eqref{5.2}, we consider the corresponding analogous $\mathcal{I}_\infty^{(1)}$ and $\mathcal{I}_\infty^{(2)}$ for $\mathcal{I}_1$ and $\mathcal{I}_2$, respectively.
\\\\
To investigate the relationship between the environmental conditions and the transient source of instability, Figure \ref{fig:f3} shows the carrying capacities $1+f_1$ and $1+f_2$ and the periodic coexistence states $(\tilde{u},\tilde{v})$, aside the instability sets $\mathcal{I}_\infty^{(1)}$ and $\mathcal{I}_\infty^{(2)}$, represented in red over the time axis. The complement sets $[0,\infty) \setminus \mathcal{I}_\infty^{(1)}$ and $[0,\infty) \setminus \mathcal{I}_\infty^{(2)}$ ---the only time sets in which linear stability may hold--- are represented in green. Here and below, we refer to the complement sets $[0,\infty) \setminus \mathcal{I}_\infty^{(i)}$ as non-instability sets. This should not be interpreted as implying stability on $[0,\infty)\setminus\mathcal{I}_\infty^{(i)}$, rather, it only indicates that the instability criterion provided by Lemma \ref{l1-aux} does not detect exponential growth over them. Nevertheless, any intervals on which linear stability may occur must necessarily be contained in these non-instability sets.\\
\begin{figure}[htp]
    \centering
    \includegraphics[width=14 cm]{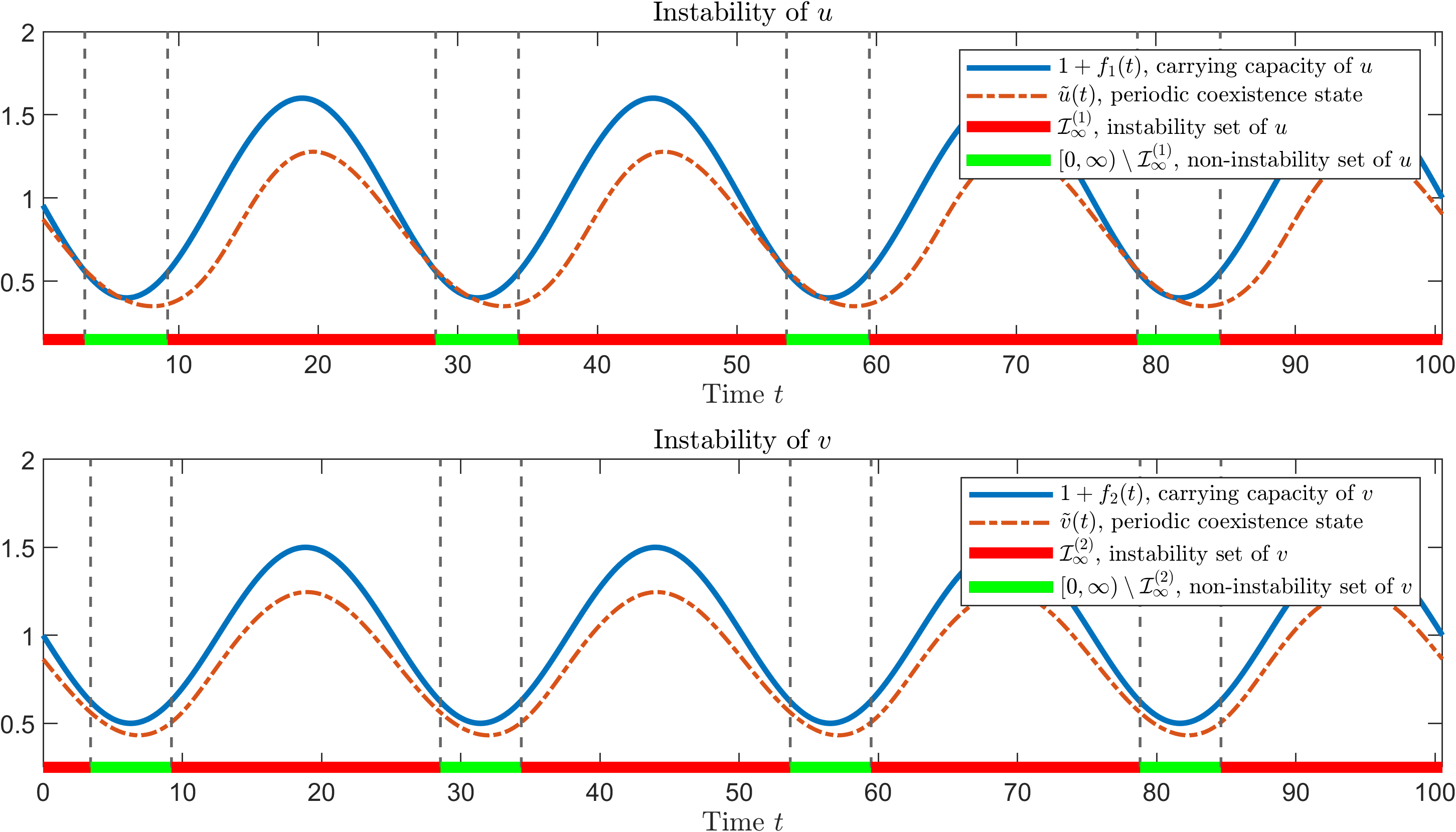}
    \caption{Effect of environmental conditions on the transient instability of $u$ and $v$, computed with parameters \eqref{5.1} and $\chi_1 = 6>\chi_{\text{crit}}$. Top panel: in blue $1+f_1(t)$, the carrying capacity of the first species; in red dashed line $\tilde{u}(t)$, the first component of the periodic coexistence state; on the time axis, in red $\mathcal{I}_\infty^{(1)}$, the extended instability set of $u$, and in green its complement $[0,\infty) \setminus \mathcal{I}_\infty^{(1)}$. Bottom panel: analogous for $v$, with instability set $\mathcal{I}_\infty^{(2)}$.}
    \label{fig:f3}
\end{figure}\\
The results seem to indicate how the instability sets $\mathcal{I}_\infty^{(1)}$ and $\mathcal{I}_\infty^{(2)}$ occur in phases where the respective carrying capacities $1+f_1$ and $1+f_2$ attain sufficiently large values, while on the contrary, the only possible stability sets, $[0,\infty) \setminus \mathcal{I}_\infty^{(1)}$ and $[0,\infty) \setminus \mathcal{I}_\infty^{(2)}$ are observed only near their minima. This could suggest that instability is only detected when the environmental conditions are sufficiently favorable for growth, and consequently the populations are not strongly constrained by low carrying capacities. However, as we shall see in Example II and in more detail in Section \ref{s6}, this is not generally true, and there is a particular balance that governs the location of the instability sets, rather than directly coinciding with high-enough carrying capacities. 
\\\\
After the analytical characterization of the transient stability provided by the sets $\mathcal{I}$, $\mathcal{I}_1$ and $\mathcal{I}_2$, we next turn to the simulation of the complete system \eqref{0.8} in order to observe wether such instability can give rise to recurrent spatial patterns. In particular, a second-order finite-difference scheme is used, with constant grid spacing $\Delta x = 0.05$ and time step $\Delta t = 2 \cdot 10^{-3}$. As initial data, we take a small perturbation of $\big(\tilde{u}(0), \tilde{v}(0)\big)$, given for instance by
\begin{equation}\label{5.3}
    u(x,0) = \tilde{u}(0) + \varepsilon \sin(\pi x/50), \quad v(x,0) = \tilde{v}(0) + \varepsilon \cos(\pi x/50),
\end{equation}
for $\varepsilon = 0.05$. This allows us to compare the obtained solution with the periodic coexistence state $(\tilde{u}, \tilde{v})$, which is globally attracting for the non-diffusive and non-chemotactic system \eqref{0.9}. The results are depicted in Figure \ref{fig:f4}, with the left panel showing the comparison $u - \tilde{u}$, the center panel portraying $v - \tilde{v}$, and the right panel $w - \tilde{w}$, where we recall that $\tilde{w}(t) := \alpha \tilde{u}(t) + \beta \tilde{v}(t)$. \\
\begin{figure}[htp]
    \centering
    \includegraphics[width=17 cm]{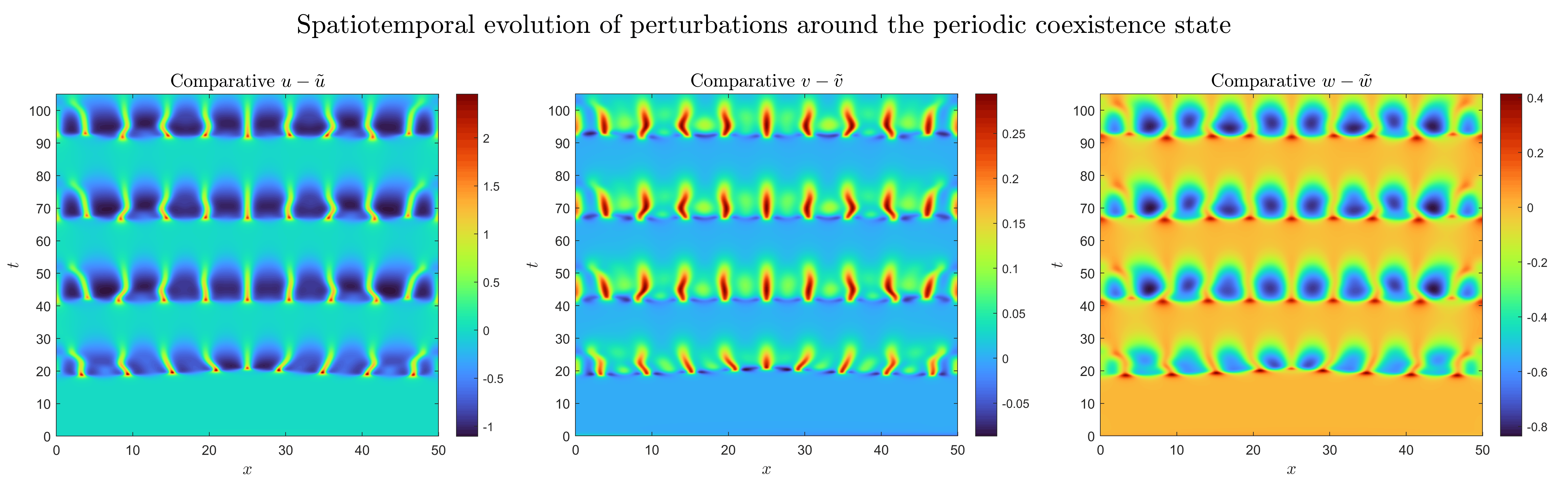}
    \caption{Comparison $u - \tilde{u}$, $v - \tilde{v}$, and $w - \tilde{w}$, where $(u,v,w)$ is the solution to system \eqref{0.8} with parameters \eqref{5.1}, $\chi_1 = 6 > \chi_{\text{crit}}$ and initial data \eqref{5.3}; $(\tilde{u}, \tilde{v})$ is the unique periodic coexistence state of system \eqref{0.9}; and $\tilde{w}(t) := \alpha \tilde{u}(t) + \beta \tilde{v}(t)$.}
    \label{fig:f4}
\end{figure}\\
As it can be observed on the three panels, after an initial period during which the solution remains close to the spatially homogeneous state, lasting until shortly before $t = 20$, the three components $u$, $v$ and $w$ develop highly structured spatial patterns. For $u$ and $v$, these take the form of nine symmetric population aggregates (plus two, one on each side of the boundary) that rise above the equilibrium levels $\tilde{u}$ and $\tilde{v}$, concentrating the majority of the population. In the case of $u$, the larger chemotaxis sensitivity ($\chi_1 = 6 > 1.5 = \chi_2$) leads to significantly stronger aggregations than those observed for $v$, with the maximum deviation from $\tilde{u}$ exceeding $2$, while the maximum difference between $v$ and $\tilde{v}$ is one order below, at $0.25$. With respect to $w$, the highest concentrations of the chemoattractant occur precisely in the regions where the population clusters are located, reflecting the combined effects of chemotactic attraction and self-production.
\\\\
This instability period, spanning approximately from $t = 20$ to $t = 30$, is next followed by a second stable regime, where spatial heterogeneities decay and the solution returns close to the homogeneous periodic state $(\tilde{u}, \tilde{v}, \tilde{w})$. At approximately $t=40$, spatial heterogeneities begin to reappear, giving rise to a new phase of pattern formation, somewhat longer than the first one. This alternation between stable and unstable periods, with recurrent intervals of strong aggregation, is observed until the end of the simulation, after four environmental cycles. Only the first unstable period is observed to be shorter than the other ones, of approximately equal length.
\\\\
To better analyze the effect of the instability sets $\mathcal{I}_\infty^{(1)}$ and $\mathcal{I}_\infty^{(2)}$ and their complements in the emergence of spatial heterogeneities, the left panel of Figure \ref{fig:f5} represents the temporal evolution of $\max_{x \in \Omega} u(x,t)$ and $\max_{x \in \Omega} v(x,t)$, aside the periodic coexistence state $(\tilde{u},\tilde{v})$, in dotted yellow line. The instability sets $\mathcal{I}_\infty^{(1)}$ and $\mathcal{I}_\infty^{(2)}$ are also represented on the time axis in red, as well as their complements, in green. \\
\begin{figure}[htp]
    \centering
    \includegraphics[width=16 cm]{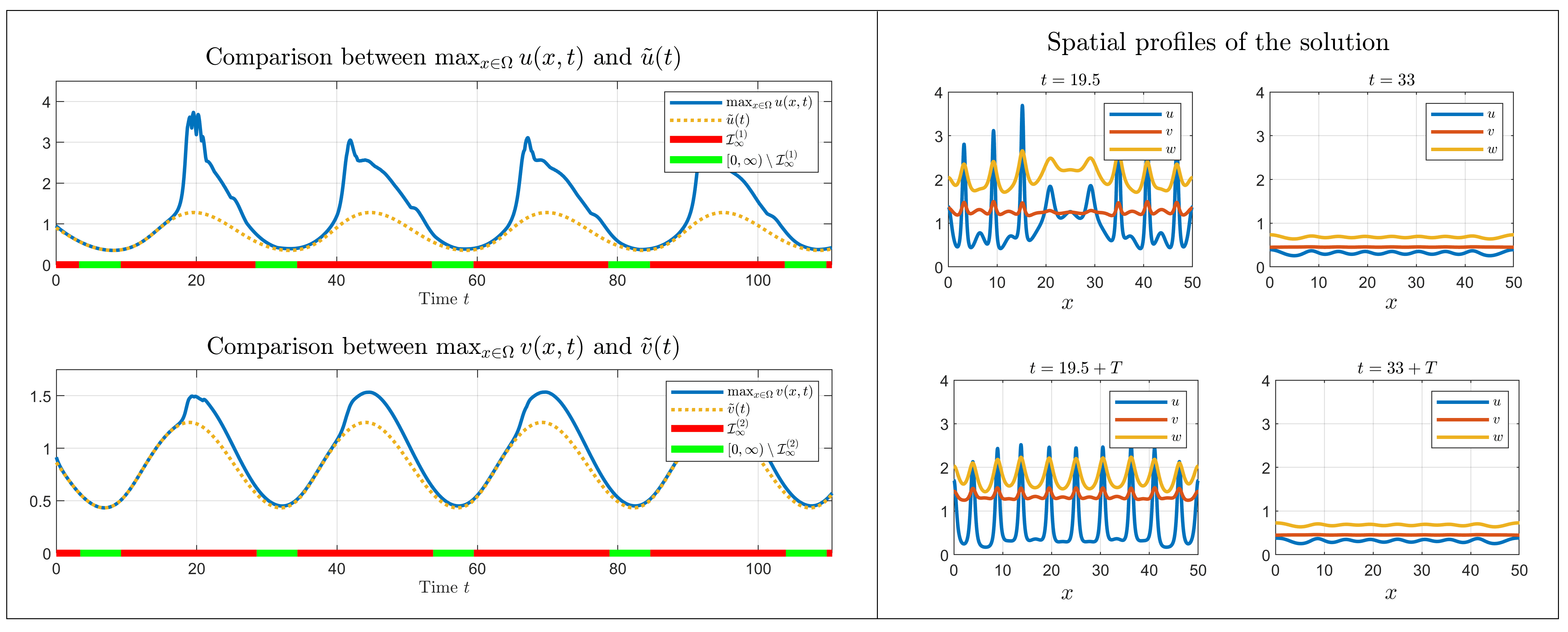}
    \caption{Left panel: evolution of $\max u(\cdot,t)$ and  $\max v(\cdot,t)$ aside the periodic coexistence state $(\tilde{u}, \tilde{v})$. Right panel: spatial profiles of the solution $(u,v,w)$ at $t = 19.5$, $t = 33$ and in the next environmental cycle. Computed with parameters \eqref{5.1}, initial data \eqref{5.3}, and $\chi_1 = 6$.}
    \label{fig:f5}
\end{figure}\\
During the initial instants of the simulation, although both components lie within their respective instability sets, spatial heterogeneity is minimal, with both maximum plots presenting no deviations from the state $(\tilde{u}, \tilde{v})$. This is followed by a first non-instability period, represented in green, which extends approximately until $t = 10$ (both intervals ending precisely at $9.219$, as computed in \eqref{5-i2}), where both solutions still remain close to the spatially homogeneous state. It is during the second instability period, when spatial heterogeneity starts to develop. Due to the strong taxis effect, as previously discussed, the destabilizing effect is greater for $u$, resulting in patterns that reach a maximum population density above $3.5$, approximately at $t = 19.5$. In contrast, the deviations of $v$ remain substantially scarcer, with its maximum density reaching only about $1.5$.
\\\\
To further analyze the shape of the emerging patterns, the spatial profiles of $u$, $v$ and $w$ are represented on the right panel of Figure \ref{fig:f5}. The first plot corresponds precisely to $t = 19.5$, the time at which the largest deviations of $u$ from $\tilde{u}$ were observed. As shown by the blue curve, several symmetric spike-like aggregates emerge, with the largest population densities nearly reaching $4$, close to $x = 15$ and $x = 35$. The corresponding profiles of $v$ and $w$, respectively represented in red and yellow, show substantially less spatial variation, especially in the case of $v$. The self-production of the chemoattractant by the two species results in similar high concentrations of $w$ located at the maxima of $u$, although significantly damped due to the comparatively homogeneous distribution of $v$. 
\\\\
After some time, as the periods of instability reach an end, the previously formed aggregates begin to dissipate and the maximum values gradually decrease, returning towards the spatially homogeneous profile $(\tilde{u}(t), \tilde{v}(t))$, as illustrated again in the left panel of Figure \ref{fig:f5}. During this subsequent stable regime, as already inferred from the plots in Figure \ref{fig:f4}, spatial heterogeneities decay. 
This is further confirmed by the spatial profiles at $t = 33$ (second plot on the right panel), where only small oscillations remain, negligible compared to the structures observed at $t = 19.5$.
\\\\
This alternation between growth and decay persists over the subsequent environmental cycles: instability phases generate pattern formation within the sets $\mathcal{I}_\infty^{(1)}$ and $\mathcal{I}_\infty^{(2)}$, while stability phases, corresponding to their complements, drive the solution back toward $(\tilde{u}, \tilde{v})$.
\\\\
One important dynamical feature is observed during the first environmental cycle. As previously noted, the first pattern formation process is slightly delayed with respect to the others, not reaching an aggregation stage until nearly $t = 20$, resulting in a longer initial stable stage, as opposed to the subsequent ones. This is most likely due to the fact that the first instability intervals are relatively short, ending at $t \approx 3.283$ for $u$ and $t \approx 3.412$ for $v$, and are followed by a prolonged non-instability phase, spanning until $t \approx 9.219$ in both cases. Combined with small initial perturbations, this prevents the early amplification of spatial heterogeneities, which only begin to develop near $t = 20$. As a result, the patterns that emerge on the first environmental cycle differ from those observed in later cycles.
\\\\
This effect is visible in both panels on Figure \ref{fig:f5}. On the left-hand side, the first significant deviation from $(\tilde{u}, \tilde{v})$ occurs approximately after $t = 18$ and its shape is not reproduced in subsequent cycles, which instead display more regular characteristics. This is also noticeable in the spatial profiles, on the right panel. In particular, the profiles at $t = 19.5 + T \approx 44.633$ are more evenly distributed than those at $t = 19.5$, and exhibit a reduced maximal amplitude, remaining below $3$, with the peak occurring earlier at approximately $t \approx 41.82$, as visible on the left panel.
\\\\
To investigate the abrupt changes observed in the graph of $\max u(\cdot,t)$ in Figure \ref{fig:f5} during the first growth phase,
we represent additional spatial profiles of $u$ near $t=20$ in the top row of Figure \ref{fig:f6}. Rapid changes in the bacterial density are observed, especially in the central region of the domain. In particular, after the profile at $t=19.5$, previously shown in Figure \ref{fig:f5}, the two central aggregates merge into a single cluster by $t=20$. At this stage, the maximum density has fallen, and is now close to $3$. Shortly afterwards, at $t=20.2$, the two neighboring aggregates experience a rapid increase in density, before partially dissipating again by $t=20.75$, due to the increase in the central cluster. These local reorganizations explain the non-smooth behavior observed in the evolution of $\max u(\cdot,t)$ during this interval.\\
\begin{figure}[htp]
    \centering
    \includegraphics[width=15 cm]{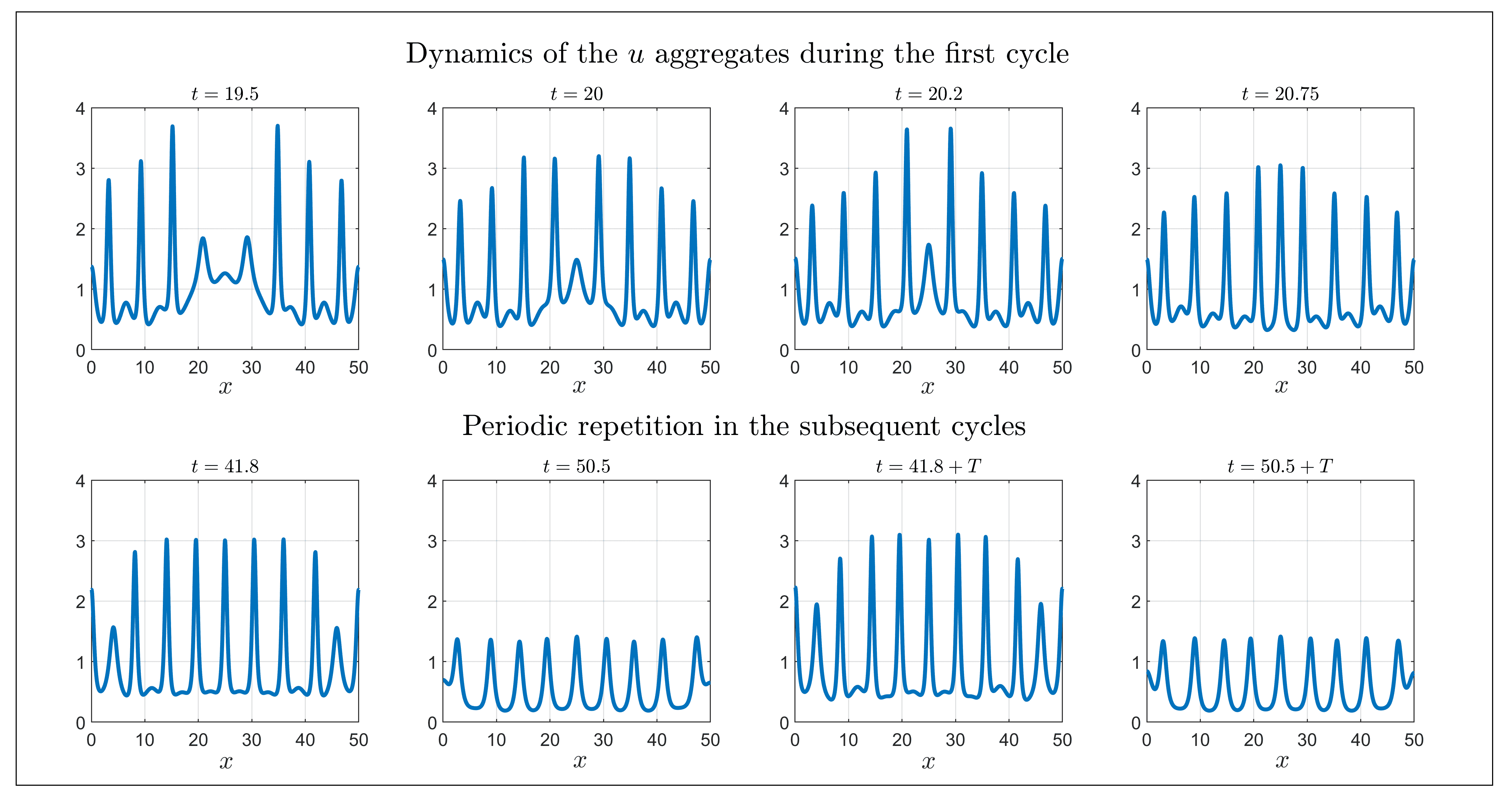}
    \caption{Top row: different spatial profiles of the $u$ component of the solution near $t = 20$. Bottom row: further spatial profiles one cycle apart.Computed with parameters \eqref{5.1}, initial data \eqref{5.3}, and $\chi_1 = 6$.}
    \label{fig:f6}
\end{figure}\\
To further analyze the recurrent dynamics observed in later environmental cycles, the bottom row of Figure \ref{fig:f6} shows the spatial profiles of $u$ at $t = 41.8$, corresponding to the largest value attained by $\max u(\cdot,t)$ during the second cycle, and at $t=50.5$, during the subsequent decay phase, together with the corresponding profiles one period later, namely at $t=41.8+T \approx 66.933$ and $t=50.5+T \approx 75.632$. The profiles obtained in both cycles are remarkably similar, in contrast to the behavior observed during the first cycle, shown in Figure \ref{fig:f5}.
\\\\
This suggests that, in this coexistence setting, after the initial transient dynamics, the pattern formation process may become periodic in time, synchronized with the environmental periodicity. Successive instability intervals give rise to highly structured aggregations, which are followed by stable phases during which spatial heterogeneity is substantially reduced and the dynamics of the system are driven by those of the non-diffusive and non-chemotactic ODE system \eqref{0.9}.
\\\\
We next turn to study the behavior of the system under non-synchronous carrying capacities, to investigate the changes produced in the instability sets.

\subsubsection{Example II: Out-of-phase carrying capacities}\label{s4e2}

To study the dynamics of system \eqref{0.8} under non-synchronous carrying capacities, we next consider the following parameters
\begin{equation}\label{5.4}
\begin{gathered}
 a_1 = 0.3, \quad a_2 = 0.15, \quad  \mu_1 = 1, \quad \mu_2 = 2.5
\quad \alpha = 1, \quad \beta = 1,\\[1.5ex]
\chi_2 = 2, \quad 1+f_1(t) = 1-0.3\sin(0.25t),
\quad 1+f_2(t) = 1+0.3\sin(0.25t)\,,
\end{gathered}
\end{equation}
where the carrying capacities $1+f_1(t)$ and $1+f_2(t)$ are out of phase, and the maxima of one coincide with the minima of the other. In the previous biological interpretation, one can think of two distinct phenotypes, adapted to opposite seasonal conditions. The purpose of this example is to analyze the location of each instability set and, as previously anticipated ---and contrary to what Figure \ref{fig:f3} could suggest--- conclude that these need not coincide with the maxima of each of the carrying capacities. 
\\\\
As in Example I, it is direct to check that the parameter set \eqref{5.4} satisfies conditions \eqref{1.5}, ensuring the existence of a unique, globally attracting, coexistence solution $(\tilde{u}, \tilde{v})$ to system \eqref{0.9}. Moreover, through a numerical integration, one can check that for such state $(\tilde{u}, \tilde{v})$, hypothesis \eqref{0-hip2} does also hold, granting the unique solvability of equations \eqref{4-eq} for all $t \in [0,T]$. By computing the solutions $\chi_1^{(k)}(t)$ and $\widehat{\chi}_1^{\, (k)}(t)$, we obtain that in this case
$$
\chi_{\text{crit}} =\widehat{\chi}_c^{\, (17)} \approx 3.2116
$$
This time, we select for instance $\chi_1 = 5 > \chi_{\text{crit}}$, which yields the following instability sets
\begin{equation}\label{5.5}
    \mathcal{I}_1 = [0, 3.536) \cup (10.229, 8\pi], \quad \mathcal{I}_2 = [0, 3.915) \cup (10.861,8\pi],
\end{equation}
with very similar values. It therefore follows that
\begin{equation}\label{5.6}
    \mathcal{I} = \mathcal{I}_1 \cup \mathcal{I}_2 = [0,  3.915)\cup (10.229, 8\pi].
\end{equation}
The extended instability sets $\mathcal{I}_\infty^{(1)}$ and $\mathcal{I}_\infty^{(2)}$, obtained through \eqref{5.5}, are represented in Figure \ref{fig:f7}, together with the carrying capacities $1+f_1(t)$ and $1+f_2(t)$, as well as the periodic coexistence state $(\tilde{u}, \tilde{v})$.\\
\begin{figure}[htp]
    \centering
    \includegraphics[width=14 cm]{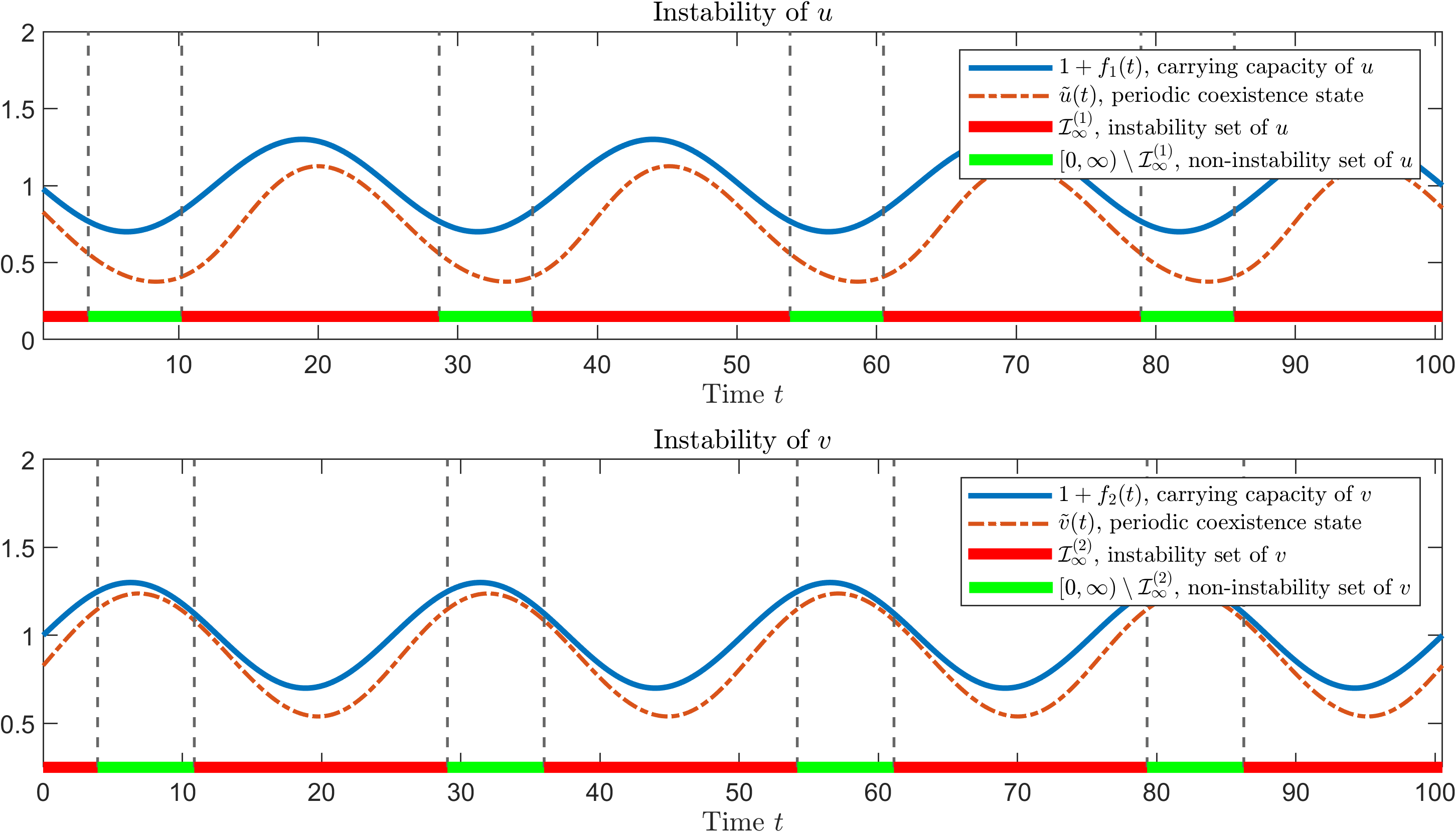}
    \caption{Location of the extended instability sets $\mathcal{I}_\infty^{(1)}$ and $\mathcal{I}_\infty^{(2)}$ and their complements for Example II, with parameters \eqref{5.4} and $\chi_1 = 5 > \chi_{\text{crit}}$. The carrying capacities $1+f_i(t)$ and the periodic coexistence state $(\tilde{u}, \tilde{v})$ are also included for reference.}
    \label{fig:f7}
\end{figure}\\
As it can be seen, and as indicated by \eqref{5.5}, the instability sets for $u$ and $v$ are very similar in terms of their temporal location. However, this contrasts with the out-of-phase behavior exhibited by $\tilde{u}$ and $\tilde{v}$, as a result of the non-synchronized carrying capacities $1+f_1$ and $1+f_2$. In particular, on the top panel, the complement set $[0,\infty)\setminus\mathcal{I}_\infty^{(1)}$, represented in green, is concentrated around the minima of the carrying capacity $1+f_1(t)$. Consequently, linear stability may occur only under relatively unfavorable environmental conditions for the first species, while instability arises when the carrying capacity is large enough. In contrast, the bottom panel reveals a markedly different situation, as the green regions, now corresponding to $[0,\infty)\setminus\mathcal{I}_\infty^{(2)}$, are located near the maxima of $1+f_2(t)$, where environmental conditions are most favorable for the second species.
\\\\
Hence, as previously anticipated, instability is not necessarily associated with intervals in which the carrying capacities attain sufficiently large values, promoting population growth. In fact, the dynamics of $v$ in this example reveal the complete opposite. Indeed, the location of these instability sets is not determined exclusively by ecological favorability, and instead follows from a more intricate mechanism rooted in the geometry of $\chi_1^{(k)}$ and $\widehat{\chi}_1^{(k)}$, as will be further discussed in Section \ref{in-set}. 
\\\\
To conclude this example, using initial data \eqref{5.3} ---the same values used in Example I--- we numerically solve system \eqref{0.8} with parameters \eqref{5.4} and $\chi_1 = 5 > \chi_{\text{crit}}$. This results in the plots shown in Figure \ref{fig:f8}, where, as in Figure \ref{fig:f4}, the comparative between $(u,v,w)$ and $(\tilde{u}, \tilde{v}, \tilde{w})$ is represented.
\begin{figure}[htp]
    \centering
    \includegraphics[width=17 cm]{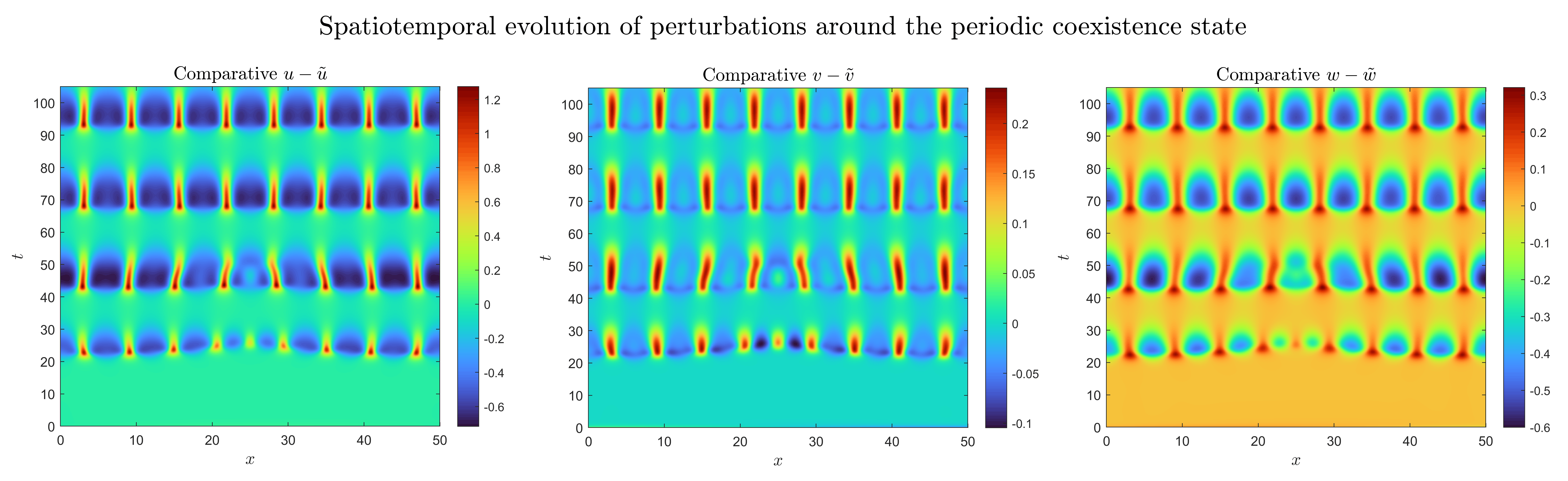}
    \caption{Comparison $u - \tilde{u}$, $v - \tilde{v}$, and $w - \tilde{w}$, where $(u,v,w)$ is the solution to system \eqref{0.8} with parameters \eqref{5.4}, $\chi_1 = 5 > \chi_{\text{crit}}$ and initial data \eqref{5.3}; $(\tilde{u}, \tilde{v})$ is the unique periodic coexistence state of system \eqref{0.9}; and $\tilde{w}(t) := \alpha \tilde{u}(t) + \beta \tilde{v}(t)$.}
    \label{fig:f8}
\end{figure}\\
As in Figure \ref{fig:f4}, we find that both species exhibit a cyclic aggregation--homogenization behavior, marked by the instability intervals $\mathcal{I}_\infty^{(1)}$ and $\mathcal{I}_\infty^{(2)}$. After an initial regime in which the solution remains close to the spatially homogeneous state, the first aggregates emerge during a relatively short time interval, approximately between $t = 20$ and $t = 30$. This is followed by recurrent stages of spatial homogenization and aggregation. In contrast with the spatial patterns obtained in Figure \ref{fig:f4}, the structures that arise in this case remain comparatively stationary after the first two environmental cycles, exhibiting only minor temporal variations before rapidly dissipating near the end of each instability interval.

\subsection{Competitive exclusion}

After the previous two examples, in which the ODE system \eqref{0.9} admitted a unique positive and globally attracting coexistence state as a result of \eqref{1.5}, we now investigate the competitive exclusion scenario. We select a parameter set for which \eqref{1.6} holds, and consequently the unique global attractor of system \eqref{0.9} is the semitrivial state $\big(\tilde{u}(t), 0\big)$, where $\tilde{u}$ is the unique positive and $T-$periodic solution to the nonautonomous logistic equation
$$
u' = \mu_1 u\big(1+f_1(t) -u\big), \quad t>0.
$$
\subsubsection{Example III: Semitrivial periodic state}
In order to compare the results with those obtained in the coexistence scenario, we select the same domain $\Omega = (0,50)$ and parameters from Example I, except for $a_2$, which we suitably modify so that \eqref{1.6} is satisfied. In this way, we consider
\begin{equation}\label{5.7}
    \begin{gathered}
a_1 = 0.25, \quad a_2 = 4, \quad \mu_1 = 1, \quad \mu_2 = 2.5,
\quad \alpha = 1, \quad \beta = 0.8,\\[1.5ex]
\chi_2 = 1.5, \quad 1+f_1(t) = 1-0.6\sin(0.25t),
\quad 1+f_2(t) = 1-0.5\sin(0.25t)\,.
\end{gathered}
\end{equation}
This time, the very high competition rate $a_2 = 4$ implies that
$$
1+f_2^M = 1.5 \leq 4 \cdot 0.4 = a_2(1+f_1^L),
$$
and as a result system \eqref{0.9} cannot sustain a population of the second species that does not eventually become extinct. Computing $\tilde{u}$, one can verify that the solvability assumption \eqref{0-hip2} holds for $\tilde{v} \equiv 0$.
\\\\
We recall that, as asserted by Lemma \ref{l2-2}, in the competitive exclusion case, $\widehat{\mathcal{Q}}_k(\chi_1,t)>0$ for all $k \in \mathbb{N}^+$, all $t>0$ and all $\chi_1 \in \mathbb{R}$. Consequently,
the instability analysis reduces to the study of $\chi_1^{(k)}(t)$, since there are no thresholds associated with $\widehat{\mathcal{Q}}_k$.
Computing the functions $\chi_1^{(k)}(t)$ for $k \in \{1,\dots, 30\}$, we represent the results in Figure \ref{fig:f9}, only showing the same subset of modes as in Figure \ref{fig:f2}. This time, the value of $\chi_{\text{crit}}$ is given by
$$
\chi_{\text{crit}} = \chi_c^{(18)} \approx 3.202,
$$
which is also represented in Figure \ref{fig:f9} in gray dashed line in the central and right-hand panels. As in Example I, we consider $\chi_1 = 6 > \chi_{\text{crit}}$, depicted in black dashed line.\\
\begin{figure}[htp]
    \centering
    \includegraphics[width=15 cm]{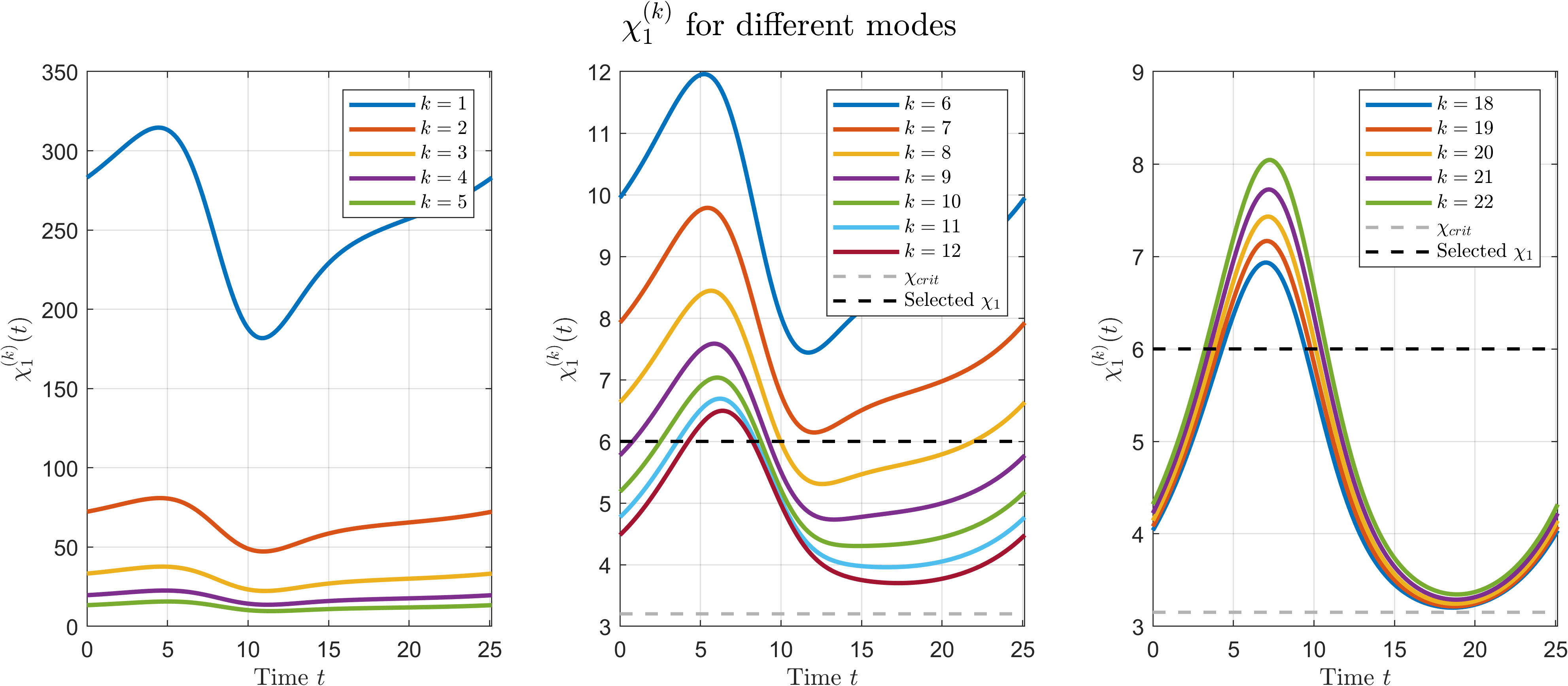}
    \caption{Values of $\chi_1^{(k)}(t)$ for different modes $k$, computed using the parameter set \eqref{5.7}. The value of $\chi_{\text{crit}} \approx 3.202$ and $\chi_1 = 6 > \chi_{\text{crit}}$ are also represented.}
    \label{fig:f9}
\end{figure}\\
Comparing the plots of $\chi_1^{(k)}(t)$ with those on the top panel of Figure \ref{fig:f2}, corresponding to the periodic coexistence case, we observe a very similar qualitative behavior. In particular, for each fixed mode $k$, the temporal profiles have essentially the same shape, with the main difference however lying in their magnitude. Throughout the range of modes considered, the values of $\chi_1^{(k)}(t)$ are consistently lower in the competitive exclusion case. For example, the maximum of $\chi_1^{(1)}(t)$ in Figure \ref{fig:f9} is below $350$, compared with approximately $375$ in the coexistence case from Figure \ref{fig:f2}, while for $k=6$, in Figure \ref{fig:f9} we find $\chi_1^{(6)}(t)$ uniformly below $12$, whereas it attains values close to $14$ in Figure \ref{fig:f2}. This difference is due to the combination of contribution of $\tilde{v}$ in the coexistence case, which vanishes in the current competitive exclusion scenario, and the effect of the higher value of $a_2$.
\\\\
This further influences the critical threshold for instability $\chi_{\text{crit}}$, given in this case by $3.202$, which is slightly less than $3.287$, obtained for the coexistence case. Consequently, in the present example, where the two parameter sets differ only in the value of $a_2$, this indicates that increasing the competitive effect of the first species on the second lowers the chemotactic sensitivity required for transient instability.
\\\\
With the chosen value of $\chi_1 = 6 > \chi_{\text{crit}}$, the instability set is given by
\begin{equation}\label{5.8}
    \mathcal{I} = \mathcal{I}_1 = [0, 4.925) \cup (8.083, 25.132].
\end{equation}
As in the previous examples, we represent the extended instability set $\mathcal{I}_\infty^{(1)}$, together with the carrying capacity $1+f_1(t)$ and the periodic component $\tilde{u}(t)$ of the unique global attractor to system \eqref{0.9} in Figure \ref{fig:f10}. The corresponding quantities associated with the second species are omitted, since in the case of competitive exclusion, the sufficient instability criterion does not detect instability of the trivial component.\\
\begin{figure}[htp]
    \centering
    \includegraphics[width=14 cm]{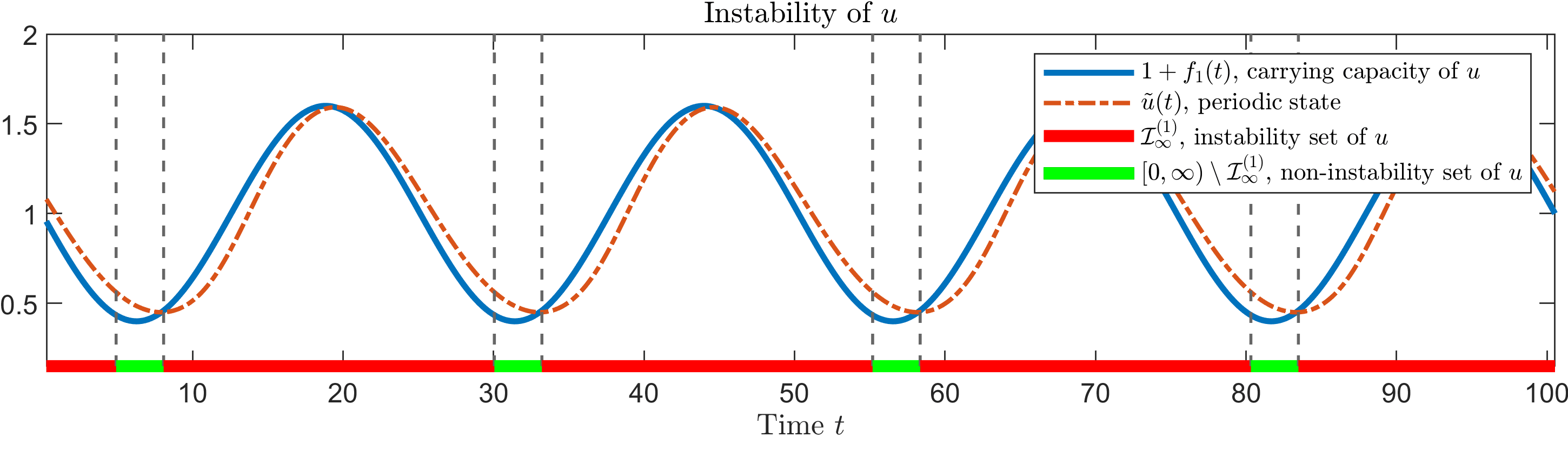}
    \caption{Location of the instability set $\mathcal{I}_\infty^{(1)}$ and its complement, together with $\tilde{u}$, the nonzero periodic component of the semitrivial state, and $1+f_1(t)$, the carrying capacity of the first species, computed with parameters \eqref{5.7}.}
    \label{fig:f10}
\end{figure}\\
This time, as Figure \ref{fig:f3} showed for Example I, the only possible regimes of stability are located in a neighborhood of the minima of the carrying capacity of $1+f_1$, while for sufficiently large carrying capacities, instability holds.
\\\\
Lastly, using this value of $\chi_1 = 6 > \chi_{\text{crit}}$ and parameters \eqref{5.7}, we solve the complete PDE system \eqref{0.8} with initial data
\begin{equation}\label{5.9}
    u(x,0) = \tilde{u}(0) + \varepsilon \sin (\pi x/50), \quad v(x,0) = \tilde{v}(0) + \varepsilon \big| \hspace{-0.05 cm}\cos (\pi x/50)\big| = \varepsilon \big|\hspace{-0.05 cm}\cos (\pi x/50)\big|,
\end{equation}
for $\varepsilon = 0.05$. We remark that these are the same initial data considered in \eqref{5.4}, except for the fact that the perturbation in $ v(x,0)$ is considered in absolute value to preserve the nonnegativity of the solution. The evolution plots are represented in Figure \ref{fig:f11}, showing the comparative between $(u,v,w)$ and $(\tilde{u}, \tilde{v}, \tilde{w})$.\\
\begin{figure}[htp]
    \centering
    \includegraphics[width=17 cm]{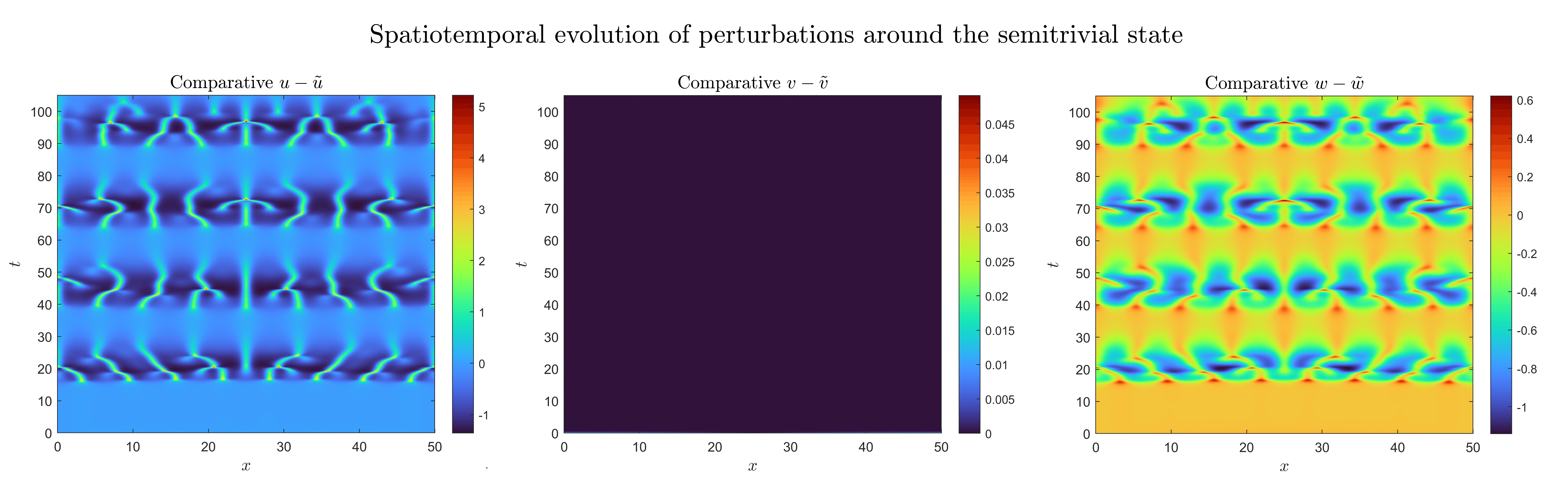}
    \caption{Comparison $u - \tilde{u}$, $v - \tilde{v}$, and $w - \tilde{w}$, where $(u,v,w)$ is the solution to system \eqref{0.8} with parameters \eqref{5.7}, $\chi_1 = 6 > \chi_{\text{crit}}$ and initial data \eqref{5.9}; $(\tilde{u}, \tilde{v}) = (\tilde{u},0)$ is the unique global attractor of system \eqref{0.9}; and $\tilde{w}(t) := \alpha \tilde{u}(t) + \beta \tilde{v}(t) = \alpha \tilde{u}(t)$.}
    \label{fig:f11}
\end{figure}\\
The most substantial difference with respect to the previous examples is observed in the $v$ component of the solution. The initial perturbation rapidly decays, as a result of the strong competitive effect exerted by the first species, and consequently the center panel shows essentially no discrepancies between $v$ and $\tilde{v}$. We recall that, in the competitive exclusion case, Lemma \ref{l2-2} ensures the positivity of $\widehat{\mathcal{Q}}_k$ for all $k \in \mathbb{N}$, preventing the application of the sufficient instability criterion from Lemma \ref{l1-aux} to the modal amplitudes of the perturbations of $v$. Although this does not rule out the possibility of instability, for the present parameter set, perturbations of the excluded species decay rapidly, so that competitive exclusion is maintained despite the transient instability exhibited by the first species.
\\\\
With respect to $u$ and $w$, transient aggregation patterns once again develop, illustrating the cyclic behavior, alternating heterogeneous aggregations phases ---occurring during the extended instability set--- with periods of spatial homogeneity, where perturbations decay. Despite their overall symmetry, the resulting patterns are less regular than those observed in Figures \ref{fig:f4} and \ref{fig:f8}, exhibiting thinner aggregates that attain considerably higher population densities. In particular, the density of the first species exceeds 5, substantially above the values obtained in Example I, despite the same parameters (and particularly the same chemotactic sensitivity) being considered for both $u$ and $w$. This difference can be attributed to the competitive exclusion of the second species, which removes the interspecific competition acting on the first one and allows it to concentrate more strongly within the aggregation regions.

\section{Further remarks}\label{s6}
Lastly, we collect here some final observations concerning the results from the previous examples.

\subsection{On the location of the instability sets}\label{in-set}

Despite what Figure \ref{fig:f3} in Example I might suggest, and as illustrated by Figure \ref{fig:f7} in Example II, the location of the instability sets is not directly tied to the times at which the carrying capacities attain their maxima. Instead, for a fixed value of $\chi_1 > \chi_{\text{crit}}$ the instability sets $\mathcal{I}_1$ and $\mathcal{I}_2$ are fundamentally determined by the geometry of the threshold functions $\chi_1^{(k)}$ and $\widehat{\chi}_1^{(k)}$, whose shape ultimately depends on the coefficients $a_k$, $b_k$ and $\widehat{b}_k$ introduced in Lemma \ref{l2}.
\\\\
In particular, one important observation can be made with respect to $a_k$ and the role of hypothesis \eqref{0-hip2}. From \eqref{a-k-eq}, we have that
$$
 a_k(t) = \tilde{u}(t) \frac{ \lambda_k}{1+\lambda_k} \Big[- \alpha\lambda_k + \mu_2  \big(\alpha (1+f_2(t)) - G(t)\big) \Big ],
$$
where 
$$
G(t) :=  \alpha a_2  \tilde{u}(t)  + \left(2\alpha -a_2\beta \right) \tilde{v}(t) 
$$
As previously discussed, assumption \eqref{0-hip2} implies that $\alpha(1+f_2(t)) - G(t) <0$ for all $t \in [0,T]$, ensuring that $a_k(t)$ remains uniformly negative over $[0,T]$. Furthermore, for each fixed mode $k \in \mathbb{N}^+$, the temporal profile of $a_k$ ---and consequently those of $\chi_1^{(k)}$ and $\widehat{\chi}_1^{(k)}$--- is largely dependent on that of $G(t)$. Coefficients $\alpha a_2$ and $2\alpha -a_2 \beta$ can therefore be understood as two weights, determining the relative contribution of $\tilde{u}$ and $\tilde{v}$ to $G(t)$
\\\\
In Example I, as the carrying capacities were in phase, the periodic coexistence states $\tilde{u}$ and $\tilde{v}$ inherit an essentially similar behavior, as Figure \ref{fig:f3} shows. Hence, the weighted sum $G(t)$ preserves this common temporal structure, and its extrema are therefore aligned with those of $1+f_2$, as depicted in Figure \ref{fig:f1}. Similarly, since the time dependence of the coefficients $b_k$ and $\widehat{b}_k$ is expressed only through $\tilde{u}$, $\tilde{v}$ and the carrying capacities $f_i$, the threshold functions $\chi_1^{(k)}$ and $\widehat{\chi}_1^{(k)}$, which by \eqref{chis-def} are given by
$$
\chi_1^{(k)}(t) = - \frac{b_k(t)}{a_k(t)}, \quad \widehat{\chi}_1^{(k)} = - \frac{\widehat{b}_k(t)}{a_k(t)},
$$
inherit essentially the same temporal phase. Consequently, the instability sets \eqref{5-i2} were concentrated around the maxima of the periodic states.
\\\\
In Example II, the out-of-phase carrying capacities lead to a markedly different behavior. As shown in Figure \ref{fig:f7}, the instability set of $u$ remains concentrated around the maxima of $\tilde{u}$, whereas that of $v$ is now located near the minima of $\tilde{v}$ . 
This time, for the parameter set \eqref{5.4}, the weighted sum takes the form
$$
G(t) = 0.15 \tilde{u}(t) + 1.85 \tilde{v}(t),
$$
and thus its behavior is dominated by the contribution of $\tilde{v}$. Since the phase shift in the carrying capacity $1+f_2$ reverses the oscillatory behavior of $\tilde{v}$ with respect to Example I, the same phase reversal is inherited by $G(t)$, and consequently by the dominant contribution to the coefficient $a_k$. As a result, the threshold functions attain their smallest values near the minima of $\tilde{v}$, leading to instability intervals concentrated around these times.
\\\\
In this way, $G(t)$ provides a mechanism for controlling the temporal location of the instability intervals. A further important observation is that the geometry of $G$ is not determined only by the weights, but also by the magnitude of the coexistence states $\tilde{u}$ and $\tilde{v}$. In Example II, as both carrying capacities attained the same maximum and minimum values, and competition between both species was mild, the magnitude of both states $\tilde{u}$ and $\tilde{v}$ was similar, as Figure \ref{fig:f7} shows. However, if one species experiences a much stronger competitive pressure or significantly less favorable environmental conditions, its population may remain comparatively small, reducing its influence on $G(t)$, even when its associated weight is relatively large.
\\\\
As a result, by suitably balancing the weights in $G(t)$ and the amplitudes of $\tilde{u}$ and $\tilde{v}$, while simultaneously ensuring that hypothesis \eqref{0-hip2} holds, one can obtain a wider variety of instability distributions. In particular, the extrema of $G(t)$ need not coincide with those of either carrying capacity. To illustrate this behavior, we consider the following parameter values
\begin{equation}\label{6.1}
\begin{gathered}
    a_1 = 0, \quad a_2 = 0.3, \quad \mu_1 = 3, \quad \mu_2 = 0.25, \quad \alpha = 4, \quad \beta = 2.5, \\[1.5 ex]
    \chi_2 = 1.5, \quad 1+f_1(t) = 1.4 - 0.4\sin(0.25t), \quad 1+f_2(t) = 0.7+0.1\sin(0.25t),
\end{gathered}
\end{equation}
again with out-of-phase carrying capacities. In this case, the combination of the absence of interspecific competition exerted by the second species $(a_1 = 0)$ and the fact that $1+f_1(t)$ lies substantially above $1+f_2(t)$ for all times, results in $\tilde{u}$ attaining significantly higher values than $\tilde{v}$, as shown on the top panel of Figure \ref{fig:f12}. Consequently, although here $G(t) = 1.2 \tilde{u}(t)+ 7.25 \tilde{v}(t)$, the larger magnitude of $\tilde{u}$ together with the comparatively small amplitude of the oscillations of $\tilde{v}$ cause the phase of $G(t)$ to lie closer to that of $1+f_1(t)$ than of $1+f_2(t)$. This is represented in the bottom panel of Figure \ref{fig:f12}, where $G(t)$ is plotted in blue, together with $\alpha(1+f_2(t))$ in red. As it can be seen, this shift of phase in $G(t)$ makes the inequality in hypothesis \eqref{0-hip2} tighter. \\
\begin{figure}[htp]
    \centering
    \includegraphics[width=14cm]{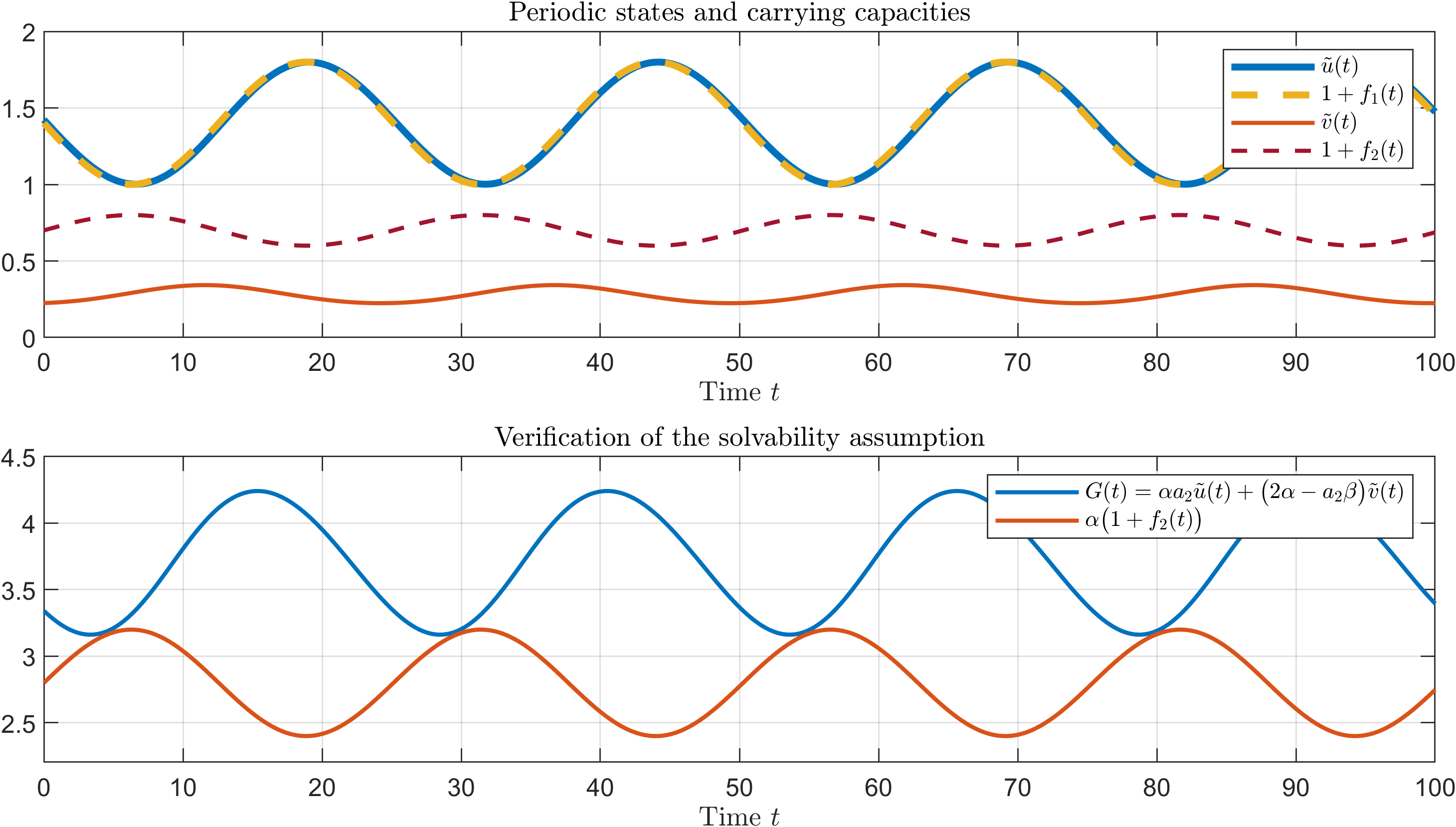}
    \caption{Top panel: periodic coexistence states $(\tilde{u},\tilde{v})$ and their carrying capacities $1+f_1(t)$ and $1+f_2(t)$ for parameter set \eqref{6.1}. Bottom panel: numerical verification of assumption \eqref{0-hip2} showing $G(t)$ and $\alpha (1+f_2(t))$.}
    \label{fig:f12}
\end{figure}
\\
Parameter values \eqref{6.1} yield $\chi_{\text{crit}} =  0.588$. Selecting for instance $\chi_1 =1.05 > \chi_{\text{crit}}$, produces the instability intervals depicted in Figure \ref{fig:f13}. As it can be seen, the extended instability intervals $\mathcal{I}_\infty^{(1)}$ and $\mathcal{I}_\infty^{(2)}$ now contain both, the maxima and minima of the carrying capacities. Moreover, the complement sets, where linear stability may occur, correspond to values of the carrying capacities for which instability is also observed at other times within the same environmental cycle. Hence, identical values of the carrying capacities may be associated with either instability or possible stability, depending on the stage of the environmental cycle at which they are attained.
\\
\begin{figure}
    \centering
    \includegraphics[width=14 cm]{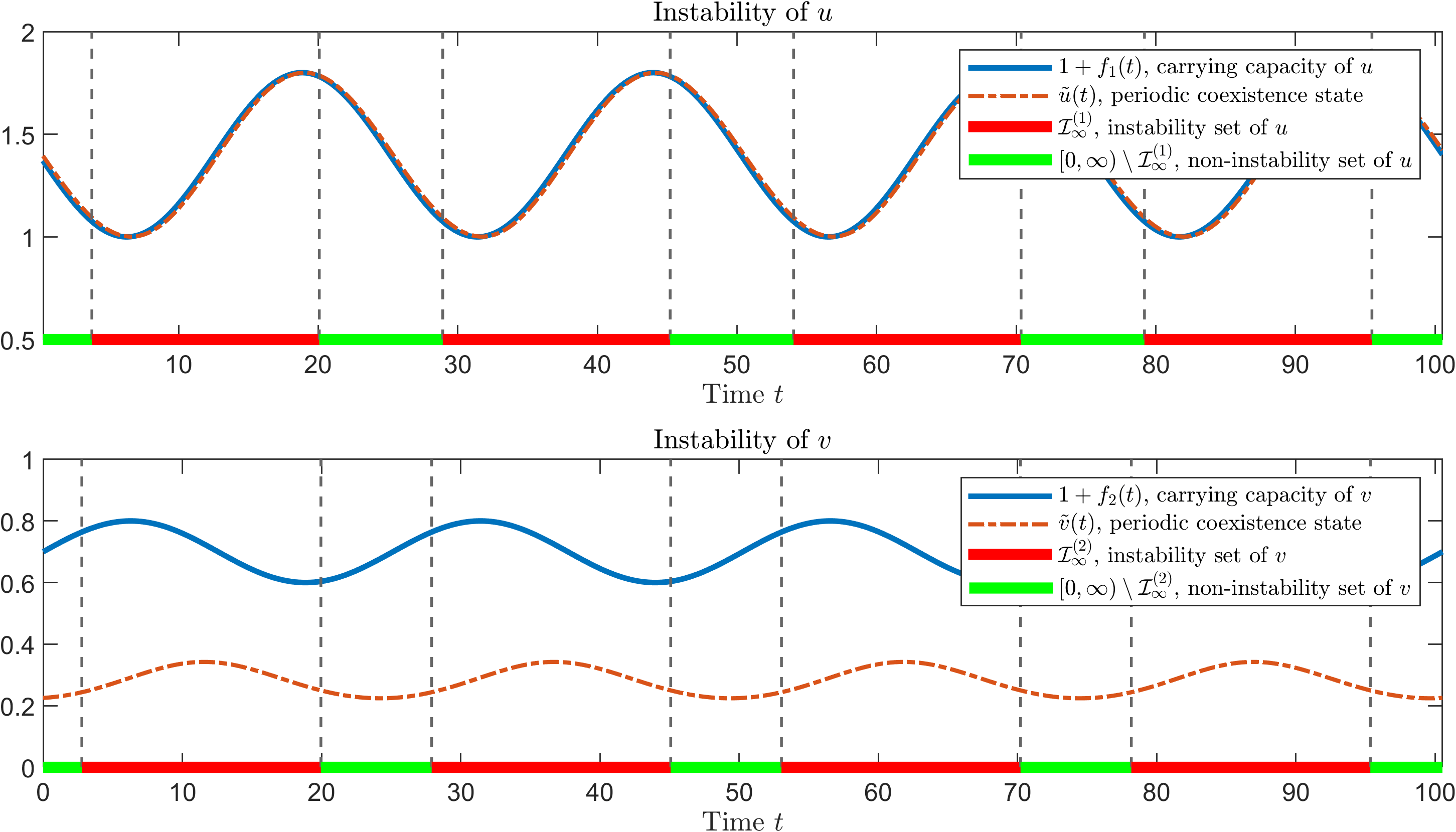}
    \caption{Location of the instability sets $\mathcal{I}_\infty^{(1)}$ and $\mathcal{I}_\infty^{(1)}$ and their complements for parameter values \eqref{6.1}.}
    \label{fig:f13}
\end{figure}
\\
This shows that instability is not determined by the instantaneous value of the carrying capacities alone, but also by the temporal interaction between the environmental forcing and the periodic coexistence state, mediated through the weighted function $G(t)$.

\subsection{Negative instability thresholds}\label{neg-chi}

Lastly, we devote a few lines to discuss the possibility of negative instability thresholds. Despite all the previous examples yielding positive values of $\chi_{\text{crit}}$, such positivity is not guaranteed for arbitrary parameter values. 
\\\\
For the threshold functions $\chi_1^{(k)}$ and $\widehat{\chi}_1^{(k)}$ defined in \eqref{chis-def}, hypothesis \eqref{0-hip2} only ensures that $a_k(t) < 0$ for all $t \in [0,T]$. However, the signs of  $\chi_1^{(k)}$ and $\widehat{\chi}_1^{(k)}$ ---and consequently that of $\chi_{\text{crit}}$--- depend naturally on the coefficients $b_k$ and $\widehat{b}_k$. Without additional assumptions, these coefficients need not remain positive throughout $[0,T]$.
\\\\
In particular, if there exists $k \in \mathbb{N}^+$ such that $b_k(t)$ or $\widehat{b}_k(t)$ become negative for some $t \in [0,T]$, then $\chi_{\text{crit}} <0$. Consequently, heterogeneous aggregations may arise when the first species is non-chemotactic, or even when it undergoes repulsive taxis, which is generally a further stabilizing mechanism. In such situations, the instability can be understood as being driven by the attractive chemotactic response of the second species, whose influence is sufficient to destabilize the homogeneous state despite the absence of attracting taxis in the first species.
\\\\
As an example, we consider the following parameters
\begin{equation}\label{6.2}
    \begin{gathered}
        a_1 = 0.4, \quad a_2 = 0.4, \quad \mu_1 = 3, \quad \mu_2 = 0.7, \quad \alpha = 5, \quad \beta = 7, \\[1.5 ex]
        \chi_2 = 1.5, \quad 1+f_1(t) = 3-0.5\sin(0.25t), \quad 1+f_2(t) = 1-0.5\cos(0.25t).
    \end{gathered}
\end{equation}
The left panel on Figure \ref{fig:f14} shows the plots of $\chi_1^{(k)}(t)$ and $\widehat{\chi}_1^{(k)}(t)$, for $k \in \{5, \dots, 10\}$. The values of $\chi_1^{(k)}$ are plotted in solid lines, while  $\widehat{\chi}_1^{(k)}$ are represented in dashed lines. As it can be seen, for each $k$, the values on both graphs are very similar, in particular attaining negative values. This results in the negative threshold
$$
\chi_{\text{crit}} = \widehat{\chi}_1^{(5)}(5.05)= -9.72 < 0.
$$
To illustrate the patterns generated for $\chi_1 > \chi_{\text{crit}}$ the right panel of Figure \ref{fig:f14} depicts two representative solution profiles, the first computed for $\chi_1 = -6 <0$, shown on the top row, and the second one for $\chi_1 = 0$, on the bottom row.
\begin{figure}[htp]
    \centering
    \includegraphics[width=16 cm]{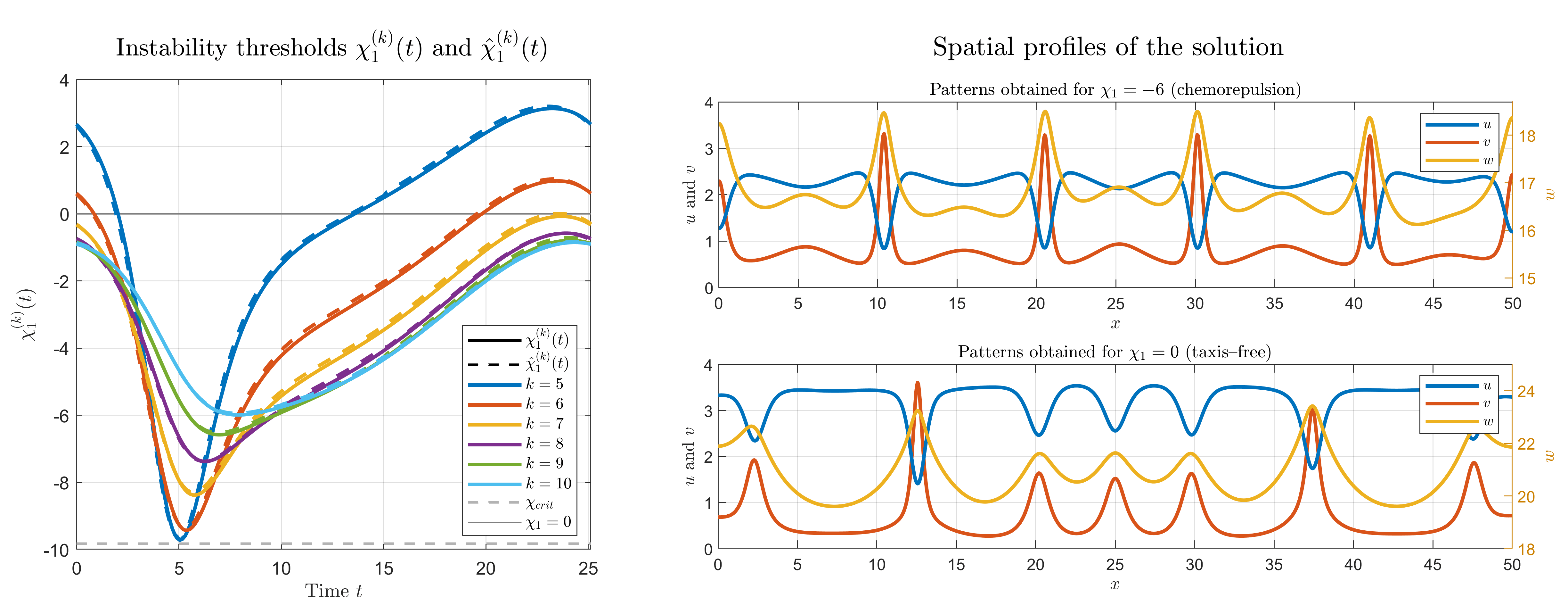}
    \caption{Results obtained for parameter set \eqref{6.2}. Left panel: plots of $\chi_1^{(k)}(t)$ (solid) and $\widehat{\chi}_1^{(k)}(t)$ (dashed lines). Right panel: on the top row, spatial profile of the patterns generated by $\chi_1 = -6 <0$, on the bottom row, patterns generated by $\chi_1 = 0$.}
    \label{fig:f14}
\end{figure}\\
In both cases, we obtain highly structured and nearly symmetric patterns. In the repulsive taxis case, the minima of $u$ coincide with the maxima of $w$, reflecting the chemorepulsive response of the first species. In contrast, these regions of high chemoattractant concentration promote the aggregation of the second species $v$, due to the attraction caused by the positive sensitivity $\chi_2 >0$. 
\\\\
A similar behavior is observed in the taxis--free scenario, where only the second species is chemotactically attracted to
$w$, while the first species evolves solely through diffusion, logistic growth, and interspecific competition. Although $u$ no longer exhibits chemorepulsion, its minima are again located at the maxima of $w$. In this case, however, the mechanism is different. In particular the high concentrations of $w$ attract the second species, leading to aggregations of $v$, which in turn suppress the first species through interspecific competition.
\\\\
We stress, however, that the instability mechanism remains fundamentally chemotactically driven. Indeed, if both sensitivities vanish, that is, $\chi_1 = \chi_2 = 0$, the result in \cite{AL89} rules out the possibility of instability. Consequently, cases with $\chi_{\text{crit}} \leq 0$ should not be interpreted as pattern formation in the absence of chemotaxis. Rather, they reflect that the attractive chemotactic response of the second species alone is already sufficient to destabilize the spatially homogeneous state, even when the first species is non-chemotactic or repelled by the chemoattractant.

\section{Discussion and conclusions}\label{s7}

Throughout this work, we investigated the rise of recurrent aggregations in the two--species chemotaxis--competition system \eqref{0.8}, which features periodic environmental conditions. Our aim was to derive a threshold for $\chi_1$, the chemotactic sensitivity of the first species, that leads to transient linear instability of the spatially homogeneous state $\big(\tilde{u}(t), \tilde{v}(t), \tilde{w}(t)\big)$, obtained through the ODE system \eqref{0.9}. This results in an opposite behavior to that studied in \cite{NV21}, where it was shown that sufficiently large logistic growth rates $\mu_1$ and $\mu_2$ guarantee convergence of all solutions of \eqref{0.8} to $(\tilde{u}, \tilde{v}, \tilde{w})$. In contrast, the present work identifies parameter regimes in which transient linear instability and recurrent aggregation may occur.
\\\\
In this way, our main analytical result is Theorem \ref{t1}, which provides a way to compute the critical threshold $\chi_{\text{crit}}$. In particular, for any $\chi_1 > \chi_{\text{crit}}$, there exists a set of times $\mathcal{I} \subset [0,T]$, over which the spatially homogeneous state $(\tilde{u}, \tilde{v}, \tilde{w})$ becomes linearly unstable.
\\\\
Particularly interesting dynamics arise whenever $[0,T] \setminus \mathcal{I}$ is not empty, as due to the environmental periodicity, the instability becomes recurrent over the subsequent environmental cycles. In particular, the examples in Section \ref{s4} showed the alternation of stable and unstable intervals, over which spatial perturbations grow and decay, resulting in transient aggregations. This interplay between homogenization and destabilization is clearly visible in the plots shown in Figures \ref{fig:f4} and \ref{fig:f8}, in the case of periodic coexistence, and Figure \ref{fig:f11} in a competitive exclusion scenario. 
\\\\
Some final remarks were made in Section \ref{s6} concerning the location of the instability sets and the possibility of negative thresholds for $\chi_1$. In particular, the example constructed through parameters \eqref{6.1} showed how instability is not directly tied to instantaneous values of the carrying capacities, and in fact the same value of a carrying capacity may correspond either to stability or to instability, depending on the stage of the environmental cycle. With respect to negative values of $\chi_{\text{crit}}$, parameter set \eqref{6.2} produced the results shown in Figure \ref{fig:f14}, generating patterns in taxis--free and even chemorepulsive scenarios.
\\\\
We emphasize that the instability mechanism considered throughout this work is fundamentally chemotactically driven, as remarked at the end of Section \ref{s5}. While the ecological quantities $1+f_i$, $\tilde{u}$ and $\tilde{v}$ determine the conditions under which instability arises, the underlying destabilizing mechanism remains chemotactic.
\\\\
Finally, we note that the instability criterion developed in this work is based on the sufficient condition from Lemma \ref{l1-aux}. Consequently, although it provides a practical framework for detecting transient instability, it cannot be used to conclude linear stability. Moreover, the present analysis is entirely linear and therefore concerns only the onset of pattern formation. While the examples showed that it successfully predicts the emergence of transient aggregations and their temporal location, it does not provide information on the nonlinear evolution of the patterns, their shape or the interactions between aggregates once instability has developed.

\section*{Acknowledgments}

This work was supported by Project PID2022-141114NB-I00 from the Spanish Ministry of Science and Innovation and by Grant FPU23/03170 from the Spanish Ministry of Science, Innovation and Universities (F.H.-H.)

\end{document}